\newcounter{generalnumbering}   \numberwithin{generalnumbering}{section}
\theoremstyle{plain}    \newtheorem{theorem}[generalnumbering]{Theorem}
\theoremstyle{plain}    \newtheorem{corollary}[generalnumbering]{Corollary}
\theoremstyle{definition}   \newtheorem{definition}[generalnumbering]{Definition}
\theoremstyle{definition}   \newtheorem{example}[generalnumbering]{Example}
\theoremstyle{plain}    \newtheorem{proposition}[generalnumbering]{Proposition}
\theoremstyle{plain}    \newtheorem{lemma}[generalnumbering]{Lemma}
\newcommand{\namefordifferentenvironment}{}
\theoremstyle{plain}    \newtheorem{plainstyle}[generalnumbering]{\namefordifferentenvironment}
\theoremstyle{plain}    \newtheorem*{plainstyle*}{\namefordifferentenvironment}
\theoremstyle{definition}    \newtheorem{definitionstyle}[generalnumbering]{\namefordifferentenvironment}
\theoremstyle{definition}    \newtheorem*{definitionstyle*}{\namefordifferentenvironment}
\newenvironment{penv*}[1]{\renewcommand{\namefordifferentenvironment}{#1}\begin{plainstyle*}}{\end{plainstyle*}}
\newenvironment{denv*}[1]{\renewcommand{\namefordifferentenvironment}{#1}\begin{definitionstyle*}}{\end{definitionstyle*}}
\newenvironment{remark}{\begin{denv*}{Remark}}{\end{denv*}}
\newcommand{\ntag}{\tag{\thegeneralnumbering}\stepcounter{generalnumbering}}
\DeclareMathOperator{\so}{\mathfrak{s}}
\DeclareMathOperator{\ra}{\mathfrak{r}}
\DeclareMathOperator{\supp}{supp}
\DeclareMathOperator{\id}{id}
\DeclareMathOperator{\dom}{dom}
\DeclareMathOperator{\ran}{ran}
\newcommand*{\defeq}{\mathrel{\vcenter{\baselineskip0.5ex \lineskiplimit0pt\hbox{\scriptsize.}\hbox{\scriptsize.}}}=}
\title{Sectional algebras of semigroupoid  bundles}
\author{Luiz Gustavo Cordeiro
}
\thanks{The author was supported by the ANR project GAMME (ANR-14-CE25-0004)}
\address{UMPA, UMR 5669 CNRS -- École Normale Supérieure de Lyon\\
46 allée d'Italie, 69364 Lyon Cedex 07, France}
\email{luizgc6@gmail.com, luis-gustavo.cordeiro@ens-lyon.fr}				
\subjclass[2010]
{Primary 16S60; 
Secondary 18B40, 
13A02
}
\keywords{semigroupoid; sectional algebra; smash product; tensor product; semidirect product; crossed product; $\land$-preaction; quotient}		
\begin{document}

\begin{abstract}
    In this article we use semigroupoids to describe a notion of algebraic bundles, mostly motivated by Fell ($C^*$-algebraic) bundles, and the sectional algebras associated to them. As the main motivational example, Steinberg algebras may be regarded as the sectional algebras of trivial (direct product) bundles. Several theorems which relate geometric and algebraic constructions -- via the construction of a sectional algebra -- are widely generalized: Direct products bundles by semigroupoids correspond to tensor products of algebras; Semidirect products of bundles correspond to ``naïve'' crossed products of algebras; Skew products of graded bundles correspond to smash products of graded algebras; Quotient bundles correspond to quotient algebras. Moreover, most of the results hold in the non-Hausdorff setting. In the course of this work, we generalize the definition of smash products to groupoid graded algebras.

    As an application, we prove that whenever $\theta$ is a $\land$-preaction of a discrete inverse semigroupoid $S$ on an ample (possibly non-Hausdorff) groupoid $\mathcal{G}$, the Steinberg algebra of the associated groupoid of germs is naturally isomorphic to a crossed product of the Steinberg algebra of $\mathcal{G}$ by $S$. This is a far-reaching generalization of analogous results which had been proven in particular cases.
\end{abstract}

\maketitle

\section{Introduction}
\subsection{Historical remarks}

Bundles (or fields) of algebras have been thoroughly studied in the last century, and are an instance of the general technique of decomposing a mathematical object into more manageable components. For example, any finite-dimensional $C^*$-algebra $A$ may be decomposed as a direct sum $A=\oplus_{x\in X} A_x$, where $X$ is a finite set called the \emph{base space} and each $A_x$ is a full matrix algebra. In other words, $A$ is isomorphic to an algebra of block diagonal matrices.

However, when considering infinite dimensional algebras, such a decomposition is generally not possible in any meaningful way. There are two possibilities to deal with this problem.

In one direction, we may permit that the base space $X$, over which the initial algebra $A$ is decomposed, is infinite, or more specifically a topological space. This leads to the notion of ``continuous bundles of $C^*$-algebras''. These  have their origin in Godement \cite{MR0038571} and Kaplansky \cite{MR0042066} as a generalization of direct sums to a continuous setting, after von Neumann's introduction of measurable fields of Hilbert spaces in \cite{MR0029101}.

In the other direction, we may allow that the base space $X$ itself has some dynamical strucuture (on top of possibly being infinite) -- e.g.\ it is a group -- and the decomposition of $A$ along $X$ respects that structure, i.e., that $A$ is a \emph{graded} algebra.

In \cite{MR0259619}, Fell introduces ``Banach $*$-algebraic bundles'' over topological groups, which are continuous versions of group graded $*$-algebras. The $C^*$-analogues were introduced in \cite{MR936629}, and are now more commonly referred to as ``Fell bundles''.

Finally, both of the approaches above were combined by Kumjian in \cite{MR1443836}, who defined Fell bundles over groupoids.

The goal of this article is to utilize the language of \emph{semigroupoids} to lay out a general framework for the study of bundles of algebras. Whenever possible, we will consider general topological algebras over topological rings satisfying some minimal regularity conditions. Although the finer details become more intricate in this very general setting, it has the obvious advantage of being more widely applicable.

Let us outline the structure of this article. The remainder of the Introduction is devoted to recall some basic facts and terminology, which will be used throughout the paper, about algebras over non-commutative rings, topological spaces, and topological semigroupoids. In Section 2 we define algebraic bundles over semigroupoids and their associated sectional algebras. Section 3 describes several classes of algebras which may be regarded as somewhat trivial cases of sectional bundle algebras, so as to make it precise how to apply our results in those specific settings. The short Section 4 deals with a very natural problem: Are sectional algebras topological algebras in a natural manner? We provide a positive answer in a specific, but nonetheless sufficiently wide, setting. We finish this article with the fifth section, with several isomorphism theorems relating geometric constructions with bundles and algebraic constructions of the associated sectional algebras. The results of this section affirm the strength of this theory in their applications, since we are able to generalize and connect previously-known results of the are by interpreting different theories in this more general setting in very straightforward manners.

\subsection{Algebraic generalities}

All rings and algebras are assumed to be associative, and any module or bimodule $M$ over a unital ring $R$ will be assumed to be unital, i.e., $1m=m$ and/or $m1=m$ for all $m\in M$.

If $A$ is an abelian group, written additivelly, and $\mathscr{F}$ is a collection of subsets of $A$, then $\sum\mathscr{F}$ denotes the abelian subgroup of $A$ generated by $\mathscr{F}$, i.e., the set of all finite sums of elements of sets in $\mathscr{F}$.

Some fine details of the theory of algebras over non-commutative rings differs significantly from that of commutative rings, so let us spell out all relevant definitions.

\begin{denv*}{Tensor products}
Let $R$ be a ring, $M$ a right $R$-module and $N$ a left $R$-module. If $A$ is an abelian (additive) group, a map $T\colon M\times N\to A$ is said to be \emph{balanced} if
\begin{itemize}
    \item For every $m\in M$ and $n\in N$, the sections $T(\cdot,n)\colon M\to A$ and $T(m,\cdot)\colon N\to A$ are additive;
    \item For every $m\in M$, $n\in N$ and $r\in R$, we have $T(mr,n)=T(m,rn)$
\end{itemize}

The \emph{tensor product} $M\tensor[_R]{\otimes}{_R}N$ is constructed in the usual manner, as the free abelian group generated by symbols $m\otimes n$, where $(m,n)\in M\times N$, modulo the conditions stating that the map $T\colon M\times N\to M\tensor[_R]{\otimes}{_R}N$ is balanced. If no confusion arises we write simply $M\otimes N$.

If $S$ is another ring and $M$ has an $(S,R)$-bimodule structure, then the tensor product $M\tensor[_R]{\otimes}{_R}N$ has a left $S$-module structure determined by $s(m\otimes n)=(sm)\otimes n$ for all $s\in S$ and $(m,n)\in M\times N$.

Similarly, any right module structure on $N$ compatible with the left $R$-module structure induces a right module structure on $M\tensor[_R]{\otimes}{_R}N$.
\end{denv*}

\begin{denv*}{Algebras over non-commutative rings}
An \emph{algebra} $A$ over a ring $R$ (or simply an \emph{$R$-algebra}) consists of a ring $A$ enriched with an $R$-bimodule structure -- the ring addition and the $R$-bimodule addition being the same -- such that for all $a,b\in A$ and $r\in R$,
\[(ra)b=r(ab),\qquad (ar)b=a(rb),\qquad\text{and}\qquad (ab)r=a(br).\]
The middle equation above means that the product of $A$ is balanced, and thus is determined as an $R$-bimodule homomorphism $A\tensor[_R]{\otimes}{_R} A\to A$, $a\otimes b\mapsto ab$.
\end{denv*}

\begin{denv*}{Tensor products of algebras}
In general, the tensor product of two $R$-algebras, where $R$ is a ring, is just an $R$-bimodule, and not an algebra. This is in constrast with the more familiar setting of commutative rings, which we briefly recall below.

If $R$ is a commutative ring, then any left (or right) $R$-module $M$ may be regarded as an $R$-bimodule where the left and right actions of $R$ are the same: $rm=mr$ for all $r\in R$ and $m\in M$. The $R$-bimodules obtained in this manner are called \emph{symmetric}. This defines an injective and full functor from the category of left $R$-modules to the category of $R$-bimodules. However this functor is not essentially surjective, since there are non-symmetric bimodules over commutative rings (e.g.\ $R=D_2(\mathbb{R})$, the ring of $2\times 2$ real diagonal matrices, and $M=M_2(\mathbb{R})$, the $2\times 2$ real matrix algebra, regarded as an $R$-bimodule via matrix multiplication).

If $A$ and $B$ are symmetric $R$-algebras (i.e., the $R$-bimodule structures are symmetric), then the tensor product $A\tensor[_R]{\otimes}{_R}B$ has a canonical symmetric $R$-algebra structure, determined by $(a_1\otimes b_1)(a_2\otimes b_2)=(a_1a_2)\otimes (b_1b_2)$.
\end{denv*}

\subsection{Topological conventions}

A subset $A\subseteq X$ is a \emph{neighbourhood} of a point $x\in X$ if $x$ belongs to its interior $\operatorname{int}(A)$. A \emph{neighbourhood basis} of $x$ is a set $\mathscr{B}$ of neighbourhoods of $x$ such that for any neighbourhood $A$ of $x$, there exists $B\in\mathscr{B}$ such that $B\subseteq A$.

Whenever we state that a topological space $X$ satisfies some property ``locally'', we shall mean that every point of $X$ admits a neighbourhood basis consisting of subsets satisfying such property under the subspace topology. So for example, a space $X$ is locally compact if every point of $X$ admits a neighbourhood basis of compact sets, which will be in general not open in $X$. If necessary for precision, we may say that a topological property holds ``globally'' in constrast with ``locally'' (e.g.\ \emph{globally Hausdorff}).

Even though this ``local requirement'' may  sometimes be weakened to just assuming that each point has at least one neighbourhood (instead of a basis) satisfying such a property, this is not sufficiently strong for our needs. For example, a compact and locally Hausdorff space $X$ may be non-locally compact in our sense (although it is well-known that any compact and globally Hausdorff space is so).

If $X$ and $Y$ are topological spaces, the set of continuous functions from $X$ to $Y$ is denoted by $C(X,Y)$. If $A$ is a topological ring or a topological algebra, the support $\supp(f)$ of a (possibly discontinuous) function $f\colon X\to A$ is the closure of $\left\{x\in X:f(x)\neq 0\right\}$.

\subsection{Topological and étale semigroupoids}

Semigroupoids provide a modern approach to unify the theories of categories and semigroups, and in particular the study of \emph{inverse semigroupoids} allows one to join the theories of groupoids and inverse semigroups. Two working definitions, by Tilson \cite{MR915990} and Exel \cite{MR2419901}, have appeared in the literature. For our purposes, we will consider semigroupoids in the sense of Tilson, which are, in simple terms, ``categories without identities''.

\begin{definition}\label{def:semigroupoid}
    A \emph{semigroupoid} consists of a tuple $(\Lambda,\Lambda^{(0)},\Lambda^{(2)},\so,\ra,\mu)$, where
    \begin{enumerate}[label=(\roman*)]
        \item\label{def:semigroupoid.graph} $\Lambda$ is a directed graph (or quiver) over $\Lambda^{(0)}$, with source and range maps $\so,\ra\colon\Lambda\to\Lambda^{(0)}$, respectively; (we allow loops and multiple arrows between vertices)
        \item\label{def:semigroupoid.lambda2} $\Lambda^{(2)}=\left\{(a,b)\in\Lambda\times\Lambda:\so(a)=\ra(b)\right\}$ is the set of \emph{composable pairs}. Note that this is also the graph of $2$-paths on $\Lambda$;
        \item\label{def:semigroupoid.product} $\mu\colon\Lambda^{(2)}\to\Lambda$ is the \emph{multiplication} or \emph{product} map, and denoted by concatenation -- $\mu(a,b)=ab$;
        \item\label{def:semigroupoid.respect.sources.and.ranges} $\mu$ is a graph morphism from $\Lambda^{(2)}$ to $\Lambda$, i.e., if $\so(a)=\ra(b)$ then $\so(ab)=\so(b)$ and $\ra(ab)=\ra(a)$;
        \item\label{def:semigroupoid.associativity} $\mu$ is associative, i.e., $(ab)c=a(bc)$ whenever $\so(a)=\ra(b)$ and $\so(b)=\ra(c)$.
    \end{enumerate}
\end{definition}

The product of subsets $A,B$ of a semigroupoid $\Lambda$ is
\[AB\defeq\left\{ab:(a,b)\in(A\times B)\cap\Lambda^{(2)}\right\}.\]

A map $\phi\colon\Lambda_1\to\Lambda_2$ between semigroupoids is a \emph{homomorphism} if $(a,b)\in\Lambda_1^{(2)}$ implies $(\phi(a),\phi(b))\in\Lambda_2^{(2)}$ and $\phi(ab)=\phi(a)\phi(b)$. An \emph{isomorphism} is a bijective homomorphism whose inverse is also a homomorphism.

More generally, in the same manner that one may define a category internal to any category with pullbacks, we may also define semigroupoids internal to any category with pullbacks. In particular, a \emph{topological semigroupoid} is simply a semigroupoid $\Lambda$ where both $\Lambda$ and $\Lambda^{(0)}$ are endowed with certain topologies making all structural maps (source, range, and multiplication) continuous. (In this case $\Lambda^{(2)}$ has the product topology, coming from $\Lambda\times\Lambda$.)

\begin{definition}
    An \emph{étale} semigroupoid is a topological semigroupoid $\mathcal{E}$ such that the source and range maps $\so,\ra\colon\mathcal{E}\to\mathcal{E}^{(0)}$ are local homeomorphisms and the vertex set $\mathcal{E}^{(0)}$ is locally compact and globally Hausdorff.
\end{definition}

Note that an étale semigroupoid $\mathcal{E}$ is locally compact and locally Hausdorff. The product map of $\mathcal{E}$ is also a local homeomorphism, so $\mathcal{E}$ is actually a semigroupoid internal to the category of topological spaces and étale maps (local homeomorphisms). This fact may be proven just as in \cite[Proposition 3.5]{arxiv1902.09375}. Note that the Hausdorff property of $\mathcal{E}^{(0)}$ is necessary just to ensure that the product of compact sets is compact, which can also be proven just as in \cite[Lemma 5.1(b)]{arxiv1902.09375}. In short, we have:

\begin{proposition}\label{prop:product.map.is.local.homeo}
    If $\mathcal{E}$ is an étale semigroupoid, then the product of compact subsets of $\mathcal{E}$ is compact, and the product is a local homeomorphism from $\mathcal{E}^{(2)}$ to $\mathcal{E}$.
\end{proposition}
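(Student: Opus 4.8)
The plan is to treat the two assertions separately. The statement about compact sets is quick: given compact $K,L\subseteq\mathcal{E}$, I would write $KL=\mu\bigl((K\times L)\cap\mathcal{E}^{(2)}\bigr)$ and note that $\mathcal{E}^{(2)}$ is closed in $\mathcal{E}\times\mathcal{E}$, being the preimage of the diagonal of $\mathcal{E}^{(0)}\times\mathcal{E}^{(0)}$ under the continuous map $(a,b)\mapsto(\so(a),\ra(b))$; the diagonal is closed precisely because $\mathcal{E}^{(0)}$ is globally Hausdorff, and this is the only point where that hypothesis is used. Then $(K\times L)\cap\mathcal{E}^{(2)}$ is closed in the compact space $K\times L$, hence compact, and $KL$ is its image under the continuous map $\mu$, hence compact.

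For the local homeomorphism assertion, fix a composable pair $(a,b)\in\mathcal{E}^{(2)}$. Since $\so$ and $\ra$ are local homeomorphisms, I would choose open sets $U\ni a$ and $V\ni b$ such that $\so$ restricts to a homeomorphism of $U$ onto an open subset of $\mathcal{E}^{(0)}$ and $\ra$ restricts to a homeomorphism of $V$ onto an open subset of $\mathcal{E}^{(0)}$, and set $W\defeq(U\times V)\cap\mathcal{E}^{(2)}$, an open neighbourhood of $(a,b)$ in $\mathcal{E}^{(2)}$. The goal is to show $\mu|_W$ is a homeomorphism onto an open subset of $\mathcal{E}$. The first step is to identify $W$ with the open set $\Omega\defeq\so(U)\cap\ra(V)\subseteq\mathcal{E}^{(0)}$ via $(a',b')\mapsto\so(a')=\ra(b')$, whose inverse is $x\mapsto\bigl((\so|_U)^{-1}(x),(\ra|_V)^{-1}(x)\bigr)$; under this identification $\mu|_W$ becomes the map $\phi\colon\Omega\to\mathcal{E}$, $\phi(x)=(\so|_U)^{-1}(x)\,(\ra|_V)^{-1}(x)$. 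Because $\so(\alpha\beta)=\so(\beta)$ for any composable pair (Definition~\ref{def:semigroupoid}), one gets $\so\circ\phi=\so\circ(\ra|_V)^{-1}|_{\Omega}$, a composition of (restrictions of) homeomorphisms, so $\so\circ\phi$ is a homeomorphism of $\Omega$ onto an open subset of $\mathcal{E}^{(0)}$; in particular $\phi$, hence $\mu|_W$, is injective.

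It remains to show $\phi(\Omega)$ is open and $\phi$ a homeomorphism onto it. I would cover $\phi(\Omega)$ by open sets $T\subseteq\mathcal{E}$ on which $\so$ restricts to a homeomorphism onto an open subset of $\mathcal{E}^{(0)}$. For such a $T$, the set $\phi(\Omega)\cap T=\phi\bigl(\phi^{-1}(T)\bigr)$ is carried by the injective continuous map $\so|_T$ onto $(\so\circ\phi)\bigl(\phi^{-1}(T)\bigr)$, which is open in $\mathcal{E}^{(0)}$ since $\so\circ\phi$ is a homeomorphism onto an open set; therefore $\phi(\Omega)\cap T$ is the preimage of an open set under $\so|_T$, hence open, and varying $T$ shows $\phi(\Omega)$ is open in $\mathcal{E}$. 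The same description exhibits $\phi^{-1}$ on each $\phi(\Omega)\cap T$ as a composition of continuous maps, so $\phi$ — and hence $\mu|_W$ — is a homeomorphism onto the open set $\phi(\Omega)$; in passing one sees $\phi(\Omega)=UV$, so this also records that the product of two such open sets is open.

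The step I expect to be the real obstacle is the openness of $\mu|_W$, which is exactly why I would route the argument through $\so$ rather than through an explicit local inverse on a neighbourhood in $\mathcal{E}$. The naïve attempt — sending a point $c$ near $ab$ to the pair $\bigl((\ra|_U)^{-1}(\ra(c)),(\so|_V)^{-1}(\so(c))\bigr)$ — breaks down because there is no reason for that pair to be composable: composability is not an open condition without extra separation hypotheses on $\mathcal{E}$. Exploiting that $\mu$ preserves sources converts the problem into one about the genuine local homeomorphism $\so$, which sidesteps the difficulty; this is essentially the mechanism of \cite[Proposition~3.5]{arxiv1902.09375}.
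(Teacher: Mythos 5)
Your compactness argument is correct, and it isolates the Hausdorff hypothesis exactly as the paper does: $\mathcal{E}^{(2)}$ is closed in $\mathcal{E}\times\mathcal{E}$ because $\mathcal{E}^{(0)}$ is globally Hausdorff, and nothing else is needed. (The paper gives no written proof of this proposition; it defers to \cite[Lemma 5.1(b) and Proposition 3.5]{arxiv1902.09375}, and your overall strategy for the second half --- transporting the problem to $\mathcal{E}^{(0)}$ via $\so(ab)=\so(b)$ instead of trying to build a local inverse of $\mu$ inside $\mathcal{E}$, where composability is not an open condition --- is indeed the mechanism of that cited proposition.)

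There is, however, a genuine gap in the local-homeomorphism half. You chose $U$ so that $\so|_U$ is a homeomorphism onto an open set and $V$ so that $\ra|_V$ is, but your key claim that $\so\circ\phi=\so\circ(\ra|_V)^{-1}|_{\Omega}$ is ``a composition of (restrictions of) homeomorphisms'' also uses that $\so$ is injective (in fact a homeomorphism onto an open set) on $(\ra|_V)^{-1}(\Omega)\subseteq V$, and nothing in your choice of $V$ guarantees that: on $V$ the source map is only a local homeomorphism. This is not a cosmetic slip, because for your $U$, $V$ the conclusion about $W$ can actually fail. Take the pair groupoid on a discrete set, with $\so(x,y)=y$, $\ra(x,y)=x$ and $(x,y)(y,z)=(x,z)$, and put $U=\{(x,y_1),(x,y_2)\}$, $V=\{(y_1,z),(y_2,z)\}$ with $y_1\neq y_2$: then $\so$ is injective on $U$ and $\ra$ is injective on $V$, yet the two composable pairs $((x,y_1),(y_1,z))$ and $((x,y_2),(y_2,z))$ in $W$ both multiply to $(x,z)$, so $\mu|_W$ is not injective (equivalently, $\so\circ\phi$ is constant on the two-point set $\Omega$). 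The repair costs one line: since an étale semigroupoid has a basis of open bisections, shrink $U$ and $V$ to open bisections --- in fact it suffices to shrink $V$ so that $\so|_V$ is also a homeomorphism onto an open subset of $\mathcal{E}^{(0)}$. With that choice $\so\circ(\ra|_V)^{-1}$ genuinely is a composition of homeomorphisms, and the rest of your argument (injectivity of $\phi$, openness of $\phi(\Omega)=UV$ via the cover by source-injective open sets $T$, and continuity of $\phi^{-1}=(\so\circ\phi)^{-1}\circ\so|_T$ on each $\phi(\Omega)\cap T$) goes through verbatim.
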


If $\mathcal{E}$ is an étale semigroupoid, then $\mathcal{E}$ admits a basis of open subsets $U$ such that the source and range maps restrict to homeomorphisms of $U$ onto open subsets of $\mathcal{E}^{(0)}$. These sets are called the \emph{open bisections} of $\mathcal{E}$, and will be used heavily throughout this article.

\begin{definition}
   A \emph{bisection} of a semigroupoid $\mathcal{E}$ is a subset $U\subseteq\mathcal{E}$ such that the source and range maps are injective on $U$. If $\mathcal{E}$ is étale, we denote by $\mathbf{B}(\mathcal{E})$ the set of all open bisections of $\mathcal{E}$. In this case, $\mathbf{B}(\mathcal{E})$ is a topological basis for $\mathcal{E}$, and it is closed under products of sets (and hence is a semigroup).
\end{definition}

We will also be interested more specifically in \emph{inverse semigroupoids}, which allow us to consider dynamical systems by means of (global/partial/$\land$-pre-) actions. We refer to \cite{arxiv1902.09375} for the finer details, but nevertheless let us write all relevant definitions and properties below.

\begin{definition}
    An \emph{inverse semigroupoid} is a semigroupoid $\mathcal{S}$ such that for every $s\in\mathcal{S}$, there exists a unique $t\in\mathcal{S}$ such that $\so(s)=\ra(t)$, $\ra(s)=\so(t)$, $sts=s$ and $tst=t$. This unique element is denoted $t=s^*$ and is called the \emph{inverse} of $s$.
\end{definition}

The two best examples of (étale) inverse semigroupoids to keep in mind are (discrete) inverse semigroups and (étale) groupoids.

Just as in the case for inverse semigroups, we denote by $E(\mathcal{S})\defeq\left\{e\in\mathcal{S}:\so(e)=\ra(e)\text{ and }ee=e\right\}$ the set of \emph{idempotents} of an inverse semigroupoid $\mathcal{S}$. Then $E(\mathcal{S})$ is a commutative subsemigroupoid of $\mathcal{S}$, i.e., if $e,f\in E(\mathcal{S})$, then $ef$ is defined if and only if $fe$ is defined, in which case $ef=fe$.

We have a canonical order on any inverse semigroupoid $\mathcal{S}$, where $s\leq t$ is determined by any of the following equivalent statements: (i) $s=ts^*s$; (ii) $s=te$ for some $e\in E(\mathcal{S})$; (iii) $s=ss^*t$; or (iv) $s=ft$ for some $f\in E(\mathcal{S})$. (Note that we implicitly assume that $\so(s)=\so(t)$ and $\ra(s)=\ra(t)$.)

The inverses and the order in inverse semigroupoids obey the usual rules: As long as the statements make sense, we have (a) $(st)^*=t^*s^*$; (b) $(s^*)^*=s$; (c) $s\leq t\iff s^*\leq t^*$; and (d) $s_1\leq t_1$ and $s_2\leq t_2$ implies $s_1s_2\leq t_1t_2$.

For topological (and étale) inverse semigroupoids, we also assume that the inversion map $a\mapsto a^*$ is continuous. In this case, the semigroup $\mathbf{B}(\mathcal{S})$ of open bisections is an inverse semigroup.

\subsection{\texorpdfstring{$\land$}{∧}-preactions, partial actions, and global actions}

The notions of partial and global actions (and the more general but less studied $\land$-preactions) of inverse semigroups and groupoids can be immediatelly generalized to the context of inverse semigroupoids. As we want to have a general approach that encompasses both the topological as the algebraic settings, it is useful to consider actions of inverse semigroupoids on semigroupoids.

If $f$ is a function, we denote its domain by $\dom(f)$ and its range by $\ran(f)$. If $g$ is another function, then the composition $gf$ is defined on ``the largest domain on which the formula $(gf)(x)=g(f(x))$ makes sense'', that is,
\[\dom(gf)\defeq f^{-1}(\ran(f)\cap\dom(g)),\qquad\ran(gf)\defeq g(\dom(g)\cap\ran(f)),\]
and $(gf)(x)=g(f(x))$ for all $x\in\dom(gf)$.

\begin{definition}
    An \emph{ideal} of a semigroupoid $\Lambda$ is a subset $I\subseteq\Lambda$ such that $I\Lambda\cup\Lambda I\subseteq I$.
\end{definition}

\begin{definition}\label{def:action}
    A \emph{$\land$-preaction} $\theta$ of an inverse semigroupoid $\mathcal{S}$ on a semigroupoid $\Lambda$ consists of a collection of maps $\left\{\theta_s\right\}_{s\in\mathcal{S}}$ satisfying:
    \begin{enumerate}[label=(\roman*)]
        \item\label{def:action.big.ideal} For all $v\in\mathcal{S}^{(0)}$, $\bigcup_{s\in\so^{-1}(v)}\dom(\theta_s)$ is an ideal of $\Lambda$, which we temporarily denote $I(\theta,v)$;
        \item\label{def:action.domain.is.ideal} For all $s\in\mathcal{S}$, $\dom(\theta_s)$ is an ideal of $I(\theta,\so(s))$, $\ran(\theta_s)$ is an ideal of $I(\theta,\ra(s))$, and $\theta_s$ is a semigroupoid isomorphism from $\dom(\theta_s)$ to $\ran(\theta_s)$;
        \item For all $s\in\mathcal{S}$, $\theta_{s^*}=\theta_s^{-1}$ (in particular $\ran(\theta_s)=\dom(\theta_{s^*}))$;
        \item\label{def:action.inclusion.of.domains} If $(s,t)\in\mathcal{S}^{(2)}$, then $\theta_{st}$ is an extension of $\theta_s\theta_t$, i.e., $\theta_t^{-1}(\ran(\theta_t)\cap\dom(\theta_s))\subseteq\dom(\theta_{st})$ and $\theta_s(\theta_t(x))=\theta_{st}(x)$ for all $x\in\theta_t^{-1}(\ran(\theta_t)\cap\dom(\theta_s))$.
    \end{enumerate}
    
    A $\land$-preaction is called a \emph{partial action} if it satisfies, in addition:
    \begin{enumerate}[label=(\roman*)]\setcounter{enumi}{4}
        \item If $s\leq t$ in $\mathcal{S}$ then $\dom(\theta_s)\subseteq\dom(\theta_t)$.
    \end{enumerate}
    Finally, $\theta$ is a \emph{global action} if $\theta_{st}=\theta_s\circ\theta_t$ for all $(s,t)\in\mathcal{S}^{(2)}$, or in other words if the set inclusion in item \ref{def:action.inclusion.of.domains} is actually an equality. Every global action is a partial action.
    
    A $\land$-preaction $\theta$ of an inverse semigroupoid $\mathcal{S}$ on a semigroupoid $\Lambda$ will be denoted by ``$\theta\colon\mathcal{S}\curvearrowright\Lambda$''.
\end{definition}

In operational terms:
\begin{itemize}
    \item If $\theta$ is a $\land$-preaction and $(s,t)\in\mathcal{S}^{(2)}$, then 
    \[\theta_s(\theta_t(x))=\theta_{st}(x)\ntag\label{eq:landpreactionproperty}\]
    whenever the \uline{left-hand side} is defined, as it implies that the right-hand side is also defined;
    \item If $\theta$ is a $\land$-preaction and $s\leq t$ in $\mathcal{S}$, then
    \[\theta_s(x)=\theta_t(x)\ntag\label{eq:landpreactionchangebygreater}.\]
    whenever \uline{both sides are simultaneously} defined.
    \item If $\theta$ is a partial action, then Equation \eqref{eq:landpreactionchangebygreater} holds whenever its \uline{left-hand side} is defined, as it implies that the right-hand side is also defined;
    \item If $\theta$ is a global action, then Equation \eqref{eq:landpreactionproperty} holds whenever \uline{any} side is defined, as it implies that the other one also defined.
\end{itemize}

\begin{remark}
    Originally (\cite[Definition 2.41]{arxiv1902.09375}), in the definition of $\land$-preactions we also require a function $\pi\colon\Lambda\to\mathcal{S}^{(0)}$ which is a semigroupoid morphism, in the sense that if $ab$ is defined in $\Lambda$ then $\pi(a)=\pi(b)$. Instead of conditions \ref{def:action.big.ideal} and \ref{def:action.domain.is.ideal} of Definition \ref{def:action}, we require that
    \begin{enumerate}[label=(\roman*)']
        \item $\pi^{-1}(v)$ is an ideal of $\Lambda$ for every $v\in\mathcal{S}^{(0)}$; and
        \item $\dom(\theta_s)$ is an ideal of $\pi^{-1}(\so(s))$ for every $s\in\mathcal{S}$.
    \end{enumerate}
    These two approaches are in fact equivalent in the following sense: Given a $\land$-preaction $\theta$ as in Definition \ref{def:action}, we construct the new semigroupoid \[\Gamma\defeq\bigcup_{v\in\mathcal{S}^{(0)}}\left\{v\right\}\times I(\theta,v)\]
    with canonical product when we see $\mathcal{S}^{(0)}$ as a unit groupoid: $(v,a)(u,b)=(v,ab)$ whenever $v=u$ and $ab$ is defined in $\Lambda$.
    
    Now we define a new action $\widetilde{\theta}$ of $\mathcal{S}$ on $\Gamma$ by setting, for every $s\in\mathcal{S}$, $\dom(\widetilde{\theta}_s)=\left\{\so(s)\right\}\times\dom(\theta_s)$, and $\widetilde{\theta}_s(\so(s),a)=(\ra(s),\theta_s(a))$. 
    
    Let $\pi_1\colon\Gamma\to\mathcal{S}^{(0)}$, $\pi_1(x,a)=x$. Then $(\pi_1,\widetilde{\theta})$ is an $\land$-preaction in the sense of \cite{arxiv1902.09375}. The second coordinate map $\pi_2\colon\Gamma\to\Lambda$, $\pi_2(v,a)=a$, intertwines the $\land$-preactions $\widetilde{\theta}$ and $\theta$, i.e., for every $s\in\mathcal{S}$, $\pi_2$ restricts to a bijection of $\dom(\widetilde{\theta}_s)$ onto $\dom(\theta_s)$, and $\pi_2\circ\widetilde{\theta}_s=\theta_s\circ\pi_2$ on $\dom(\widetilde{\theta}_s)$. All of this remains true for topological semigroupoids, in which case all relevant functions are continuous.
    
    The main point is that the semidirect product $\mathcal{S}\ltimes\Lambda$, as defined below in Definition \ref{def:semidirect.product}, is isomorphic to the semidirect product $\mathcal{S}\ltimes\Lambda$ of \cite[Definition 2.51]{arxiv1902.09375}, and thus all the theory of \cite{arxiv1902.09375} may be transferred immediately to the setting we consider.
\end{remark}

\begin{definition}\label{def:semidirect.product}
    Given a $\land$-preaction $\theta\colon\mathcal{S}\curvearrowright\Lambda$, the \emph{semidirect product} $\mathcal{S}\ltimes_\theta\Lambda$ is the set
    \[\mathcal{S}\ltimes_\theta\Lambda\defeq\bigcup_{s\in\mathcal{S}}\left\{s\right\}\times\dom(\theta_s)=\left\{(s,a)\in\mathcal{S}\times\Lambda:a\in\dom(\theta_s)\right\}\]
    with graph structure over $\mathcal{S}^{(0)}\times\Lambda^{(0)}$ given by
    \[\so(s,a)=\left(\so(s),\so(a)\right)\qquad\text{and}\qquad\ra(s,a)=\left(\ra(s),\ra\left(\theta_s(a)\right)\right)\]
    and product
    \[(s,a)(t,b)=(st,\theta_{t^*}(a\theta_t(b)))\label{eq:semidirectproduct}\]
    whenever $\so(s)=\ra(t)$ and $\so(a)=\ra(\theta_t(b))$.
\end{definition}

We write simply $\mathcal{S}\ltimes\Lambda$ for the semidirect product if no confusion arises from dropping $\theta$ from the notation.

The product of $\mathcal{S}\ltimes\Lambda$ is not associative in general, but it is associative in all cases of interest. For example, if $\dom(\theta_s)$ is an inverse semigroupoid for all $s\in\mathcal{S}$, then the product defined in Definition \ref{def:semidirect.product} is associative, and $\mathcal{S}\ltimes\Lambda$ is a semigroupoid. See \cite[Section 2.5]{arxiv1902.09375} for details.

\begin{definition}
    A $\land$-preaction $\theta\colon\mathcal{S}\curvearrowright\Lambda$ is \emph{associative} if for all $s,t,u\in\mathcal{S}$ with $stu$ defined, and all $(a,b,c)\in\dom(\theta_s)\times\dom(\theta_t)\times\ran(\theta_u)$, we have $\theta_{t^*}(a\theta_t(b))c=\theta_{t^*}(a\theta_t(bc))$. This is equivalent to the semidirect product being associative with respect to the product in Definition \ref{eq:semidirectproduct} (see the proof of \cite[Theorem 2.56]{arxiv1902.09375}).
\end{definition}

In general we will only consider associative $\land$-preactions.

\begin{definition}
    Given a $\land$-preaction $\theta\colon\mathcal{S}\curvearrowright\Lambda$, we abuse notation and also use $\theta$ to denote the \emph{action map}
    \[\theta\colon\mathcal{S}\ltimes\Lambda\to\Lambda,\quad \theta(s,a)=\theta_s(a).\]
    If $\mathcal{S}$ is a topological inverse semigroupoid and $\Lambda$ is a topological semigroupoid, we say that $\theta$ is \emph{continuous} or \emph{open} if the action map is continuous or open, respectively.
\end{definition}

Of course, if $\theta\colon\mathcal{S}\curvearrowright\Lambda$ is a continuous associative $\land$-preaction, then $\mathcal{S}\ltimes\Lambda$ is a topological semigroupoid. If $\mathcal{S}$ and $\Lambda$ are étale and $\theta$ is continuous and open, then $\mathcal{S}\ltimes\Lambda$ is étale as well.

\section{Sectional algebras}

\subsection{Algebraic bundles}

Throughout this section, we let $R$ be a fixed unital topological ring.

We will define $R$-bundles in terms of semigroupoid homomorphisms. However an additional property will be required of the homomorphisms under consideration.

Let $\pi\colon\Lambda\to\Gamma$ be a homomorphism of semigroupoids. Then we have $\Lambda^{(2)}\subseteq(\pi\times\pi)^{-1}(\Gamma^{(2)})$, however the reverse inclusion is not true in general. In general, the image $\pi(\Lambda)$ might not be a subsemigroupoid of $\Gamma$ and thus is not a semigroupoid in any natural manner. This is a realization of the fact that the kernel of $\pi$, $\ker\pi\defeq\left\{(x,y)\in\Lambda\times\Lambda:\pi(x)=\pi(y)\right\}$, is not a congruence for $\Lambda$ in any suitable sense (i.e., in a manner that the quotient $\Lambda/\ker\pi$ has a natural semigroupoid structure).

On the other hand, if $(\pi\times\pi)^{-1}(\Gamma^{(2)})\subseteq\Lambda^{(2)}$, then $\pi(\Lambda)$ is in fact a subsemigroupoid of $\Gamma$, and the quotient $\Lambda/\!\ker\pi$ has a canonical semigroupoid structure, making it isomorphic to $\pi(\Lambda)$.

\begin{definition}
    A semigroupoid homomorphism $\pi\colon\Lambda\to\Gamma$ is \emph{rigid} if $(\pi\times\pi)^{-1}(\Gamma^{(2)})=\Lambda^{(2)}$.
\end{definition}

When $\Lambda$ has no sources nor sinks as a graph, rigidity of a homomorphism $\pi\colon\Lambda\to\Gamma$ has alternative descriptions. In this case, $\pi$ induces a unique vertex map $\pi^{(0)}\colon\Lambda^{(0)}\to\Gamma^{(0)}$ in such a way that $(\pi^{(0)},\pi)$ is a graph morphism, i.e., $\so_{\Gamma}\circ\pi=\pi^{(0)}\circ\so_{\Lambda}$, and similarly for the range maps (see \cite[Proposition 2.19]{arxiv1902.09375}).

On the other hand, following \cite[4.1.2]{MR3597709}, a congruence $\rho$ on $\Lambda$ is called \emph{rigid} (also called \emph{graphed} in \cite[Definition 4.3]{arxiv1902.09375}) if the source and range maps of $\Lambda$ are constant on $\rho$-equivalence classes.

Then the following statements are equivalent (assuming that $\Lambda$ has no sources nor sinks):
\begin{enumerate}[label=(\arabic*)]
    \item $\pi\colon\Lambda\to\Gamma$ is a rigid homomorphism;
    \item The vertex map $\pi^{(0)}\colon\Lambda^{(0)}\to\Gamma^{(0)}$ is injective;
    \item $\ker\pi$ is a rigid congruence.
\end{enumerate}

This will be the additional condition for the semigroupoid homomorphisms we consider for $R$-bundles.

\begin{definition}\label{def:R.bundle}
    An \emph{$R$-bundle} consists of a rigid semigroupoid homomorphism $\pi\colon\Lambda\to\Gamma$, together with an $R$-bimodule structure on the fiber $\pi^{-1}(\gamma)$ for each $\gamma\in\Gamma$, such that for every $(\gamma_1,\gamma_2)\in\Gamma^{(2)}$, the product map
    \[\mu|_{\pi^{-1}(\gamma_1)\times\pi^{-1}(\gamma_2)}\colon\pi^{-1}(\gamma_1)\times\pi^{-1}(\gamma_2)\to\pi^{-1}(\gamma_1\gamma_2),\qquad (x,y)\mapsto xy\]
    is $R$-balanced; thus it is regarded as an $R$-bimodule homomorphism $\mu_{(\gamma_1,\gamma_2)}\colon \pi^{-1}(\gamma_1)\tensor[_R]{\otimes}{_R}\pi^{-1}(\gamma_2)\to\pi^{-1}(\gamma_1\gamma_2)$.
    
    We will refer simply to $\pi$ as the $R$-bundle. The zero of $\pi^{-1}(\gamma)$ may be denoted by $0_\gamma$ if necessary, or simply $0$.
    
    If $\Lambda$ and $\Gamma$ are topological semigroupoids, the $R$-bundle $\pi\colon\Lambda\to\Gamma$ is said to be \emph{continuous} if
    \begin{enumerate}[label=(\roman*)]
        \item\label{def:R.bundle.cont} $\pi$ is continuous;
        \item\label{def:R.bundle.add.cont} The addition $+$ is continuous from $\left\{(x,y)\in\Lambda\times\Lambda:\pi(x)=\pi(y)\right\}$ to $\Lambda$;
        \item\label{def:R.bundle.bimod.cont} Left and right scalar multiplications are continuous (i.e., the map $(r,x)\mapsto(rx,xr)$ from $R\times\Lambda$ to $\Lambda\times\Lambda$ is continuous).
        \item\label{def:R.bundle.0.cont} The zero function $\mathbf{0}\colon\Gamma\to\Lambda$, $\gamma\mapsto 0_\gamma$ is continuous;
    \end{enumerate}
\end{definition}

Note that the nontrivial inclusion $(\pi\times\pi)^{-1}(\Gamma^{(2)})\subseteq\Lambda^{(2)}$, which is guaranteed as $\pi$ is a rigid homomorphism, implies that $\pi^{-1}(\gamma_1)\times\pi^{-1}(\gamma_2)$ is contained in $\Lambda^{(2)}$ whenever $(\gamma_1,\gamma_2)\in\Gamma^{(2)}$, so $\mu_{(\gamma_1,\gamma_2)}$ is a well-defined map.

Moreover, since bimodules are nonempty by definition, then $\pi^{-1}(\gamma)$ is nonempty for all $\gamma\in\Gamma$, so $\pi$ is surjective.

\begin{example}\label{ex:0.discontinuous}
    The assumption that the zero function is continuous does not automatically follow from the other ones, and it will be necessary in the proof of Theorem \ref{thm:isomorphism.of.naive.crossed.product}. For example, let $R$ be any unital topological ring, and $\Gamma$ be any topological semigroupoid without isolated points. Let $\Lambda$ be the same semigroupoid as $\Gamma$, however with the discrete topology. Let $I\colon\Lambda\to\Gamma$ be the identity map, and regard each fiber $I^{-1}(\gamma)=\left\{\gamma\right\}$ as the zero $R$-module, so that we have an $R$-bundle.
    
    Then the zero function $\mathbf{0}\colon\Gamma\to\Lambda$ is the identity map, and it is discontinuous everywhere, even though conditions \ref{def:R.bundle.cont}-\ref{def:R.bundle.bimod.cont} are satisfied.
\end{example}

\begin{remark}
    Suppose that $\pi\colon\Lambda\to\Gamma$ is a continuous $R$-bundle. If $X$ is a topological space, $f\colon X\to\Lambda$ and $h\colon X\to R$ are continuous functions, then the function $hf\colon x\mapsto h(x)f(x)$ is continuous, as it is the composition of $(h,f)\colon X\to R\times \Lambda$ with the scalar multiplication, and similarly $fh$ is continuous.
\end{remark}

\subsection{Sectional algebras}

We will now define the sectional algebra of an $R$-bundle, where $R$ is a unital topological ring. As we work in a general setting, we will need to assume at least that a version of Urysohn's Lemma holds for $R$-valued functions.

\begin{definition}
    Let $R$ be a unital topological ring. A topological space $X$ is said to be \emph{$R$-normal} if for any two disjoint closed subsets $A,B\subseteq X$ there exists a continuous function $f\colon X\to R$ such that $f=0$ on $A$ and $f=1$ on $B$. The space $X$ is said to be \emph{locally $R$-normal} if every point of $X$ admits a basis of $R$-normal neighbourhoods.
\end{definition}

Note that every closed subset of an $R$-normal space is again $R$-normal, with the subspace topology.

This definition covers all cases of interest to us. For example, every locally compact, locally Hausdorff space is locally $\mathbb{R}$ and locally $\mathbb{C}$-normal. More generally, if $R$ is a path-connected unital topological ring (e.g. a unital Banach algebra), then every locally compact, locally Hausdorff space is locally $R$-normal. 

In the more extreme case, every locally compact, locally Hausdorff, zero-dimensional space is locally $R$-normal for any unital topological ring $R$. In particular, ample groupoids (as in \cite[Definition 2.2.4]{MR1724106}) are $R$-normal for any unital topological ring $R$.

For the next definitions, we fix a continuous $R$-bundle $\pi\colon\Lambda\to\Gamma$, where $\Gamma$ is étale. If $X$ is any topological space, the \emph{support} of a function $f\colon X\to Y$ is the closure of $\left\{x\in X:f(x)=0_{\pi(f(x))}\right\}$ in $X$.

\begin{definition}
    A \emph{section} of $\pi$ is a right-inverse of $\pi$, i.e., a function $\alpha\colon\Gamma\to\Lambda$ such that $\pi\circ \alpha=\id_{\Gamma}$. We consider the set of sections of $\pi$ as an $R$-bimodule under pointwise addition and product by $R$.
    
    Given an open subset $V\subseteq\Gamma$, let $C_c(V,\pi)$ be the collection of all sections $\alpha\colon\Gamma\to\Lambda$ such that $\alpha=0$ outside $V$, $\alpha$ is continuous on $V$, and $\supp(\alpha)\cap V$ is compact.
\end{definition}

If $V$ is an open Hausdorff subset of $\Gamma$ and $\alpha$ is a section of $\pi$, note that $\supp(\alpha)\cap V$ is simply the support of the restriction of $\alpha$ to $V$. Since $V$ is Hausdorff, then $\supp(\alpha)\cap V$ is compact if and only if $\alpha=0$ outside of a compact subset of $V$. Thus we shall say that elements of $C_c(V,\pi)$ are \emph{compactly supported in $V$}.

\begin{definition}\label{def:sectional.algebra}
    The \emph{sectional algebra} of the continuous $R$-bundle $\pi\colon\Lambda\to\Gamma$ is the $R$-bimodule $\mathcal{A}(\pi)$ generated by the union of $C_c(V,\pi)$ for all Hausdorff open subsets $V\subseteq\Gamma$.
\end{definition}

By the remark after Definition \ref{ex:0.discontinuous}, $\mathcal{A}(\pi)$ is a $C(\Gamma,R)$-bimodule with pointwise product, i.e., if $f\colon\Gamma\to R$ is continuous and $\alpha\in\mathcal{A}(\pi)$, then the function $f\alpha\colon \gamma\mapsto f(\gamma)\alpha(\gamma)$ also belongs to $\mathcal{A}(\pi)$ (and similarly $\alpha f\in\mathcal{A}(\pi)$).

In order to define the product structure of $\mathcal{A}(\pi)$, we will need to make use of the fact that $\Gamma$ is étale (as we specified above). In this case, the first point is to prove that we may strengthen the condition in Definition \ref{def:sectional.algebra}, to allow us to take generating elements of $\mathcal{A}(\pi)$ in sets of the form $C_c(V,\pi)$ where $V$ belongs to some prescribed basis of $\Gamma$.

The following Lemma may be proven, with obvious modifications, as in the classical ($\mathbb{R}$-valued) case, see e.g. \cite[Theorem 2.13]{MR924157}.

\begin{lemma}[Existence of partitions of unity]\label{lem:partitionsofunity}
    Suppose that $R$ is a unital topological ring and $X$ is a Hausdorff, locally compact, locally $R$-normal space. Then for every compact $K\subseteq X$ and every finite open cover $V_1,\ldots,V_n$ of $K$, there exist continuous functions $f_1,\ldots,f_n\colon X\to R$ such that $\supp(f_i)\subseteq V_i$ for all $i$, and $\sum_{i=1}^n f_i=1$ on $K$. Moreover, $f_i$ may be taken to have compact support on $X$.
\end{lemma}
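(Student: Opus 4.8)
The plan is to reduce the statement to a single-function, Urysohn-type lemma and then to assemble the partition by the classical telescoping-product trick. Concretely, the key claim is: \emph{for each $x\in K$ and each index $i$ with $x\in V_i$, there exist an open neighbourhood $W_x\ni x$ and a continuous function $g_x\colon X\to R$ such that $g_x\equiv 1$ on $W_x$ and $\supp(g_x)$ is a compact subset of $V_i$.} Granting this, I would choose for each $x\in K$ such a pair $(W_x,g_x)$ (fixing once and for all an index $i(x)$ with $x\in V_{i(x)}$ and $\supp(g_x)\subseteq V_{i(x)}$), extract a finite subcover $W_{x_1},\dots,W_{x_m}$ of $K$, put $g_j\defeq g_{x_j}$, and form
\[\phi_j\defeq(1-g_1)(1-g_2)\cdots(1-g_{j-1})\,g_j\qquad(j=1,\dots,m).\]
A routine induction on $m$, valid in any unital ring so that noncommutativity of $R$ is harmless, gives $\sum_{j=1}^m\phi_j=1-\prod_{j=1}^m(1-g_j)$. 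Since $R$ is a topological ring each $\phi_j$ is continuous, and because of the rightmost factor $g_j$ we have $\supp(\phi_j)\subseteq\supp(g_j)$, a compact subset of $V_{i(x_j)}$; moreover on $K$ every point lies in some $W_{x_j}$, where $(1-g_j)$ vanishes, so the whole product $\prod_j(1-g_j)$ vanishes there and $\sum_j\phi_j\equiv 1$ on $K$. Setting $f_i\defeq\sum_{\{\,j\,:\,i(x_j)=i\,\}}\phi_j$ (an empty sum read as $0$) then yields continuous functions with $\supp(f_i)$ a compact subset of $V_i$ and $\sum_{i=1}^n f_i=\sum_{j=1}^m\phi_j\equiv 1$ on $K$, as required.

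To prove the single-function claim I would fix $x\in K$ and an index $i$ with $x\in V_i$. By local $R$-normality, choose an $R$-normal neighbourhood $N$ of $x$. Since $X$ is locally compact and Hausdorff, a standard argument produces open sets with $x\in O'\subseteq\overline{O'}\subseteq O\subseteq\overline{O}\subseteq\operatorname{int}(N)\cap V_i$ and with $\overline{O}$, hence also $\overline{O'}$, compact. Inside the $R$-normal space $N$ the sets $\overline{O'}$ and $N\setminus O$ are disjoint and closed, so $R$-normality furnishes a continuous $h\colon N\to R$ with $h\equiv 1$ on $\overline{O'}$ and $h\equiv 0$ on $N\setminus O$; in particular $\{y\in N:h(y)\neq 0\}\subseteq O$, so its closure $C$ is a compact subset of $\overline{O}\subseteq N$. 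I then define $g_x\colon X\to R$ by $g_x=h$ on $N$ and $g_x=0$ on $X\setminus C$. The two prescriptions agree on the overlap $N\setminus C$ (where $h=0$), the sets $N$ and $X\setminus C$ are open and cover $X$ since $C\subseteq N$, so the pasting lemma makes $g_x$ continuous; then $\supp(g_x)\subseteq C\subseteq V_i$ is compact, and $g_x\equiv 1$ on the open neighbourhood $W_x\defeq O'$ of $x$.

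The remaining details — the inductive verification of the telescoping identity, the pasting-lemma continuity of $g_x$, and the bookkeeping of supports — are exactly as in the classical $\mathbb{R}$-valued case (cf.\ \cite[Theorem 2.13]{MR924157}). The only genuinely new point, which I expect to be the main (though mild) obstacle, is the first step of the single-function claim: because $X$ is merely \emph{locally} $R$-normal, one cannot separate $\overline{O'}$ from $X\setminus O$ by a global $R$-valued function, so the separation must be carried out inside a single $R$-normal neighbourhood $N$ and then extended by zero, and it is precisely the compactness of $\overline{O}$ (forced by local compactness) that keeps the extended function continuous.
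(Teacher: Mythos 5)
Your proof is correct and is essentially the argument the paper invokes (the ``obvious modifications'' of the classical case in \cite[Theorem 2.13]{MR924157}): a Urysohn-type step performed inside a single $R$-normal neighbourhood and extended by zero using local compactness, followed by the telescoping product $\phi_j=(1-g_1)\cdots(1-g_{j-1})g_j$, which indeed needs no commutativity of $R$. The only nitpick is that an $R$-normal neighbourhood $N$ need not be open, so the pasting of $h$ with the zero function should be done over $\operatorname{int}(N)$ and $X\setminus C$ rather than $N$ and $X\setminus C$ --- which your own arrangement $\overline{O}\subseteq\operatorname{int}(N)\cap V_i$ already makes possible, since then $C\subseteq\operatorname{int}(N)$.
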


As usual, we call the collection $f_1,\ldots,f_n$ a \emph{partition of unity subordinate to $V_1,\ldots,V_n$}.

In fact, the same technique as in \cite[Theorem 2.13]{MR924157} gives us the useful permanence of the $R$-normal property in the case of interest. 

\begin{lemma}
    Let $R$ be a unital topological ring and $X$ and $Y$ two compact and Hausdorff spaces. If both $X$ and $Y$ are $R$-normal, then $X\times Y$ is also $R$-normal.
    
    In particular, any finite product of locally compact, locally Hausdorff, and locally $R$-normal spaces is locally $R$-normal.
\end{lemma}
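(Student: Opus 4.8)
The plan is to deduce $R$-normality of $X\times Y$ from that of $X$ and $Y$ separately, replacing the convex combinations used in the classical Urysohn/partition-of-unity argument (in the spirit of \cite[Theorem 2.13]{MR924157}) by a purely ring-theoretic device valid over any topological ring $R$. So fix disjoint closed sets $A,B\subseteq X\times Y$; both are compact, being closed in the compact space $X\times Y$, and we must produce a continuous $f\colon X\times Y\to R$ with $f\equiv 0$ on $A$ and $f\equiv 1$ on $B$.

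For each $p=(x_0,y_0)\in B$, since $A$ is closed and $p\notin A$ there are open sets $U\ni x_0$ in $X$ and $V\ni y_0$ in $Y$ with $(U\times V)\cap A=\emptyset$. As $X$ and $Y$ are compact Hausdorff, hence regular, I would choose a closed neighbourhood $K_X$ of $x_0$ with $K_X\subseteq U$ and a closed neighbourhood $K_Y$ of $y_0$ with $K_Y\subseteq V$. Then $K_X$ and $X\setminus U$ are disjoint closed subsets of $X$, so $R$-normality of $X$ yields a continuous $g\colon X\to R$ with $g\equiv 0$ on $X\setminus U$ and $g\equiv 1$ on $K_X$; likewise $R$-normality of $Y$ gives a continuous $h\colon Y\to R$ with $h\equiv 0$ on $Y\setminus V$ and $h\equiv 1$ on $K_Y$. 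Put $\phi_p(x,y)\defeq g(x)h(y)$, which is continuous by continuity of multiplication in $R$. Then $\phi_p\equiv 1$ on the open neighbourhood $N_p\defeq\operatorname{int}(K_X)\times\operatorname{int}(K_Y)$ of $p$, and $\phi_p$ vanishes on $A$: if $(x,y)\in A$ then $(x,y)\notin U\times V$, so either $x\notin U$, whence $g(x)=0$, or $y\notin V$, whence $h(y)=0$; in both cases $\phi_p(x,y)=0$.

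By compactness of $B$, finitely many $N_{p_1},\dots,N_{p_n}$ cover $B$; set
\[
f\defeq 1-\prod_{i=1}^{n}\bigl(1-\phi_{p_i}\bigr),
\]
the product being taken pointwise in $R$ in the order of the indices. Then $f$ is continuous, since addition, negation and multiplication in $R$ are continuous. On $A$ every factor $1-\phi_{p_i}$ equals $1$, so the product equals $1$ and $f\equiv 0$; on $B$ each point lies in some $N_{p_j}$, so the $j$-th factor vanishes there, hence the whole product vanishes (a product in a ring is $0$ as soon as one factor is), and $f\equiv 1$. This proves the first statement. For the last assertion it suffices, by induction on the number of factors, to treat $X\times Y$ with $X$ and $Y$ locally compact, locally Hausdorff and locally $R$-normal, i.e.\ with each point having a neighbourhood basis of compact Hausdorff $R$-normal sets: given $(x,y)$ and a basic neighbourhood $W_1\times W_2$, pick such $K_1\subseteq W_1$ about $x$ and $K_2\subseteq W_2$ about $y$; then $K_1\times K_2$ is a compact Hausdorff neighbourhood of $(x,y)$ contained in $W_1\times W_2$, and it is $R$-normal by the first part.

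The only step that needs care is the construction of the $\phi_p$: each must be equal to $1$ on a whole neighbourhood of $p$ — not merely at $p$ — while vanishing on all of $A$, which is precisely what forces the passage to a closed neighbourhood inside $U\times V$ (regularity) and the tensor-like form $\phi_p=g\cdot h$. Once these local ``bumps'' are in hand, replacing a partition of unity by $f=1-\prod_i(1-\phi_{p_i})$ is the routine ring-theoretic substitute, and it uses neither commutativity of $R$ nor any order structure on $R$.
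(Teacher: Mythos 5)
Your proof is correct and is essentially the argument the paper has in mind: the first statement follows by the same device as in \cite[Theorem 2.13]{MR924157}, namely bumps of the form $g(x)h(y)$ built from $R$-normality of the two factors and combined via $f=1-\prod_{i}(1-\phi_{p_i})$, which uses only continuity of the ring operations and that a product with a zero factor vanishes; and the second statement follows by passing to a neighbourhood basis of simultaneously compact, Hausdorff and $R$-normal sets. The one place you are terser than the paper is the ``i.e.'' identifying the three separate local properties with the existence of such a common basis: this is true but is not a restatement of the definitions — it is precisely the paper's nesting argument $x\in K\subseteq N\subseteq H\subseteq U$ with $K$ compact, $N$ $R$-normal, $H$ Hausdorff, where $K$ is closed in $H$, hence closed in $N$, hence itself $R$-normal — so that step deserves its one line of justification.
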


The second statement above follows from the fact that any locally compact, locally Hausdorff and locally $R$-normal space $X$ admits a neighbourhood basis of sets which are simultaneously Hausdorff, compact and $R$-normal. Indeed, for every $x\in X$ and every open set $U$ containing $X$, there exist neighbourhoods $K$, $N$ and $H$ of $x$ such that $K$ is compact, $N$ is $R$-normal, and $H$ is Hausdorff, and $x\in K\subseteq N\subseteq H\subseteq U$. As $H$ is Hausdorff and $K$ is compact, $K$ is closed in $H$, so it is also closed in $N$, and thus it is $R$-normal, compact and Hausdorff.

\begin{lemma}\label{lem:sectionalalgebraisgeneratedbybasis}
    Suppose that $R$ is a unital topological ring, and $\pi\colon\Lambda\to\Gamma$ is a continuous $R$-bundle where $\Gamma$ is an étale, locally $R$-normal semigroupoid. If $\mathscr{B}$ is any topological basis of $\Gamma$, then $\mathcal{A}(\pi)$ is generated, as an additive group, by $\bigcup\left\{C_c(U,\pi):U\in\mathscr{B}\right\}$.
\end{lemma}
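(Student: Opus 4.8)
The plan is to reduce an arbitrary generator $\alpha \in C_c(V,\pi)$, where $V \subseteq \Gamma$ is Hausdorff open, to a finite sum of sections supported in basic open sets $U \in \mathscr{B}$, using a partition of unity. First I would note that it suffices to show each $C_c(V,\pi)$ with $V$ Hausdorff open is contained in the additive group generated by $\bigcup\{C_c(U,\pi): U \in \mathscr{B}\}$, since by Definition~\ref{def:sectional.algebra} those sets generate $\mathcal{A}(\pi)$. So fix such a $V$ and $\alpha \in C_c(V,\pi)$, and let $K \defeq \supp(\alpha)\cap V$, which is compact. For each point $\gamma \in K$, using that $\Gamma$ is locally $R$-normal and locally compact, locally Hausdorff (being étale), pick a basic open set $U_\gamma \in \mathscr{B}$ with $\gamma \in U_\gamma$ such that $U_\gamma$ is contained in $V$ and has compact, Hausdorff, $R$-normal closure inside $V$ — here I would invoke the neighbourhood-basis refinement spelled out after the previous lemma (a basis of simultaneously compact, Hausdorff, $R$-normal sets), intersected with a basic set from $\mathscr{B}$; since $\mathscr{B}$ is a basis and such ``good'' neighbourhoods form a basis, one can arrange $U_\gamma \in \mathscr{B}$ contained in a good neighbourhood inside $V$.

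Next, by compactness of $K$, extract a finite subcover $U_{\gamma_1},\ldots,U_{\gamma_n}$ of $K$. I would then apply Lemma~\ref{lem:partitionsofunity} (partitions of unity), using that $V$ is Hausdorff, locally compact and locally $R$-normal: there exist continuous $f_1,\ldots,f_n\colon V \to R$ with $\supp(f_i)\subseteq U_{\gamma_i}$ (compact in $V$) and $\sum_i f_i = 1$ on $K$. Extending each $f_i$ by $0$ outside $V$ — which is legitimate since $\supp(f_i)$ is a compact subset of the Hausdorff open set $V$, hence closed in $\Gamma$, so the extension is continuous on all of $\Gamma$ — we obtain $f_i \in C_c(V,\pi)$-compatible scalar functions on $\Gamma$. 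Now set $\alpha_i \defeq f_i \alpha$. By the remark following Definition~\ref{ex:0.discontinuous}, $f_i\alpha$ is continuous wherever $\alpha$ is, namely on $V$; it vanishes outside $U_{\gamma_i}$ (since $f_i$ does and both vanish outside $V \supseteq U_{\gamma_i}$, but more precisely outside $\supp(f_i) \subseteq U_{\gamma_i}$); and its support is contained in $\supp(f_i) \cap U_{\gamma_i}$, which is compact. Hence $\alpha_i \in C_c(U_{\gamma_i},\pi)$ with $U_{\gamma_i} \in \mathscr{B}$.

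Finally, I would check that $\alpha = \sum_{i=1}^n \alpha_i$. On $\Gamma \setminus V$ both sides are $0$; on $V$, if $\gamma \in K$ then $\sum_i f_i(\gamma) = 1$ so $\sum_i \alpha_i(\gamma) = \alpha(\gamma)$, while if $\gamma \in V \setminus K$ then $\alpha(\gamma) = 0_\gamma$ (as $\gamma \notin \supp(\alpha)$), so each $\alpha_i(\gamma) = f_i(\gamma)\,0_\gamma = 0_\gamma$ and the sum is $0_\gamma = \alpha(\gamma)$. Thus $\alpha$ lies in the additive group generated by $\bigcup\{C_c(U,\pi): U \in \mathscr{B}\}$, completing the argument.

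The main obstacle I anticipate is the bookkeeping around supports and the "extension by zero" step: one must be careful that $\supp(f_i)$ being compact in the Hausdorff open set $V$ makes it closed in $\Gamma$ (so the extended $f_i$ is genuinely continuous on $\Gamma$, not merely on $V$), and that the support of a section is defined via the zero of the bundle fiber — so $f_i \alpha$ really has the claimed support, using continuity of the zero section and of scalar multiplication from Definition~\ref{def:R.bundle}. The rest is a routine partition-of-unity argument essentially identical to the classical Steinberg-algebra case.
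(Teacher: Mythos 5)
Your proof is correct and follows essentially the same route as the paper's: cover the compact set $\supp(\alpha)\cap V$ by finitely many basic sets $B_i\in\mathscr{B}$ contained in $V$, take a partition of unity subordinate to them (Lemma \ref{lem:partitionsofunity}), and write $\alpha=\sum_i f_i\alpha$ with $f_i\alpha\in C_c(B_i,\pi)$. The only quibble is your side-claim that $\supp(f_i)$, being compact in the Hausdorff set $V$, is closed in $\Gamma$ so that the zero-extension of $f_i$ is continuous on all of $\Gamma$ --- this can fail when $\Gamma$ is non-Hausdorff, but it is also unnecessary, since membership in $C_c(U,\pi)$ only requires continuity on $U$ together with vanishing outside $U$.
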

\begin{proof}
    It is sufficient to prove that if $V$ is open in $\Gamma$ and Hausdorff, and $\alpha\in C_c(V,\pi)$, then $f$ is a sum of sections in $\bigcup\left\{C_c(U,\pi):U\in\mathscr{B}\right\}$.
    
    Consider a cover $B_1,\ldots,B_n$ of the compact $\supp(\alpha)\cap V$, where $B_i\in\mathscr{B}$ and $B_i\subseteq V$. Take a partition of unity $h_1,\ldots,h_n$ of $\supp(\alpha)\cap V$ subordinate to $B_1,\ldots,B_n$. Then for each $i$, the section $\alpha_i\defeq h_i\alpha$ is continuous and compactly supported on $B_i$, i.e., $\alpha_i\in C_c(B_i,\pi)$. Since $\sum_{i=1}^n h_i=1$ on $\supp(\alpha)\cap V$ then $\sum_{i=1}^n\alpha_i=\alpha$.\qedhere
\end{proof}

Let us reiterate that $\Gamma$ is étale. As a last point before being able to describe the multiplicative structure of $\mathcal{A}(\pi)$, note that for all $\alpha\in\mathcal{A}(\pi)$ and all $x\in\Gamma^{(0)}$, there are only finitely many elements $\gamma\in\Gamma$ such that $\so(\gamma)=x$ and $\alpha(\gamma)\neq 0$: Indeed, there exists a compact set $K$ such that $\alpha=0$ outside $K$. As $\Gamma$ is étale, then $\so^{-1}(x)$ is a closed subspace of $\Gamma$, and discrete with the subspace topology. In particular $\so^{-1}(x)\cap K$ is a discrete, closed subset of the compact $K$, and hence it is finite, and all elements $\gamma\in\so^{-1}(x)$ for which $\alpha(\gamma)\neq 0$ belong to this set.

Similarly, there are only finitely many $\gamma\in\Gamma$ such that $\ra(\gamma)=x$ and $\alpha(\gamma)\neq 0$

We thus define the \emph{convolution product} of two sections $\alpha,\beta\in \mathcal{A}(\pi)$ as
\[(\alpha\ast\beta)(\gamma)=\sum_{ab=\gamma}\alpha(a)\beta(b).\]
Note that if $ab=\gamma$ in $\Gamma$, then $\ra(a)=\ra(\gamma)$ and $\so(b)=\so(\gamma)$, so the sum above is finite by the previous paragraph. Then $\alpha\ast\beta$ is a well-defined section of $\pi$. However, we still need to verify that it is an element of $\mathcal{A}(\pi)$.

\begin{proposition}
    Suppose that $\Gamma$ is étale and locally $R$-normal. If $\alpha,\beta\in \mathcal{A}(\pi)$ then $\alpha\ast\beta\in \mathcal{A}(\pi)$.
\end{proposition}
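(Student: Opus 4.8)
The plan is to reduce to the case where $\alpha$ and $\beta$ are generators of $\mathcal{A}(\pi)$, and then to exploit the fact that $\Gamma$ is étale so that the relevant open sets may be taken to be open bisections, on which the source and range maps are homeomorphisms onto their images. By bilinearity of the convolution product (which follows from the $R$-balancedness of the fiberwise multiplication maps $\mu_{(\gamma_1,\gamma_2)}$ and the distributivity of the ring product of $R$), it suffices to show $\alpha\ast\beta\in\mathcal{A}(\pi)$ when $\alpha\in C_c(U,\pi)$ and $\beta\in C_c(V,\pi)$ for some open sets $U,V$; and by Lemma \ref{lem:sectionalalgebraisgeneratedbybasis}, applied with $\mathscr{B}=\mathbf{B}(\Gamma)$ the basis of open bisections (which is a basis since $\Gamma$ is étale), we may further assume $U$ and $V$ are open bisections.

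With $U,V\in\mathbf{B}(\Gamma)$, the key observation is that the ``partial multiplication map'' $m\colon (U\times V)\cap\Gamma^{(2)}\to\Gamma$ is a homeomorphism onto an open subset $W\defeq UV$ of $\Gamma$: indeed $W=UV$ is open because $\mathbf{B}(\Gamma)$ is closed under products, and $m$ is injective and a local homeomorphism by Proposition \ref{prop:product.map.is.local.homeo} (restricting a local homeomorphism to a set on which it is injective yields a homeomorphism onto an open image). Hence for each $\gamma\in W$ there is exactly one pair $(a,b)=(a(\gamma),b(\gamma))\in U\times V$ with $ab=\gamma$, and $\gamma\mapsto a(\gamma)$, $\gamma\mapsto b(\gamma)$ are continuous on $W$. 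Consequently the convolution reduces to a single term: $(\alpha\ast\beta)(\gamma)=\alpha(a(\gamma))\,\beta(b(\gamma))$ for $\gamma\in W$, and $(\alpha\ast\beta)(\gamma)=0$ for $\gamma\notin W$ (any factorization $\gamma=ab$ with $\alpha(a)\ne 0\ne\beta(b)$ forces $a\in\supp(\alpha)\subseteq\overline U$, $b\in\overline V$; one handles the boundary points by noting $\alpha$, $\beta$ vanish off $U$, $V$ — so in fact $\alpha(a)\beta(b)=0$ unless $a\in U$, $b\in V$, i.e.\ unless $\gamma\in W$). Therefore $\alpha\ast\beta$ is supported in $W$, vanishes outside it, and on $W$ equals $\gamma\mapsto \alpha(a(\gamma))\beta(b(\gamma))$.

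It remains to check that this section lies in $C_c(W',\pi)$ for a suitable Hausdorff open $W'\subseteq W$, hence in $\mathcal{A}(\pi)$. Continuity on $W$: the map $\gamma\mapsto\big(a(\gamma),b(\gamma)\big)$ is continuous from $W$ into $U\times V\subseteq\Lambda\times\Lambda$ — here I compose with $\alpha\times\beta$, which is continuous on $U\times V$, and then with the fiberwise multiplication; the latter is continuous because it is the restriction to $(\pi\times\pi)^{-1}(\Gamma^{(2)})$ of the bundle multiplication $\mu\colon\Lambda^{(2)}\to\Lambda$, which is continuous by definition of the ambient topological semigroupoid $\Lambda$ together with Definition \ref{def:R.bundle} (the $R$-bundle is continuous). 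Support: $\supp(\alpha\ast\beta)\cap W$ is contained in the image under $m$ of $(\supp(\alpha)\cap U)\times(\supp(\beta)\cap V)$ intersected with $\Gamma^{(2)}$, which is compact (product of compacts, intersected with a closed set, then pushed through a continuous map), so $\alpha\ast\beta$ is compactly supported in $W$. Finally, to land in a \emph{Hausdorff} open set I shrink: cover the compact $\supp(\alpha\ast\beta)\cap W$ by finitely many Hausdorff open subsets of $W$ (possible since $\Gamma$ is locally Hausdorff — being étale — and locally $R$-normal), take a partition of unity as in Lemma \ref{lem:partitionsofunity}, and write $\alpha\ast\beta$ as a finite sum of sections each compactly supported in a Hausdorff open set, exactly as in the proof of Lemma \ref{lem:sectionalalgebraisgeneratedbybasis}.

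The main obstacle I anticipate is the bookkeeping around supports and the non-Hausdorff subtlety: one must be careful that ``$(\alpha\ast\beta)(\gamma)=0$ for $\gamma\notin W$'' genuinely holds (using that $\alpha=0$ off $U$ and $\beta=0$ off $V$, not merely off their closures), and that the compact set controlling the support of $\alpha\ast\beta$ is correctly identified and its continuous image is compact — the latter uses Proposition \ref{prop:product.map.is.local.homeo} (products of compacts are compact in an étale semigroupoid), which is precisely why the étale hypothesis is needed here. Everything else is a routine transcription of the argument for Steinberg algebras.
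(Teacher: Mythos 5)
Your proposal is correct and follows essentially the paper's own route: reduce via bilinearity and Lemma \ref{lem:sectionalalgebraisgeneratedbybasis} to $\alpha\in C_c(U,\pi)$, $\beta\in C_c(V,\pi)$ with $U,V\in\mathbf{B}(\Gamma)$, use the unique factorization over the bisections to collapse the convolution to a single term, get continuity from the (local) homeomorphism structure of the product map, and get compact support from Proposition \ref{prop:product.map.is.local.homeo}. The only remark is that your final ``shrinking'' paragraph is superfluous --- and, as written, its appeal to Lemma \ref{lem:partitionsofunity} tacitly requires a Hausdorff ambient space --- since $UV$ is itself an open bisection and hence Hausdorff (the source map restricts to a homeomorphism onto an open subset of the Hausdorff space $\Gamma^{(0)}$), so one concludes $\alpha\ast\beta\in C_c(UV,\pi)\subseteq\mathcal{A}(\pi)$ directly, exactly as the paper does.
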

\begin{proof}
    By Lemma \ref{lem:sectionalalgebraisgeneratedbybasis}, it is enough to prove that if $\alpha\in C_c(U,\pi)$ and $\beta\in C_c(V,\pi)$, where $U,V\in\mathbf{B}(\Gamma)$ are open bisections, then $\alpha\ast\beta\in C_c(UV,\pi)$. For this, consider the compact sets $K\defeq \supp(\alpha)\cap U$ and $L\defeq\supp(\beta)\cap V$
    
    By definition of the convolution product, $\alpha\ast\beta=0$ outside of $KL$, which is compact (Proposition \ref{prop:product.map.is.local.homeo}), and in particular $\alpha\ast\beta=0$ outside $UV$. So in order to conclude that $\alpha\ast\beta\in C_c(UV,\pi)$, we just need to prove that $\alpha\ast\beta$ is continuous on $UV$.
    
    Suppose that we have a converging net $\gamma_i\to \gamma$ in $UV$. First, write $\gamma_i=u_iv_i$ for each $i$, and $a=uv$, where $u_i,u\in U$ and $v_i,v\in V$. Since $U$ and $V$ are bisections and $\alpha$ and $\beta$ are zero outside $U$ and $V$, respectively, then $(\alpha\ast\beta)(\gamma_i)=\alpha(u_i)\beta(v_i)$ for each $i$, and $(\alpha\ast\beta)(\gamma)=\alpha(u)\beta(v)$. Since $\gamma_i\to \gamma$, then $\ra(u_i)=\ra(\gamma_i)\to\ra(\gamma)=\ra(u)$, and since the range map is a homeomorphism from $U$ to $\ra(U)$ then $u_i\to u$, so $\alpha(u_i)\to \alpha(u)$ as $\alpha$ is continuous on $U$. Similarly, $\beta(v_i)\to\beta(v)$, and we conclude that $(\alpha\ast\beta)(\gamma_i)=\alpha(u_i)\beta(v_i)\to \alpha(u)\beta(v)=(\alpha\ast\beta)(\gamma)$, since the product of $\Lambda$ is continuous. Therefore $\alpha\ast\beta$ is continuous on $UV$.\qedhere
\end{proof}

\subsection{Graded sectional algebras}

In the next section, we will describe several classes of algebras as sectional algebras in canonical manners. In particular, all graded algebras may be regarded as sectional algebras of bundles over discrete semigroupoids. We will thus be interested in considering \emph{graded} sectional algebras as well. This is in vogue with the current trend on the study of graded Steinberg algebras, and more specifically Leavitt path algebras. See \cite{MR2310414,MR3848066,MR3372123,MR2363133,MR2738365}. Graded algebras and their graded homomorphisms are, by definition, more rigid (and manageable) than general ungraded algebras, but their graded theory alone may be used to obtain insight and in fact results about the ungraded structure as well. See \cite{MR2046303} for an introduction to the classical aspects of the subject, \cite{MR1632749,MR1895414,MR3339401} for semigroup graded rings, and \cite{MR2111220} for groupoid graded rings.

The same notion of grading by a category (e.g.\ see \cite[Definition 2.1]{MR2111220}) may be used in the context of semigroupoids, so as to cover both semigroup and category gradings.

\begin{definition}\label{def:gradedalgebraoversemigroupoid}
    Let $G$ be a discrete semigroupoid. An algebra $A$ over a ring $R$ is \emph{$G$-graded} if it is equipped with a family $\left\{A_g\right\}_{g\in G}$ of sub-bimodules satisfying
\begin{enumerate}[label=(\roman*)]
    \item $A=\oplus_{g\in G}A_g$;
    \item $A_gA_h\subseteq A_{gh}$ whenever $(g,h)\in G^{(2)}$;
    \item $A_gA_h=\left\{0\right\}$ whenever $(g,h)\not\in G^{(2)}$.
\end{enumerate}
    Each sub-bimodule $A_g$ is called a \emph{homogeneous component} of the graded algebra $A$.
    
    A \emph{graded homomorphism} between two $G$-graded algebras $A=\oplus_{g\in G}A_g$ and $B=\oplus_{g\in G}B_g$ is an algebra homomorphism $\phi\colon A\to B$ satisfying $\phi(A_g)\subseteq B_g$ for all $g\in G$. If $\phi$ is bijective then it is called a \emph{graded isomorphism}. (The inverse of any graded isomorphism is also a graded homomorphism.)
\end{definition}

Suppose that $\pi\colon\Lambda\to\Gamma$ is a continuous $R$-bundle, where $\Gamma$ is étale and locally $R$-normal. $G$ is a discrete semigroupoid and $c\colon \Gamma\to G$ is a continuous semigroupoid homomorphism. We also call the homomorphism $c$ a \emph{grading} of $\Gamma$.

Define a grading on $\mathcal{A}(\pi)$ by setting, for all $g\in G$
\[\mathcal{A}(\pi)_g=\sum\left\{C_c(U,\pi):U\text{ is open, Hausdorff and }U\subseteq c^{-1}(g)\right\}.\]
Note that, as $G$ is discrete and $c$ is continuous, then $c^{-1}(g)$ is open in $\Gamma$ for all $g\in G$. By Lemma \ref{lem:sectionalalgebraisgeneratedbybasis}, $\mathcal{A}(\pi)$ is generated as an additive group by $\bigcup_{g\in G}A(\pi)_g$. It is then straightforward to verify that this yields a $G$-graded structure to $\mathcal{A}(\pi)$. With this $G$-grading, we call $\mathcal{A}(\pi)$ a \emph{graded sectional algebra} (via the homomorphism $c$).

The following alternative description of the homogeneous components $\mathcal{A}(\pi)_g$ is useful:

\begin{lemma}
Given $g\in G$, the following two equalities hold:
\[\mathcal{A}(\pi)_g=\left\{\alpha\in\mathcal{A}(\pi):\alpha=0\text{ outside of }c^{-1}(g)\right\}=\left\{\alpha\in\mathcal{A}(\pi):\supp(\alpha)\subseteq c^{-1}(g)\right\}.\]
\end{lemma}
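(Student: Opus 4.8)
The plan is to prove the chain of two equalities in the Lemma by first establishing the second equality (the "outside" description equals the "support" description), which is essentially a point-set topology observation, and then proving the first equality (that $\mathcal{A}(\pi)_g$ coincides with the "outside" description) by a double inclusion, with the nontrivial direction handled via partitions of unity.

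For the second equality, recall that by definition $\supp(\alpha)$ is the closure in $\Gamma$ of $\{\gamma : \alpha(\gamma)\neq 0_\gamma\}$. Since $G$ is discrete and $c$ is a continuous homomorphism, $c^{-1}(g)$ is clopen in $\Gamma$ (it is open as noted in the text, and its complement $c^{-1}(G\setminus\{g\})$ is also open, being a preimage of an open set). If $\alpha = 0$ outside $c^{-1}(g)$, then $\{\gamma:\alpha(\gamma)\neq 0_\gamma\}\subseteq c^{-1}(g)$, and taking closures and using that $c^{-1}(g)$ is closed gives $\supp(\alpha)\subseteq c^{-1}(g)$; conversely $\supp(\alpha)\subseteq c^{-1}(g)$ immediately forces $\alpha=0$ outside $c^{-1}(g)$ since $\{\gamma:\alpha(\gamma)\neq 0_\gamma\}\subseteq\supp(\alpha)$. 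So this equality is quick, and the crucial use is the clopenness of $c^{-1}(g)$.

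For the first equality, the inclusion $\mathcal{A}(\pi)_g\subseteq\{\alpha\in\mathcal{A}(\pi):\alpha=0\text{ outside }c^{-1}(g)\}$ is clear: a generator of $\mathcal{A}(\pi)_g$ lies in some $C_c(U,\pi)$ with $U\subseteq c^{-1}(g)$, hence vanishes outside $c^{-1}(g)$, and the property of vanishing outside $c^{-1}(g)$ is preserved under $R$-linear combinations. For the reverse inclusion, take $\alpha\in\mathcal{A}(\pi)$ with $\alpha=0$ outside $c^{-1}(g)$. Write $\alpha$ as a finite sum $\alpha=\sum_j r_j\beta_j s_j$ (or just a finite sum of generators, absorbing the scalars) with each $\beta_j\in C_c(V_j,\pi)$ for some Hausdorff open $V_j$. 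The issue is that the individual $\beta_j$ need not vanish outside $c^{-1}(g)$. The fix is to replace each $V_j$ by $V_j\cap c^{-1}(g)$, which is still Hausdorff and open, and to refine using Lemma \ref{lem:sectionalalgebraisgeneratedbybasis}: since $\mathbf{B}(\Gamma)$ is a basis and so is the collection of open bisections contained in some $c^{-1}(h)$ (as $c$ is a homomorphism from an étale semigroupoid, open bisections can be shrunk to lie in a single fiber $c^{-1}(h)$), we may assume each generator in the expansion of $\alpha$ lies in $C_c(U,\pi)$ for an open bisection $U$ with $U\subseteq c^{-1}(h)$ for a single $h\in G$ — i.e. each generator is homogeneous. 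Grouping these homogeneous generators by their degree $h$ and using that $\alpha$, being a section vanishing outside the clopen set $c^{-1}(g)$, must have all components of degree $h\neq g$ cancel (evaluate at points of $c^{-1}(h)$: the sum of the degree-$h$ generators there equals $\alpha$ restricted there, which is $0$), we conclude $\alpha$ equals its degree-$g$ part, which manifestly lies in $\mathcal{A}(\pi)_g$.

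The main obstacle is the bookkeeping in the reverse inclusion of the first equality: one must be careful that decomposing an arbitrary element of $\mathcal{A}(\pi)$ into homogeneous generators is legitimate, which relies on being able to choose the basis in Lemma \ref{lem:sectionalalgebraisgeneratedbybasis} to consist of open bisections each contained in a single fiber $c^{-1}(h)$. This in turn uses that for any $\gamma\in\Gamma$, since $c$ is continuous into a discrete space, $\gamma$ has a neighborhood inside $c^{-1}(c(\gamma))$, and intersecting with an open bisection gives an open bisection in a single fiber; such sets form a basis. Everything else is routine, but this reduction to homogeneous generators is the step that makes the "only the degree-$g$ part survives" argument go through cleanly, so I would state it carefully, perhaps as a preliminary remark, before splitting $\alpha$ by degree.
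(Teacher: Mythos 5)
Your proof is correct, but the decisive step is handled differently from the paper. Where you refine the generating decomposition via Lemma \ref{lem:sectionalalgebraisgeneratedbybasis} (using the basis of open bisections each contained in a single fiber $c^{-1}(h)$), group the resulting homogeneous generators by degree, and then cancel the off-degree parts by evaluating on the pairwise disjoint clopen fibers, the paper argues in one line: since $c^{-1}(g)$ is clopen, the $R$-valued characteristic function $1_{c^{-1}(g)}$ is continuous, and because $\mathcal{A}(\pi)$ is a $C(\Gamma,R)$-bimodule under pointwise product (the remark following Definition \ref{def:sectional.algebra}), an $\alpha=\sum_i\alpha_i$ with $\alpha_i\in C_c(V_i,\pi)$ that vanishes outside $c^{-1}(g)$ satisfies $\alpha=1_{c^{-1}(g)}\alpha=\sum_i 1_{c^{-1}(g)}\alpha_i$ with $1_{c^{-1}(g)}\alpha_i\in C_c(V_i\cap c^{-1}(g),\pi)\subseteq\mathcal{A}(\pi)_g$. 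Both routes rest on the clopenness of the fibers, and your handling of the second equality is identical to the paper's. What the paper's multiplication trick buys is that it sidesteps entirely the bookkeeping you flag as the main obstacle (re-expanding $\alpha$ over a finer basis and tracking degrees); what your route buys is slightly more information, since the degree-grouping argument in effect shows that homogeneous components supported on distinct fibers cannot interfere, which is the key point behind the direct-sum property of the grading. Your reduction to fiber-contained bisections is legitimate for exactly the reason you give, so there is no gap -- just a longer path to the same conclusion.
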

\begin{proof}
Since $c\colon\Gamma\to G$ is continuous and $G$ is discrete, then $c^{-1}(g)$ is clopen, so the rightmost equality holds. Moreover, every element of $\mathcal{A}(\pi)_g$ is clearly zero outsize $c^{-1}(g)$.

It is thus enough to prove that any $\alpha\in\mathcal{A}(\pi)$ which is zero outsize $c^{-1}(g)$ belongs to $\mathcal{A}(\pi)_g$. Indeed, write $\alpha=\sum_{i=1}^n\alpha_i$, where $\alpha_i\in C_c(V_i,\pi)$ for certain Hausdorff open subsets $V_i\subseteq\Gamma$. Consider the ($R$-valued) characteristic function $1_{c^{-1}(g)}$ of $c^{-1}(g)$. Since $\alpha=0$ outside $c^{-1}(g)$ then \[\alpha=1_{c^{-1}(g)}\alpha=\sum_{i=1}^n 1_{c^{-1}(g)}\alpha_i,\]
where the products are pointwise. For each $i=1,\ldots,n$, we have $1_{c^{-1}(g)}\alpha_i\in C_c(V_i\cap c^{-1}(g),\pi)$, again because $c^{-1}(g)$ is clopen.

Therefore $1_{c^{-1}(g)}\alpha_i\in\mathcal{A}(\pi)_g$ for each $i$, so $\alpha\in\mathcal{A}(\pi)_g$ as well.\qedhere
\end{proof}

If $\Gamma$ is a discrete semigroupoid, then we may grade $\mathcal{A}(\pi)$ via the identity morphism $\id_\Gamma$ of $\Gamma$. We call this the \emph{trivial grading} of $\mathcal{A}(\pi)$. In this case, the homogeneous component at an element $\gamma\in\Gamma$ is isomorphic to $\pi^{-1}(\gamma)$; Namely, the map
\[\mathcal{A}(\pi)_\gamma\to\pi^{-1}(\gamma),\qquad \alpha\mapsto\alpha(\gamma)\]
is a bimodule isomorphism.

\section{Examples of sectional algebras}

\subsection{Graded algebras as sectional algebras}

As explained in the Introduction, continuous $R$-bundles may be interpreted as ``continuous gradings'' of $R$-algebras. We may make this interpretation formal by considering the inverse direction, and showing that bundles over discrete semigroupoids are equivalent to graded algebras.

In one direction, we already know how to associate a graded algebra to any discrete $R$-bundle $\pi\colon\Lambda\to G$ (i.e., both $\Lambda$ and $G$ are discrete): Simply take the sectional algebra $\mathcal{A}(\pi)$ with its trivial $G$-grading.

In the other direction, given a $\mathcal{G}$-graded algebra $A=\oplus_{g\in\Gamma}A_g$ we construct the semigroupoid $S(A)=\bigcup_{g\in\mathcal{G}}\left\{g\right\}\times A_g$ (which may be regarded as the disjoint union of the homogeneous components of $A$). The product of $S(A)$ is determined by
\[(g,a)(h,b)=(gh,ab)\]
whenever $(g,h)\in\mathcal{G}^{(2)}$. The first coordinate projection $\eta_A\colon S(A)\to\mathcal{G}$, $(g,a)\mapsto g$, is a rigid homomorphism, and each section $\eta_A^{-1}(g)=\left\{g\right\}\times A_g$ has an $R$-bimodule structure coming from $A_g$. This determines an algebraic bundle $\eta_A$.

These two constructions are inverse to each other in a categorical sense: if we start with a graded algebra $A=\oplus_{g\in G}A_g$, then $\mathcal{A}(\eta_A)$ and $A$ are graded isomorphic: If we denote by $\pi_2(s,a)=a$ the second coordinate projection of $S(A)$, then the map  \[\mathcal{A}(\eta_A)\to A,\qquad \alpha\mapsto\sum_{g\in G}\pi_2(\alpha(g))\]
is a graded isomorphism.

Similarly, it is also straightforward to verify that if $\pi\colon\Lambda\to G$ is a discrete $R$-bundle then $\Lambda$ is isomorphic, as a semigroupoid, to $S(\mathcal{A}(\pi))$. The isomorphism is obtained by regarding $S(\mathcal{A}(\pi))$ as the disjoint union of the homogeneous components $\mathcal{A}(\pi)_g$, which we already know to be isomorphic to $\pi^{-1}(g)$. So we have an identification $S(\mathcal{A}(\pi))\simeq \sqcup_{g\in G}\pi^{-1}(g)=\Lambda$.

To completely formalize this equivalence, let us say that a homomorphism between bundles $\pi^i\colon\Lambda^i\to G$, $i=1,2$ (over the same semigroupoid $G$) is a semigroupoid homomorphism $\phi\colon\Lambda^1\to\Lambda^2$ for which $\pi^2\circ\phi=\pi^1$. Straightforward arguments, much as above, show that the category of $G$-graded algebras and their graded homomorphisms is equivalent to the category of discrete bundles over $G$ and their homomorphisms.

\subsection{``Naïve'' Crossed products as sectional algebras}

Crossed products of $C^*$-algebras by actions of inverse semigroups were originally considered by Sieben in \cite{MR1456588}, as an alternative to Exel's approach to $C^*$-dynamics via partial group actions (\cite{MR1276163}). Sieben's main result was that every $C^*$-crossed product by a partial group action is isomorphic to a $C^*$-crossed product by a (global) inverse semigroup action. However, $C^*$-crossed products by inverse semigroups were defined in terms of ``covariant representations'' in \cite{MR1456588}. 

In \cite{MR2559043}, Exel gave an alternative description of inverse semigroup crossed products, more algebraic in flavour and may be applied as well in the discrete setting (for example, when considering Steinberg algebras). Let us briefly and somewhat informally describe the procedure to construct an inverse semigroup crossed product: Let $\theta$ be a global action of an inverse semigroup $S$ on an algebra $A$:
\begin{enumerate}[label=\arabic*.]
    \item First, one proceeds in a manner similar as to when constructing a twisted group algebra: Consider the bimodule of all finite sums of elements $\delta_s a$, where $a\in\dom(\theta_s)$, with product determined by $(\delta_s a)(\delta_t b)=\delta_{st}\theta_{t^*}(a\theta_t(b))$. This defines an algebra $S\bigstar A$, which we call the ``naïve'' crossed product.
    \item Then consider the ideal $N$ of $S\bigstar A$ generated by all terms of the form $\delta_s a-\delta_t a$, where $s\leq t$ and $a\in\dom(\theta_s)$. The crossed product of $A$ by $S$ is the quotient of $S\bigstar A$ by $N$. (In the case of actions of groups this step is unnecessary.)
\end{enumerate}

In the $C^*$-algebraic case one then takes the $C^*$-envelope of the resulting algebra $S\bigstar A/N$. For further and more general reference, see \cite{MR3231479} in the $C^*$-algebraic case, and \cite{arxiv1804.00396,MR3943326} in the discrete case.

In this subsection we will deal with the first step described above, while the second one will be considered in Subsection \ref{subsec:quotients}. More precisely, we will now describe naïve crossed products as sectional algebras.

Let $A$ be a topological $R$-algebra, where $R$ is a unital topological ring. By a $\land$-preaction (or partial action, or global action) $\theta$ of an étale inverse semigroupoid $S$ on $A$, as an algebra, we shall mean a $\land$-preaction of $S$ on $A$, as in Definition \ref{def:action}, where $A$ is regarded as a semigroupoid under product, which also satisfies:
\begin{itemize}
    \item For every $s\in\mathcal{S}$, $\dom(\theta_s)$ is a sub-bimodule of $A$ and $\theta_s$ is a bimodule isomorphism.
\end{itemize}

Note that we already assume that, for every $v\in\mathcal{S}
$, $I(\theta,v)=\bigcup_{s\in\so^{-1}(v)}\dom(\theta_s)$ is a multiplicative ideal of $A$, and hence it is an ideal as an $R$-algebra. Similarly, $\dom(\theta_s)$ is a multiplicative ideal of $I(\theta,\so(s))$ for each $s\in\mathcal{S}$, so $\dom(\theta_s)$ is a subalgebra of $A$. As $\theta_s$ preserves products (being a semigroupoid homomorphism), it is actually an algebra isomorphism.

Suppose also that $\theta$ is continuous and associative. Consider the semidirect product semigroupoid $\mathcal{S}\ltimes A$ (not to be confused with crossed products of \cite{arxiv1804.00396,MR3943326,MR3231479}). The product is given by
\[(s,a)(t,b)=(st,\theta_{t^*}(a\theta_t(b))\qquad\text{whenever sensible}.\]
Let $\pi\colon\mathcal{S}\ltimes A\to\mathcal{S}$ be the projection $\pi(s,a)=s$. Then each section $\pi^{-1}(s)=\left\{s\right\}\times\dom(\theta_s)$ has an obvious $R$-bimodule structure coming from $\dom(\theta_s)$. In this manner, $\pi\colon \mathcal{S}\ltimes A\to\mathcal{S}$ is a continuous algebraic bundle. Since we assume that $\mathcal{S}$ is étale, we may consider the respective sectional algebra.
    
\begin{definition}
    The \emph{naïve crossed product} (induced by $\theta$) is the sectional algebra $S\bigstar A\defeq \mathcal{A}(\pi)$.
\end{definition}

We may simplify the description of the crossed product to a more usual approach as follows: A section $\alpha\colon\mathcal{S}\to\mathcal{S}\ltimes A$ of $\pi$ always has the form $\alpha=(\id_{\mathcal{S}},f)$ for some unique function $f\colon\mathcal{S}\to A$ satisfying $f(s)\in\dom(\theta_s)$ for all $s\in\mathcal{S}$. Moreover, the sets of continuity points of $\alpha$ and of $f$ coincide, as do the supports of $\alpha$ and of $f$.

Therefore, we may instead regard elements $\mathcal{S}\bigstar A$ as functions from $\mathcal{S}$ to $A$, and obtain the alternative description (up to natural isomorphism):

\begin{definition}
    The \emph{naïve crossed product} $\mathcal{S}\bigstar A$ induced by a continuous, associative $\land$-preaction $\theta$ of an étale, locally $R$-normal inverse semigroupoid $\mathcal{S}$ on a topological $R$-algebra $A$ is the $R$-algebra generated by all functions $f\colon\mathcal{S}\to A$ such that
    \begin{enumerate}[label=(\roman*)]\label{def:alternative.definition.for.naïve.crossed.product}
        \item\label{def:alternative.definition.for.naïve.crossed.product1} for all $s\in\mathcal{S}$, $f(s)\in\dom(\theta_s)$;
        \item\label{def:alternative.definition.for.naïve.crossed.product2} There exists an open and Hausdorff subset $V\subseteq \mathcal{S}$ such that $f=0$ outside $V$, and $\supp(f)\cap V$ is compact.
    \end{enumerate}
    As in Lemma \ref{lem:sectionalalgebraisgeneratedbybasis}, we may restrict the sets in \ref{def:alternative.definition.for.naïve.crossed.product2} to belonging to any basis of $\mathcal{S}$.
    
    The $R$-bimodule structure of $\mathcal{S}$ is the pointwise one, whereas the convolution product is given by
    \[(f\ast g)(s)=\sum_{xy=s}(\theta_{y^*}(f(x)\theta_y(g(y))).\]
    
    If $c\colon\mathcal{S}\to G$ is a continuous homomorphism from $\mathcal{S}$ to a discrete semigroupoid $G$, then $S\bigstar A$ is graded, with homogeneous component $(S\bigstar A)_g$ the set of function $f\colon\mathcal{S}\to A$ as above which vanish outside of $c^{-1}(g)$.
\end{definition}

\begin{remark}
    We use the convention that the elements of $\mathcal{S}$ should be thought of as functions, and thus act on the left on elements in their domains, so in some sense a function $f$ in a crossed product should be regarded as a ``continuous sum'' $f=\sum\delta_s f(s)$, where $\delta_s$ denotes the Dirac function at $s\in\mathcal{S}$. Thus for such an interpretation to be valid we need that $f(s)\in\dom(\theta_s)$.
    
    The reverse approach is more common: Regard $f$ as a continuous sum $f=\sum f(s)\delta_s$, so we should instead require that $f(s)\in\ran(\theta_s)$. This is the approach taken in \cite[p.\ 79]{MR1331978} for partial actions of discrete groups on $C^*$-algebras; on \cite[p.\ 9]{MR1456588} for actions of inverse semigroups on $C^*$-algebras; and \cite[Definition 2.5]{MR3743184} for partial actions of inverse semigroups on discrete algebras.
    
    Both of these approaches are equivalent, since we may use the $\land$-preaction $\theta$ itself to move elements from $\dom(\theta_s)$ to $\ran(\theta_s)$ and vice-versa. 
    
    More precisely, let us denote, just as in \cite[p.\ 3]{arxiv1804.00396}, by $\mathscr{L}(\theta)$ the span (as an $R$-bimodule) of all functions $f\colon\mathcal{S}\to A$, which are continuously and compactly supported on some open bisection of $\mathcal{S}$, and which satisfy $f(s)\in\ran(\theta_s)$ for all $s\in\mathcal{S}$, with pointwise $R$-bimodule structure, and product given by
    \[(f\ast g)(s)=\sum_{xy=s}\theta_x\left(\theta_{x^*}(f(x))g(y)\right).\]
    
    Then the map \[\phi\colon S\bigstar A\to\mathscr{L}(\theta),\qquad  \phi(f)(s)=\theta_s(f(s)),\]
    is promptly verified to be an algebra isomorphism. (We remark that it is necessary to use Equation \eqref{eq:landpreactionchangebygreater} to check that $\phi$ is multiplicative.)
\end{remark}

\subsection{Semigroupoid algebras as sectional algebras}\label{subsectiion:semigroupoidalgebra}

Here we give a simple adaptation of the usual notions of ``category algebras'' and ``semigroup algebra'' to the context of étale semigroupoids. The definition we use is based on that of \emph{Steinberg algebras} of ample groupoids, which were first considered in \cite{MR2565546} and \cite{MR3274831}, as ``algebraizations'' of groupoid $C^*$-algebras and as models for Leavitt path algebras, and also work as a ``laboratory'' (\cite{arxiv1806.04362}) for studying groupoid $C^*$-algebras.

Let $R$ be a unital topological ring, $A$ a topological $R$-algebra, and $\Gamma$ an étale, locally $R$-normal semigroupoid.

\begin{definition}
    The \emph{semigroupoid algebra} $A\Gamma$ is the algebra generated by all functions $f\colon\Gamma\to A$ for which there exists an open Hausdorff $V\subseteq\Lambda$ such that
\begin{enumerate}[label=(\roman*)]
    \item\label{def:steinbergalgebragenerator1} $f=0$ outside $V$;
    \item\label{def:steinbergalgebragenerator2} $f|_V\colon V\to A$ is continuous and compactly supported.
\end{enumerate}
    The bimodule structure of $A\Gamma$ is pointwise, and the product of $f,g\in A\Gamma$ is $(f\ast g)(\gamma)=\sum_{ab=\gamma}f(a)g(b)$.
    
    Just as in Lemma \ref{lem:sectionalalgebraisgeneratedbybasis}, it is sufficient to consider the open sets $V$ of \ref{def:steinbergalgebragenerator2} as belonging to a prescribed basis of $\Gamma$.
\end{definition}

If $\mathcal{S}$ is an étale, locally $R$-normal inverse semigroupoid, then the algebra $A\mathcal{S}$ is simply the crossed product under the trivial action of $\mathcal{S}$ on $A$: $\theta_s(a)=a$ for all $s\in\mathcal{S}$ and $a\in A$, and thus may be realized as a sectional algebra as in the previous subsection. The same is true for non-inverse semigroupoids, as we detail below.

Let $R$, $A$ and $\Gamma$ be as above. Consider the topological semigroupoid $A\times\Gamma$, with entrywise product $(a,\gamma)(b,\delta)=(ab,\gamma\delta)$ whenever $\gamma\delta$ is defined in $\Gamma$. Consider the projection $\pi\colon A\times\Gamma\to\Gamma$, $\pi(a,\gamma)=\gamma$. For every $\gamma\in\Gamma$, the preimage $\pi^{-1}(\gamma)=A\times\left\{\gamma\right\}$ has an obvious $R$-bimodule structure induced by $A$, and in this way, $\pi$ is an algebraic bundle. Then $A\Gamma$ is isomorphic to the sectional algebra $\mathcal{A}(\pi)$, just as in the previous subsection.

If $c\colon\Gamma\to G$ is a grading of $\Gamma$ over a discrete semigroupoid $G$, then the groupoid algebra $A\Gamma$ is graded with homogeneous components $(A\Gamma)_g$ consisting of all functions $f\in A\Gamma$ which vanish outside of $c^{-1}(g)$.

\section{\texorpdfstring{The compact-open topology of $\mathcal{A}(\pi)$}{The compact-open topology of A(π)}}

    Since we consider algebraic structures with compatible topologies, it is natural to ask whether the construction of the sectional algebras ends up in that same category, i.e., if we can make $A(\pi)$ into a topological algebra in a natural manner. In fact, this will be an issue that we will need to consider in Theorem \ref{thm:isomorphism.of.naive.crossed.product}.
    
    The study of topologies on spaces of functions is a classical problem of elementary topology, and is motivated by the question of exponentiability of a topological space $X$: A topological space is \emph{exponentiable} if for any other topological space $Y$, we may topologize the set $C(X,Y)$ of continuous functions from $X$ to $Y$ in such a manner that for any topological space $A$, the sets $C(A\times X,Y)$ and $C(A,C(X,Y))$ are in natural bijection. (This is the same as exponentiability of $X$ in the category of topological spaces.) The exponentiable spaces are precisely the \emph{core compact} ones, which in particular include all locally compact Hausdorff spaces. In this case, the topology on $C(X,Y)$ is the \emph{compact-open topology}, also known as the \emph{topology of compact convergence}. See \cite{MR2032837} (specially Theorem 5.3) for reference.
    
    \begin{definition}
        Let $X$ and $Y$ be topological spaces and let $\mathcal{A}$ be any collection of functions from $X$ to $Y$. The \emph{compact-open topology} on $\mathcal{A}$ is the topology generated by sets of the form
        \[[K,V]=\left\{f\in\mathcal{A}:f(K)\subseteq V\right\},\]
        where $K\subseteq X$ is compact and $V\subseteq Y$ is open. Note that these sets form a sub-basis for the compact-open topology, and not a basis.
    \end{definition}
    
    However, the compact-open topology does not make the sectional algebra $\mathcal{A}(\pi)$ of a continuous $R$-bundle into a topological algebra, even in the discrete (and in particular Hausdorff) setting.
    
    \begin{example}
        Consider the action $\alpha$ of $\mathbb{Z}$ on itself by addition: $\alpha_m(n)=m+n$, for all $m,n\in\mathbb{Z}$. Let $\Gamma=\mathbb{Z}\ltimes\mathbb{Z}$ be the (discrete) semidirect product groupoid. The product is given by $(m,n+p)(n,p)=(m+n,p)$. Let $R$ be any nontrivial unital discrete ring. We regard the groupoid algebra $R\Gamma$ as a sectional algebra as in Subsection \ref{subsectiion:semigroupoidalgebra}.
        
        Then the compact-open topology of $R\Gamma$ coincides with the topology of pointwise convergence. The sequence of functions $f_k=\delta_{(-k,k)}+\delta_{(k,0)}$, where $\delta_\ast$ denotes the Kronecker delta, converges to $0$, but $(f_kf_k)(0,0)=1$ for all $k$, so the product of $R\Gamma$ is not continuous.
    \end{example}
    
    In the example above, we used the fact that there are sequences of elements in $\Gamma^{(2)}$ converging to infinity, but whose product always lie in a prescribed compact set (namely, $(-k,k)(k,0)=(0,0)$).
    
    So in order to obtain continuity of the product map we should restrict this analysis to semigroupoids for which the multiplication map is proper. Recall that a continuous map $f\colon X\to Y$ between topological spaces is \emph{proper} if $f^{-1}(K)$ is compact whenever $K\subseteq Y$ is compact.
    
    \begin{example}
        Let $\theta$ be a continuous and associative $\land$-preaction of a finite inverse semigroupoid $S$ on a locally compact, globally Hausdorff space $X$, and assume that $\dom(\theta_s)$ is closed for every $s\in S$. Then the product map of $S\ltimes X$ is proper. 
        
        Indeed, let $p\colon S\ltimes X\to X$ be the second coordinate projection. Note that $(S\ltimes X)^{(2)}$ is a closed subset of $S\times X\times S\times X$. Let $\mu$ be the product map of $S\ltimes X$. If $K\subseteq S\ltimes X$ is compact then it is also closed, and so $\mu^{-1}(K)$ is a closed subset of the compact $\bigcup_{s,t\in S}\left\{s\right\}\times\theta_t(p(K)\cap\dom(\theta_t))\times\left\{t\right\}\times p(K)$, and hence $\mu^{-1}(K)$ is compact.
    \end{example}
    
    \begin{proposition}\label{prop:compact.open.topology.is.algebra.topology}
        Suppose that $\pi\colon\Lambda\to \Gamma$ is a continuous $R$-bundle, where $\Gamma$ is an étale, globally Hausdorff, locally $R$-normal semigroupoid with proper multiplication map. Then the compact-open topology makes $\mathcal{A}(\pi)$ into a topological algebra.
    \end{proposition}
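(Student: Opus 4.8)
The plan is to check that each structural map of the $R$-algebra $\mathcal{A}(\pi)$ is continuous for the compact-open topology: addition, the left and right $R$-actions, and the convolution product. Since a map into a space carrying a sub-basis is continuous as soon as preimages of sub-basic sets are open, in each case it suffices, given a sub-basic set $[K,V]$ (with $K\subseteq\Gamma$ compact and $V\subseteq\Lambda$ open) and a point $(\alpha_0,\beta_0)$ mapping into it, to exhibit compact-open neighbourhoods of $\alpha_0$ and $\beta_0$ that still map into $[K,V]$. The module operations are the easy part: for addition, for each $\gamma\in K$ the pair $(\alpha_0(\gamma),\beta_0(\gamma))$ lies in the open preimage of $V$ under the continuous fibred addition of $\Lambda$ (condition \ref{def:R.bundle.add.cont}), hence has a box neighbourhood $P_\gamma\times Q_\gamma$ whose fibred sum is contained in $V$; because $\Gamma$ is globally Hausdorff, every element of $\mathcal{A}(\pi)$ is continuous on all of $\Gamma$ (a generator $\alpha\in C_c(W,\pi)$ has $\supp(\alpha)$ compact and contained in $W$ and coincides with the continuous zero section off it, using \ref{def:R.bundle.0.cont}), so $\alpha_0^{-1}(P_\gamma)\cap\beta_0^{-1}(Q_\gamma)$ is an open neighbourhood of $\gamma$; local compactness of $\Gamma$ yields a compact neighbourhood $L_\gamma$ of $\gamma$ inside it, and from a finite subcover $\operatorname{int}(L_{\gamma_1}),\dots,\operatorname{int}(L_{\gamma_n})$ of $K$ one reads off the required neighbourhoods $\bigcap_k[K\cap L_{\gamma_k},P_{\gamma_k}]$ and $\bigcap_k[K\cap L_{\gamma_k},Q_{\gamma_k}]$. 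The identical template, using \ref{def:R.bundle.bimod.cont}, makes the two $R$-actions and negation continuous, so $\mathcal{A}(\pi)$ is already a topological $R$-bimodule; and it shows as well that multiplication by any fixed continuous function $\Gamma\to R$ is a continuous operation on $\mathcal{A}(\pi)$.

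For the convolution product I would use the properness hypothesis twice. Fixing $[K,V]$, the set $\mu^{-1}(K)\subseteq\Gamma^{(2)}$ is compact (here $\mu$ is the multiplication map of $\Gamma$), so its images $K_1,K_2$ under the coordinate projections $p_1,p_2$ are compact; cover $K_1$ by finitely many open bisections $U_1,\dots,U_p$ and $K_2$ by open bisections $W_1,\dots,W_q$, and by Lemma \ref{lem:partitionsofunity} pick $R$-valued partitions of unity $(h_i)$, $(g_j)$ with $\supp(h_i)\subseteq U_i$, $\supp(g_j)\subseteq W_j$, $\sum_ih_i=1$ on $K_1$ and $\sum_jg_j=1$ on $K_2$. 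For $\gamma\in K$, every factorization $ab=\gamma$ has $(a,b)\in\mu^{-1}(K)$, so $a\in K_1$ and $b\in K_2$; inserting the units $\sum_ih_i(a)=1$ and $\sum_jg_j(b)=1$ and using that each fibre product $\mu_{(a,b)}$ is an $R$-bimodule homomorphism gives the identity
\[(\alpha\ast\beta)\big|_K=\sum_{i,j}\bigl((h_i\alpha)\ast(\beta g_j)\bigr)\big|_K,\]
valid for all $\alpha,\beta\in\mathcal{A}(\pi)$. Consequently $\ast^{-1}([K,V])$ is the preimage of $[K,V]$ under $(\alpha,\beta)\mapsto\sum_{i,j}(h_i\alpha)\ast(\beta g_j)$; since $\alpha\mapsto h_i\alpha$ and $\beta\mapsto\beta g_j$ are continuous with values in $C_c(U_i,\pi)$ and $C_c(W_j,\pi)$, and addition is continuous, it is enough to prove that convolution $C_c(U,\pi)\times C_c(W,\pi)\to\mathcal{A}(\pi)$ is continuous whenever $U,W$ are open bisections.

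In this reduced situation $\mu$ restricts to a homeomorphism $\psi\colon(U\times W)\cap\Gamma^{(2)}\to UW$ (by Proposition \ref{prop:product.map.is.local.homeo}, together with injectivity, which is forced by the bisection property of $U$ and $W$); writing $\psi^{-1}=(u,w)$, one has $(\alpha\ast\beta)(\gamma)=\alpha(u(\gamma))\beta(w(\gamma))$ for $\gamma\in UW$ and $(\alpha\ast\beta)(\gamma)=0_\gamma$ for $\gamma\notin UW$. Now fix a sub-basic $[K',V']$ with $(\alpha_0\ast\beta_0)(K')\subseteq V'$. On $K'\setminus UW$ the section $\alpha\ast\beta$ is identically the zero section, which lies in $V'$ there for every $\alpha,\beta$ because $(\alpha_0\ast\beta_0)$ does and the zero section is continuous; on $K'\cap UW$, properness applied to $K'$ confines the arguments $u(\gamma)$ and $w(\gamma)$ to the compact sets $p_1(\mu^{-1}(K'))$ and $p_2(\mu^{-1}(K'))$, and then the same finite-subcover construction as in the first paragraph---now invoking continuity of the multiplication of $\Lambda$ and of $u,w$, and using compactness of $\supp(\alpha_0)\subseteq U$ and $\supp(\beta_0)\subseteq W$ and continuity of the zero section to handle the frontier of $UW$---produces compact-open neighbourhoods of $\alpha_0$ and $\beta_0$ keeping $\alpha\ast\beta$ inside $[K',V']$. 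This yields continuity of convolution and hence the proposition.

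The main obstacle is exactly the convolution step. The compact-open topology exercises no control over supports: a convergent net $\alpha_\iota\to\alpha_0$ can have supports growing without bound, so there is no uniform bound on the number of summands in $(\alpha_\iota\ast\beta_\iota)(\gamma)$. This is precisely what the properness hypothesis fixes---by forcing all factorizations of elements of a fixed compact set $K$ to lie in the single compact set $\mu^{-1}(K)$---and the examples preceding the statement show it cannot be dropped. A secondary delicate point is the behaviour at the frontier of the bisections, where one genuinely needs the zero section to be continuous, a property that (as Example \ref{ex:0.discontinuous} shows) is not implied by the other axioms of a continuous bundle.
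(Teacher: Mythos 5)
Your overall strategy is genuinely different from the paper's: the paper never reduces to bisections, but instead proves (Lemma \ref{lem:proper.multiplication.finite.preimage}) that properness of $\mu$ makes $\#\mu^{-1}$ finite and locally constant, and uses that counting to trap \emph{all} factorizations of points near a given $k\in K$ inside finitely many disjoint boxes, so that finitely many compact-open constraints control every summand of the convolution. Your first paragraph (module operations, global continuity of sections via the zero section, multiplication by fixed $R$-valued functions) is fine, and your reduction is also fine: properness forces every factorization of a point of $K$ into the compact $\mu^{-1}(K)$, so the partition-of-unity identity $(\alpha\ast\beta)|_K=\sum_{i,j}((h_i\alpha)\ast(\beta g_j))|_K$ does hold for \emph{all} $\alpha,\beta$ (using unitality and the bimodule axioms fibrewise), and it correctly reduces the problem to continuity of $\ast\colon C_c(U,\pi)\times C_c(W,\pi)\to\mathcal{A}(\pi)$ for bisections $U,W$.

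The gap is in that last step. After writing $(\alpha\ast\beta)(\gamma)=\alpha(u(\gamma))\beta(w(\gamma))$ on $UW$, you propose ``the same finite-subcover construction'' — but the set you would have to cover, $K'\cap UW$ (equivalently the set of interior pairs $(U\times W)\cap\mu^{-1}(K')$), is \emph{not} compact, so no finite subcover exists, and the compact-open topology gives no control of $\alpha,\beta$ off the finitely many compact sets you do use; invoking ``compactness of $\supp(\alpha_0)$ and continuity of the zero section'' does not repair this, because for $\gamma\in K'\cap UW$ with $u(\gamma)$ outside $\supp(\alpha_0)$ a nearby $\alpha$ need not vanish at $u(\gamma)$, and one is not entitled to assume $0_\gamma\in V'$ there. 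The argument can be saved, but it needs an idea not present in your sketch: run the covering over the compact set $E:=\overline{(U\times W)\cap\Gamma^{(2)}}\cap\mu^{-1}(K')$ (compact precisely by properness and Hausdorffness), and at a frontier pair $(a,b)\in E\setminus(U\times W)$ verify the pointwise step, i.e.\ that $\alpha_0(a)\beta_0(b)=0_{ab}$ lies in $V'$. This requires showing that such a point $ab\in K'$ admits \emph{no} factorization inside $U\times W$: if it did, the local homeomorphism $\mu$ would continue that factorization to the products of the interior pairs converging to $(a,b)$, and uniqueness of $(U\times W)$-factorizations (the bisection property) plus Hausdorffness of $\Gamma\times\Gamma$ would force $(a,b)$ itself to be interior, a contradiction; hence $(\alpha_0\ast\beta_0)(ab)=0_{ab}\in V'$ and the covering argument closes. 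This ``factorizations appearing or disappearing in the limit'' phenomenon is exactly what the paper's counting lemma neutralises uniformly; in your route it resurfaces at the frontier of $UW$ and must be addressed explicitly, which your proposal does not do.
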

    
    Before proving this proposition, we need a lemma which allows us to see $\Gamma^{(2)}$ locally as a product of $\Gamma$ and a finite set.
    
    \begin{lemma}\label{lem:proper.multiplication.finite.preimage}
        Let $X$ and $Y$ be Hausdorff and locally compact spaces, and $\mu\colon X\to Y$ a local homeomorphism. Then the following are equivalent:
        \begin{enumerate}[label=(\arabic*)]
            \item\label{lem:proper.multiplication.finite.preimage1} $\mu$ is a proper map.
            \item\label{lem:proper.multiplication.finite.preimage2} For every $k\in\Gamma$, the set $\mu^{-1}(k)$ is finite and the function
            \[\#\mu^{-1}\colon\Gamma\to\mathbb{N},\qquad k\mapsto\#\mu^{-1}(k)\]
            is continuous.
        \end{enumerate}
    \end{lemma}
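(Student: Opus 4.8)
The statement is a local-to-global characterization of properness for local homeomorphisms between Hausdorff, locally compact spaces, so I would prove it by analyzing the fibers $\mu^{-1}(k)$ and using the local homeomorphism property to produce nice ``product-like'' neighbourhoods. Let me sketch both implications.

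\textbf{From \ref{lem:proper.multiplication.finite.preimage1} to \ref{lem:proper.multiplication.finite.preimage2}.} If $\mu$ is proper, then each fiber $\mu^{-1}(k)$ is compact; since $\mu$ is a local homeomorphism, $\mu^{-1}(k)$ is also discrete, hence finite, say $\mu^{-1}(k)=\{x_1,\ldots,x_n\}$. Because $X$ is Hausdorff we may choose pairwise disjoint open sets $U_i\ni x_i$ on which $\mu$ restricts to a homeomorphism onto an open neighbourhood of $k$; shrinking, we may assume $\mu(U_i)=W$ for a common open $W\ni k$. The key point is that properness forces $\mu^{-1}(W')\subseteq\bigcup_i U_i$ for some smaller open $W'\ni k$: otherwise one could extract, by properness (a proper map to a locally compact Hausdorff space is closed), a point of $\mu^{-1}(k)$ lying outside $\bigcup_i U_i$, a contradiction. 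Concretely, $\mu\bigl(X\setminus\bigcup_i U_i\bigr)$ is closed (image of a closed set under a closed map) and misses $k$, so its complement $W'$ works. Then $\#\mu^{-1}=n$ identically on $W'$, giving local constancy and hence continuity of $\#\mu^{-1}$.

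\textbf{From \ref{lem:proper.multiplication.finite.preimage2} to \ref{lem:proper.multiplication.finite.preimage1}.} Suppose every fiber is finite and $\#\mu^{-1}$ is continuous, i.e.\ locally constant (as $\mathbb{N}$ is discrete). Fix a compact $K\subseteq Y$; I want $\mu^{-1}(K)$ compact. Cover $K$ by finitely many open sets $W_1,\ldots,W_m$ on each of which $\#\mu^{-1}$ is constant, equal to $n_j$ on $W_j$, and on each $W_j$ (after shrinking, using the local homeomorphism property and Hausdorffness) $\mu^{-1}(W_j)$ decomposes as a disjoint union of $n_j$ open sets each mapped homeomorphically onto $W_j$ — this ``sheeted cover'' structure is exactly what local constancy of the fiber count buys. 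Pass to a compact refinement $K=\bigcup_j K_j$ with $K_j\subseteq W_j$ compact (possible since $K$ is compact Hausdorff, hence normal/regular, allowing a shrinking of the finite cover). Then $\mu^{-1}(K_j)$ is a finite disjoint union of homeomorphic copies of $K_j$, hence compact, and $\mu^{-1}(K)=\bigcup_j\mu^{-1}(K_j)$ is compact. Therefore $\mu$ is proper.

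\textbf{Expected main obstacle.} The delicate step is establishing, in both directions, that near a point $k\in Y$ the preimage genuinely looks like $\#\mu^{-1}(k)$ disjoint homeomorphic sheets over a single neighbourhood $W'\ni k$ — i.e.\ that the ``local sections from the local homeomorphism property'' can be chosen with a \emph{common} domain and that nothing from the rest of $X$ sneaks into $\mu^{-1}(W')$. For the forward direction this is where properness (in the guise of being a closed map onto a locally compact Hausdorff space) is essential, and one must be careful that the argument genuinely uses properness rather than just finite fibers. For the converse it is the local constancy of $\#\mu^{-1}$ that rules out sheets ``appearing'' or ``colliding'' as one moves in $Y$; making this precise requires a small separation argument in $X$ (using Hausdorffness to separate the finitely many preimages of a point and then propagating the separation to a neighbourhood). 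I would isolate this as a sub-claim: \emph{a local homeomorphism $\mu$ between Hausdorff locally compact spaces with locally constant fiber-count is, locally over $Y$, a trivial covering map onto its image.} Once that is in hand, both implications follow by routine compactness arguments.
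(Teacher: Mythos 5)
Your proof is correct and follows essentially the same route as the paper's: in both directions the key step is to place pairwise disjoint slices of the fiber over a common neighbourhood of a point and then use properness (your appeal to ``proper into a locally compact Hausdorff space implies closed'' replaces the paper's subnet argument) or the local constancy of $\#\mu^{-1}$ to rule out extra preimages, followed by a routine compactness argument for the converse. The only slip is cosmetic: with $W'$ defined as the complement of $\mu\bigl(X\setminus\bigcup_i U_i\bigr)$, the identity $\#\mu^{-1}\equiv n$ holds on $W'\cap W$ rather than on all of $W'$ (points of $W'$ outside $W$ may have fewer than $n$ preimages), which is fixed by simply replacing $W'$ with $W'\cap W$.
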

    \begin{proof}
        \uline{\ref{lem:proper.multiplication.finite.preimage1}$\Rightarrow$\ref{lem:proper.multiplication.finite.preimage2}}: Suppose that $\mu$ is proper. Given $y\in Y$, the set $\mu^{-1}(y)$ is compact and discrete in the subspace topology of $X$ , because $\mu$ is a local homeomorphism, and thus it is finite. This proves that the function $\#\mu^{-1}$ is well-defined.
        
        To prove that $\#\mu^{-1}$ is continuous, fix $y\in Y$ and let $n=\#\mu^{-1}(y)$. Enumerate $\mu^{-1}(y)=\left\{x_1,\ldots,x_n\right\}$. Since $X$ is Hausdorff and $\mu$ is a local homeomorphism, we may take a family of pairwise disjoint neighbouhoods $U_i$ of each $x_i$ such that $\mu$ is injective on each $U_i$.
        
        Consider the neighbourhood $V\defeq\bigcap_{i=1}^n\mu(U_i)$ of $y$. Substituting each $U_i$ by $U_i\cap\mu^{-1}(V)$, we may assume that $\mu(U_i)=V$ for each $i$. Let us prove that $\#\mu^{-1}(p)\geq n$ for each $p\in V$. Indeed, for each $i$, the map $\mu$ restricts to a homeomorphism from $U_i$ to $V$, so in particular $\#(\mu^{-1}(p)\cap U_i)=1$. Since all $U_i$ are disjoint then $\#\mu^{-1}(p)\geq n$.
        
        We may now prove that $\#\mu^{-1}=n$ on a neighbourhood of $y$. If this was not the case, then the previous paragraph implies that there exists a converging net $p_i\to y$ and elements $a_i\in\mu^{-1}(p_i)\setminus\bigcup_{i=1}^nU_i$. As $Y$ is locally compact and $\mu$ is proper, we may pass to a subnet if necessary and assume that the net $(a_i)_i$ converges in $X$, say $a_i\to a$. Howeover, all $a_i$ belong to the closed set $\Gamma^{(2)}\setminus\bigcup_{i=1}^nU_i$, so $a$ does so as well. On the other hand $\mu(a)=\lim_i\mu(a_i)=\lim_i p_i=y$, so $a$ belongs to $\mu^{-1}(y)$, which is contained in $\bigcup_{i=1}^n U_i$, a contradiction.
        
        Therefore, $\#\mu^{-1}$ is locally constant, i.e., continuous.
        
        \uline{\ref{lem:proper.multiplication.finite.preimage2}$\Rightarrow$\ref{lem:proper.multiplication.finite.preimage1}}: For the converse direction, suppose $\#\mu^{-1}$ is well-defined and continuous. Let $K$ be a compact subset of $Y$. We need to prove that $\mu^{-1}(K)$ is compact. Since the function $\#\mu^{-1}$ has finite image when restricted to $K$, we may decompose $K$ into the sets $\left\{k\in K:\#\mu^{-1}(k)=n\right\}$, where $0\leq n\leq\max\left\{\#\mu^{-1}(k):k\in K\right\}$, and assume that $\#\mu^{-1}$ has a constant value $K$. Call this value $n$.
        
        Given $k\in K$, we enumerate $\mu^{-1}(k)=\left\{a_1,\ldots,a_n\right\}$, and take disjoint compact neighbourhoods $A(1,k),\ldots,A(n,k)$ of each $a_i$, respectively. As $\mu$ is a local homeomorphism, we may assume that $\mu(A(i,k))=\mu(A(j,k))$ for each $i$ and $j$. The sets $\left\{\mu(A(1,k)):k\in K\right\}$ form a neighbourhood cover of $K$, so we extract a finite subcover $\left\{\mu(A(1,k_1)),\ldots,\mu(A(1,k_N))\right\}$ of $K$. We are finished by proving that $\mu^{-1}(K)\subseteq\bigcup_{j=1}^N\bigcup_{i=1}^n A(i,k_j)$ (because the $A(i,k)$ were taken compact).
        
        Given $a\in\mu^{-1}(K)$, let $k=\mu(a)$. Choose $j$ such that $k\in\mu(A(1,k_j))$. For each $i\in\left\{1,\ldots,n\right\}$, we have $k\in\mu(A(1,k_j))=\mu(A(i,k_j))$, so $k=\mu(a_i)$ for some $a_i\in A(i,k_j)$. As the sets $A(i,k_j)$ are pairwise disjoint and $\#\mu^{-1}(k)=n$, then these elements $a_1,\ldots,a_n$ comprise all of $\mu^{-1}(a_i)$, and in particular $a=a_i\in A(i,k_j)$ for some $i$.\qedhere
    \end{proof}
    
    \begin{proof}[Proof of Proposition \ref{prop:compact.open.topology.is.algebra.topology}]
        The verification that addition and scalar multiplication of $\mathcal{A}(\pi)$ are continuous with respect to the compact-open topology is straightforward, since these operations are pointwise and all elements of $\mathcal{A}(\pi)$ are continuous (because $\Gamma$ is Hausdorff). In fact,
        if $\Gamma$ were simply a topological space (i.e., a unit groupoid), then the convolution product would also be pointwise, and the proof of its continuity would be essentially the same as that of the continuity of addition. The problem in the general case is that the convolution product is defined in terms of a sum, namely
        \[(\alpha\ast\beta)(\gamma)=\sum_{ab=\gamma}\alpha(a)\beta(b),\]
        and the number of nonzero terms in this sum depends on the argument $\gamma$. This is where Lemma \ref{lem:proper.multiplication.finite.preimage} will come into play, as it allows us to control how many nonzero terms appear in this sum.
        
        Denote by $\mu\colon\Gamma^{(2)}\to\Gamma$ the product of $\Gamma$. Then $\mu$ is proper, by hypothesis, and a local homeomorphism (Proposition \ref{prop:product.map.is.local.homeo}). 
        
        Suppose that $\alpha,\beta\in\mathcal{A}(\pi)$, and take a sub-basic neighbourhood $[K,V]$ of $\alpha\beta$, where $K\subseteq\Gamma$ is Hausdorff and $V\subseteq\Lambda$ is open. This means that for every $k\in K$ we have $\sum_{(a,b)\in\mu^{-1}(k)}\alpha(a)\beta(b)\in V$. The set $\mu^{-1}(k)$ is finite, so we may enumerate it as $\mu^{-1}(k)=\left\{(a_1,b_1),\ldots,(a_{N(k)},b_{N(k)})\right\}$. As addition and multiplication of $\Lambda$ are continuous, there exist neighbourhoods $A(k,j)$ and $B(k,j)$ of each $\alpha(a_j)$ and $\beta(b_j)$, respectively, such that $\sum_{j=1}^{N(k)}A(k,j)B(k,j)\subseteq V$. As $\alpha$ and $\beta$ are continuous, take compact neighbourhoods $P(k,j)$ and $Q(k,j)$ of $a_j$ and $b_j$ such that $\alpha(P(k,j))\subseteq A(k,j)$ and $\beta(Q(k,j))\subseteq B(k,j)$. Taking smaller neighbourhoods if necessary, we may assume, by Lemma \ref{lem:proper.multiplication.finite.preimage}, that
        \begin{enumerate}[label=(\Roman*)]
        \item\label{prop:compact.open.topology.is.algebra.topologyproofinj} $\#\mu^{-1}$ is constant on $P(k,j)Q(k,j)$ for each $j$;
        \item\label{prop:compact.open.topology.is.algebra.topologyproofdis} The sets $P(k,j)\times Q(k,j)$ are pairwise disjoint.
        \end{enumerate}
        
        The family $\left\{\bigcap_{j=1}^{N(k)}P(k,j)Q(k,j):k\in K\right\}$ is a neighbourhood cover of $K$, so take a finite subcover $\left\{\bigcap_{j=1}^{N(k_1)}P(k_1,j)Q(k_1,j),\ldots,\bigcap_{j=1}^{N(k_n)}P(k_n,j)Q(k_n,j)\right\}$. Consider the neighbourhoods
        \[A=\bigcap_{i=1}^n\bigcap_{j=1}^{N(k_i)}[P(k_i,j),A(k_i,j)]\quad\text{and}\quad B=\bigcap_{i=1}^n\bigcap_{j=1}^{N(k_i)}[Q(k_i,j),B(k_i,j)]\]
        of $\alpha$ and $\beta$, respectively. We prove that $AB\subseteq[K,V]$. Let $\alpha'\in A$ and $\beta'\in B$. Given $k\in K$, choose $i$ such that $k\in\bigcap_{j=1}^{N(k_i)}P(k_i,j)Q(k_i,j)$. This means that for each $j=1,\ldots,N(k_i)$, there is an element $(a_j,b_j)\in\left( P(k_i,j)\times Q(k_i,j)\right)\cap\Gamma^{(2)}$ such that $a_jb_j=k$. By \ref{prop:compact.open.topology.is.algebra.topologyproofdis}, these pairs $(a_j,b_j)$ are pairwise distinct, i.e., there are $N(k_i)$ of them. Since $\#\mu^{-1}$ is constant on $\bigcap_{j=1}^{N(k_i)}P(k,j)Q(k,j)$ (by \ref{prop:compact.open.topology.is.algebra.topologyproofinj}), then these elements $(a_j,b_j)$ comprise all of $\mu^{-1}(k)$. Therefore,
        \[(\alpha'\beta')(k)=\sum_{j=1}^{N(k_i)} \alpha'(a_j)\beta'(b_j)\in\sum_{j=1}^{N(k_i)}\alpha'(P(k_i,j))\beta'(Q(k_i,j))\subseteq \sum_{j=1}^{N(k_i)}A(k_i,j)B(k_i,j)\subseteq V.\]
        
        We conclude that $\alpha'\beta'\in [K,V]$, as desired.\qedhere
    \end{proof}
    
\section{Isomorphism theorems}

\subsection{Tensor products}

Tensor products are the algebraic counterpart of products of topological spaces. For example, if $X$ and $Y$ are locally compact Hausdorff spaces, then the $C^*$-algebra $C_0(X\times Y)$ is isomorphic to the $C^*$-algebraic tensor product $C_0(X)\overline{\otimes}C_0(Y)$. Recently, Rigby proved an analogous result in the setting of Steinberg algebras (see \cite[Theorem 4.3]{arxiv1811.10897}): If $R$ is a discrete unital commutative ring and $\mathcal{G}$ and $\mathcal{H}$ are ample Hausdorff groupoids, then the Steinberg algebra $R(\mathcal{G}\times\mathcal{H})$ is isomorphic to the (algebraic) tensor product $R\mathcal{G}\otimes R\mathcal{H}$. Analogous results hold in the setting of groupoid $C^*$-algebras (this is folklore; see \cite[Lemma 2.10]{arxiv1804.00967} for a proof).

Rigby's proof uses a slight generalization of the universal property of the Steinberg algebra $R\mathcal{G}$ of an ample Hausdorff groupoid $\mathcal{G}$, which states, in simple terms, that $R\mathcal{G}$ is universal for representations of the semigroup $\mathbf{KB}(\mathcal{G})$ of compact-open bisections of $\mathcal{G}$ as a Boolean inverse semigroup. See \cite[Theorem 3.10]{MR3274831} for details. The proof relies much on the Hausdorff property of $\mathcal{G}$, which allows one to take differences of compact-open bisections - this is a concrete realization of the fact that $\mathcal{G}$ is Hausdorff if and only if $\mathbf{KB}(\mathcal{G})$ is Boolean in the sense of \cite[p.140]{MR3077869}, as proven in \cite[Proposition 3.7]{MR2565546} (see also Proposition 3.23 and Theorem 3.25 of \cite{MR3077869}).

Since we do not have any analogue of such an universal property for sectional algebras we just proceed with a direct proof of our result. Although this approach requires longer and more involved computations, it has the advantage of allowing us to drop some of the ``Hausdorff'' requirements on our semigroupoids.

In the case of $C^*$-algebras (which are our model for general topological algebras), ``$C^*$-tensor products'' are defined as completions of algebraic tensor products with respect to certain norms. More specifically, if $X$ and $Y$ are compact Hausdorff topological spaces, the canonical map
\[T\colon C(X)\otimes C(Y)\to C(X\times Y),\qquad T(f\otimes g)(x,y)=f(x)g(y)\]
is not surjective, but it is injective with dense image.

Since sectional algebras do not have (in general) any appropriate topology, and much less their tensor products over arbitrary topological rings, it is not sensible to consider any kind of completion, and thus we will need to consider only purely algebraic tensor products.

Suppose that $\pi\colon\Lambda\to\Gamma$ is a continuous $R$-bundle and $\mathcal{E}$ is an étale semigroupoid. We consider the new $R$-bundle
\[\pi\times\mathcal{E}\colon\Lambda\times\mathcal{E}\to\Gamma\times\mathcal{E},\qquad(\pi\times\mathcal{E})(\lambda,e)=(\pi(\lambda),e),\]
where each fiber $(\pi\times\mathcal{E})^{-1}(\gamma,e)=\pi^{-1}(\gamma)\times\left\{e\right\}$ has the $R$-bimodule structure induced by $\pi^{-1}(\gamma)$.

We will seek an $R$-bimodule isomorphism between $\mathcal{A}(\pi\times\mathcal{E})$ and $\mathcal{A}(\pi)\tensor[_R]{\otimes}{_R}R\mathcal{E}$, where $R\mathcal{E}$ is the semigroupoid algebra of $\mathcal{E}$ as in Subsection \ref{subsectiion:semigroupoidalgebra}.

Let us finish this preliminary discussion with some considerations about the ``symmetry'' of our modules, which relates to the possible algebra structure of a tensor product (this problem was already addressed in the Introduction). Suppose that $\pi\colon\Lambda\to\Gamma$ is a continuous $R$-bundle, where $R$ is a unital topological ring. If all of the $R$-bimodules $\pi^{-1}(\gamma)$ are symmetric, then $\mathcal{A}(\pi)$ is also symmetric. In particular, if $R$ is a commutative ring and $\mathcal{E}$ is an étale semigroupoid then the semigroupoid algebra $R\mathcal{E}$ is a symmetric $R$-bimodule, since it is, by definition, the sectional algebra of the trivial (coordinate projection) bundle $R\times\mathcal{E}\to\mathcal{E}$, whose fibers have the $R$-bimodule structure coming from $R$.

The following elementary lemma will be used several times during the proof of Theorem \ref{thm:isomorphismtensorproductbundle}.

\begin{lemma}\label{lem:compositionislocalhomeoiffrightmostislocalhomeo}
    Suppose that $X,Y,Z$ are topological spaces, $f\colon X\to Y$ is continuous and $g\colon Y\to Z$ is a local homeomorphism. Then $g\circ f$ is a local homeomorphism if and only if $f$ is a local homeomorphism.
\end{lemma}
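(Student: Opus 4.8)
The plan is to prove the two implications separately; the forward direction is the standard fact that local homeomorphisms are closed under composition, and the reverse direction is where the hypothesis that $g$ is a local homeomorphism is genuinely used.

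For the forward direction, assume $f$ is a local homeomorphism. Given $x\in X$, I would choose an open neighbourhood $U$ of $x$ such that $f|_U$ is a homeomorphism onto the open set $f(U)\subseteq Y$, together with an open neighbourhood $V$ of $f(x)$, which we may take to lie inside $f(U)$ (shrinking is harmless as both sets are open), on which $g$ restricts to a homeomorphism onto the open set $g(V)\subseteq Z$. Then $W\defeq(f|_U)^{-1}(V)$ is an open neighbourhood of $x$, and $(g\circ f)|_W=(g|_V)\circ(f|_W)$ is a composition of homeomorphisms with open image $g(V)$; hence $g\circ f$ is a local homeomorphism.

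For the converse, assume $h\defeq g\circ f$ is a local homeomorphism, and fix $x\in X$. I would pick an open neighbourhood $U$ of $x$ with $h|_U$ a homeomorphism onto the open set $h(U)\subseteq Z$, and an open neighbourhood $V$ of $f(x)$ in $Y$ with $g|_V$ a homeomorphism onto the open set $g(V)\subseteq Z$; then set $W\defeq U\cap f^{-1}(V)$, an open neighbourhood of $x$. The goal is to show that $f|_W$ is a homeomorphism onto an open subset of $Y$. Injectivity follows because $f(w_1)=f(w_2)$ forces $h(w_1)=h(w_2)$, hence $w_1=w_2$ since $h|_W$ is injective. For the image: $f(W)\subseteq V$ gives $g(f(W))=h(W)$, which is open in $Z$ (as $h$ is an open map and $W$ is open) and contained in $g(V)$; since $g|_V$ is a homeomorphism, $f(W)=(g|_V)^{-1}(h(W))$ is open in $V$ and therefore open in $Y$. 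Finally, for every $y\in f(W)$ one checks $(f|_W)^{-1}(y)=(h|_W)^{-1}(g(y))$ -- using injectivity of $g|_V$ to identify this preimage with $y$ -- so $(f|_W)^{-1}=(h|_W)^{-1}\circ(g|_{f(W)})$ is continuous, and $f|_W$ is the desired homeomorphism onto an open set.

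The only delicate point is the reverse direction: it is essential to use that $g$ is locally \emph{injective}, not merely open or continuous, both to obtain injectivity of $f|_W$ and to express $(f|_W)^{-1}$ as $(h|_W)^{-1}\circ(g|_{f(W)})$; once $g$ has been ``cancelled on the left'' in this way, the remainder is routine manipulation of restrictions of homeomorphisms to open sets, and I expect no further obstacle.
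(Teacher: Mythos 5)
Your argument is correct and complete in both directions; the paper itself states this lemma without proof (labelling it ``elementary''), so there is no authorial argument to diverge from, and what you have written is precisely the standard verification one would supply. The one genuinely non-routine step -- in the converse direction, using local injectivity of $g$ to cancel it on the left, i.e.\ identifying $f(W)$ with $(g|_V)^{-1}(h(W))$ and $(f|_W)^{-1}$ with $(h|_W)^{-1}\circ(g|_{f(W)})$ -- is handled correctly, and your choice of $W=U\cap f^{-1}(V)$ makes all the restrictions compatible. No gaps.
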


As previously mentioned, one of the main motivations for this work are the Steinberg algebras considered in \cite{arxiv1804.00396,MR3743184,MR3943326,MR3848066,MR3372123,arxiv1806.04362,MR3274831,arxiv1811.10897,MR2565546,MR3270778,MR3873946}, which are simply the (semi)groupoid algebras $R\mathcal{G}$ in the specific case where $\mathcal{G}$ is an ample groupoid and $R$ is a discrete ring. By definition, $R\mathcal{G}$ is the sectional algebra of the coordinate projection bundle $\pi\colon R\times\mathcal{G}\to\mathcal{G}$. Note that $R$ is discrete if and only if $\pi$ is a local homeomorphism.

So we should regard the étale bundles as the ``discrete ones'' ones, which will be of special interest in further results as well. Moreover, we may specify:

\begin{proposition}\label{prop:etalebundle}
    Let $\pi\colon\Lambda\to\Gamma$ be a continuous rigid semigroupoid homomorphism, where $\Gamma$ is an étale semigroupoid. Consider the graph structure on $\Lambda$ given by $\so_\Lambda=\so_{\Gamma}\circ\pi$ and $\ra_\Lambda=\ra_\Gamma\circ\pi$, which is compatible with the semigroupoid structure of $\Lambda$ because $\pi$ is rigid. Then the following are equivalent:
    \begin{enumerate}[label=(\arabic*)]
        \item $\Lambda$ is étale (with the graph structure $(\so_\Lambda,\ra_\Lambda)$);
        \item $\pi$ is a local homeomorphism.
    \end{enumerate}
\end{proposition}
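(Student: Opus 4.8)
The plan is to prove the equivalence of the two conditions in Proposition \ref{prop:etalebundle} by analyzing how the étale structure of $\Lambda$ interacts with that of $\Gamma$ through the maps $\so_\Lambda,\ra_\Lambda$ and $\pi$. The key observation is that, by construction, $\so_\Lambda=\so_\Gamma\circ\pi$ and $\ra_\Lambda=\ra_\Gamma\circ\pi$, where $\so_\Gamma,\ra_\Gamma$ are local homeomorphisms (since $\Gamma$ is étale). Combined with Lemma \ref{lem:compositionislocalhomeoiffrightmostislocalhomeo}, this immediately gives that $\so_\Lambda$ (resp.\ $\ra_\Lambda$) is a local homeomorphism if and only if $\pi$ is a local homeomorphism. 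That handles both directions of the local-homeomorphism part of ``étale'' at once, so the remaining work is about the Hausdorff and local compactness requirements on the vertex space.

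For the direction $(2)\Rightarrow(1)$: suppose $\pi$ is a local homeomorphism. The graph structure on $\Lambda$ uses the vertex set $\Lambda^{(0)}$, which we should take to be $\Gamma^{(0)}$ itself (or a copy thereof) — indeed, since $\so_\Lambda=\so_\Gamma\circ\pi$, the natural vertex space for $\Lambda$ is $\Gamma^{(0)}$, which is already locally compact and globally Hausdorff because $\Gamma$ is étale. Then $\so_\Lambda$ and $\ra_\Lambda$ are local homeomorphisms by the remark above, the inversion (if relevant) is not part of the hypothesis, and so $\Lambda$ satisfies the definition of an étale semigroupoid. One should double-check that the product map continuity of $\Lambda$ is inherited — but this is part of the assumption that $\pi$ is a continuous bundle (the product on each $\Lambda\times_\Gamma\Lambda$ is continuous), so $\Lambda$ is a topological semigroupoid; combined with the local-homeomorphism source and range, it is étale.

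For the direction $(1)\Rightarrow(2)$: suppose $\Lambda$ is étale with the graph structure $(\so_\Lambda,\ra_\Lambda)$. Then $\so_\Lambda=\so_\Gamma\circ\pi$ is a local homeomorphism. Since $\so_\Gamma$ is itself a local homeomorphism, Lemma \ref{lem:compositionislocalhomeoiffrightmostislocalhomeo} (with $X=\Lambda$, $Y=\Gamma$, $Z=\Gamma^{(0)}$, $f=\pi$, $g=\so_\Gamma$) forces $\pi$ to be a local homeomorphism. This is the cleanest step and I expect no obstacle here.

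The main subtlety I anticipate is purely bookkeeping: making precise what ``$\Lambda$ is étale with the graph structure $(\so_\Lambda,\ra_\Lambda)$'' means at the level of the vertex space, since the definition of étale semigroupoid also imposes conditions ($\Lambda^{(0)}$ locally compact, globally Hausdorff) that live on the vertices, not on $\Lambda$ itself. The resolution is that the natural vertex space here is $\Gamma^{(0)}$, which already satisfies these hypotheses, so they are automatic and contribute nothing; alternatively, one notes that local homeomorphisms pull back these properties appropriately from $\Gamma^{(0)}$ to the image of $\so_\Lambda$. Once that is stated, both implications reduce to a single application of Lemma \ref{lem:compositionislocalhomeoiffrightmostislocalhomeo} together with the fact that $\Gamma$ étale means $\so_\Gamma,\ra_\Gamma$ are local homeomorphisms with suitable vertex space, and the continuity/properness of the product of $\Lambda$ is inherited from the bundle structure.
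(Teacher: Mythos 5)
Your proof is correct and follows essentially the same route as the paper: a single application of Lemma \ref{lem:compositionislocalhomeoiffrightmostislocalhomeo} to $\so_\Lambda=\so_\Gamma\circ\pi$ and $\ra_\Lambda=\ra_\Gamma\circ\pi$ gives both directions at once, with the vertex-space conditions automatic since $\Lambda^{(0)}$ is taken to be (a copy of) $\Gamma^{(0)}$. Your extra bookkeeping about the vertex space and product continuity is harmless and consistent with the paper's implicit conventions (note only that the hypothesis is a continuous rigid homomorphism of topological semigroupoids, not necessarily a bundle).
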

\begin{proof}
    By Lemma \ref{lem:compositionislocalhomeoiffrightmostislocalhomeo}, the source map of $\Lambda$, $\so_\Lambda=\so_\Gamma\circ\pi$, is a local homeomorphism if and only if $\pi$ is a local homeomorphism, and similarly for $\ra_\Lambda$. This proves the equivalence (1)$\iff$(2).\qedhere
\end{proof}

Note that if $R$ is a (possibly non-commutative) unital discrete ring then, as long as $0\neq 1$ in $R$, the locally $R$-unital étale semigroupoids are precisely the ample (i.e., zero-dimensional) ones. Our first main theorem follows below.

\begin{theorem}\label{thm:isomorphismtensorproductbundle}
    Let $R$ be a unital topological ring and $\pi\colon\Lambda\to\Gamma$ be a continuous $R$-bundle. Assume that $\Gamma$ and $\mathcal{E}$ are locally $R$-normal étale semigroupoids. Then there exists a unique $R$-bimodule homorphism $T\colon\mathcal{A}(\pi)\tensor[_R]{\otimes}{_R} R\mathcal{E}\to\mathcal{A}(\pi\times\mathcal{E})$ given on pure tensors by
    \[T(\alpha\otimes f)(\gamma,e)=(\alpha(\gamma)f(e),e)\ntag\label{eq:thm:isomorphismtensorproductbundle}\]
    for all $\alpha\in\mathcal{A}(\pi)$, $R\mathcal{E}$ and $(\gamma,\pi)\in\Gamma\times\mathcal{E}$.
    \begin{enumerate}[label=(\alph*)]
        \item\label{thm:isomorphismtensorproductbundlesym} If $R$ is commutative and $\pi^{-1}(\gamma)$ is symmetric for all $\gamma\in\Gamma$, then $T$ is an $R$-algebra homomorphism.
        \item If $\Gamma$ and $\mathcal{E}$ are ample and $\pi$ is a local homeomorphism (i.e., $\Lambda$ is étale as in Proposition \ref{prop:etalebundle}), then $T$ is surjective.
        \item If $R$ is discrete and $\mathcal{E}$ is ample and Hausdorff, then $T$ is injective.
        \item If $R$ is a field, then $T$ is injective.
    \end{enumerate}
\end{theorem}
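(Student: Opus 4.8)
The plan is to construct $T$ by hand and then dispatch the four assertions, the substantial one being the surjectivity in part (b). For the construction I would first define the auxiliary map $B\colon\mathcal{A}(\pi)\times R\mathcal{E}\to\mathcal{A}(\pi\times\mathcal{E})$ by $B(\alpha,f)(\gamma,e)=(\alpha(\gamma)f(e),e)$ and check it actually lands in $\mathcal{A}(\pi\times\mathcal{E})$: by additivity in each variable this reduces to $\alpha\in C_c(V,\pi)$ and $f$ supported on an open Hausdorff $W\subseteq\mathcal{E}$, and then $B(\alpha,f)$ is a section of $\pi\times\mathcal{E}$ (right scalar multiplication preserves fibres, so $\pi(\alpha(\gamma)f(e))=\gamma$), is continuous on the open Hausdorff set $V\times W$ as a composite involving the continuous scalar action, and is compactly supported there. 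Since $B(\alpha r,f)=B(\alpha,rf)$ by the middle bimodule identity in each fibre, and $B$ is biadditive and respects the outer actions, it factors uniquely through the tensor product as an $R$-bimodule homomorphism $T$ satisfying \eqref{eq:thm:isomorphismtensorproductbundle}, uniqueness holding because pure tensors generate. For part (a), when $R$ is commutative and each $\pi^{-1}(\gamma)$ is symmetric, $\mathcal{A}(\pi)$, $R\mathcal{E}$ and $\mathcal{A}(\pi\times\mathcal{E})$ are symmetric $R$-algebras, so the tensor product carries the usual algebra structure and it suffices to check multiplicativity of $T$ on pure tensors: expanding both $T\bigl((\alpha_1\ast\alpha_2)\otimes(f_1\ast f_2)\bigr)(\gamma,e)$ and $\bigl(T(\alpha_1\otimes f_1)\ast T(\alpha_2\otimes f_2)\bigr)(\gamma,e)$ — using that composable pairs in $\Gamma\times\mathcal{E}$ are exactly pairs of composable pairs and that the multiplication of $\Lambda\times\mathcal{E}$ is entrywise — both collapse to $\bigl(\sum_{\gamma_1\gamma_2=\gamma,\,e_1e_2=e}\alpha_1(\gamma_1)f_1(e_1)\alpha_2(\gamma_2)f_2(e_2),\,e\bigr)$, the only nontrivial step being the commutation $\alpha_2(\gamma_2)f_1(e_1)=f_1(e_1)\alpha_2(\gamma_2)$ granted by symmetry of $\pi^{-1}(\gamma_2)$ and commutativity of $R$; all sums are finite by étaleness.

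Part (b) is the main obstacle. Since $\pi$ is a local homeomorphism, $\Lambda$ is étale by \ref{prop:etalebundle}, and ampleness of $\Gamma$ gives $\Lambda$ a basis of compact-open bisections (pull such a basis of $\Gamma$ back through $\pi$), so $\Lambda\times\mathcal{E}$ is étale over the ample semigroupoid $\Gamma\times\mathcal{E}$. The first observation is that for a continuous $R$-bundle $\pi'\colon\Lambda'\to\Gamma'$ with $\Lambda'$ étale, a continuous section defined on an open set is an open embedding whose image is a bisection of $\Lambda'$; hence, applying \ref{lem:sectionalalgebraisgeneratedbybasis} with the basis of product-bisections $U\times W$ (with $U\in\mathbf{B}(\Gamma)$, $W\in\mathbf{B}(\mathcal{E})$ compact-open), it is enough to show each $\xi\in C_c(U\times W,\pi\times\mathcal{E})$ lies in $\operatorname{im}(T)$. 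Using that $\Lambda$ is étale I would cover the compact set $\supp(\xi)\cap(U\times W)$ by finitely many compact-open product-bisections $U_i'\times W_i'$ on which $\xi(\gamma,e)=(\chi_{O_i}(\gamma),e)$, where $O_i\subseteq\Lambda$ is a compact-open bisection and $\chi_{O_i}\in\mathcal{A}(\pi)$ is its characteristic section — the essential point being that the $\Lambda$-coordinate does not depend on $e$ there. Since $U\times W$ is Hausdorff and zero-dimensional, I would disjointify this cover to a clopen partition $\{Z_i\}$ of $\supp(\xi)\cap(U\times W)$ with $Z_i\subseteq U_i'\times W_i'$, so that $\xi=\sum_i 1_{Z_i}\xi$; and finally, by inclusion–exclusion each $1_{Z_i}$ is a $\mathbb{Z}$-linear combination of products $1_A1_B$ with $A\in\mathbf{B}(\Gamma)$, $B\in\mathbf{B}(\mathcal{E})$ compact-open, so that $1_{Z_i}\xi$ is a finite sum of terms $T\bigl((1_A\chi_{O_i})\otimes 1_B\bigr)$ and lies in $\operatorname{im}(T)$. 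This is where zero-dimensionality of $\Gamma$ and $\mathcal{E}$ is indispensable, and also where the bookkeeping is heaviest (finding the cover on which $\xi$ has product form, then the disjointification and inclusion–exclusion, with the usual care when the ambient is non-Hausdorff).

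For parts (c) and (d) the idea is that $R\mathcal{E}$ is "free enough." Write $\Theta=\sum_i\alpha_i\otimes g_i$ with $T(\Theta)=0$; the supports of the $g_i$ lie in a single compact-open subset $K\subseteq\mathcal{E}$, which in the Hausdorff ample setting of (c) is clopen. In (c), discreteness of $R$ forces each $g_i$ to be constant on the atoms $A_1,\dots,A_p$ of a finite clopen partition of $K$, so $\Theta=\sum_j\beta_j\otimes 1_{A_j}$ with $\beta_j\in\mathcal{A}(\pi)$ and the $A_j$ pairwise disjoint; since $K$ is clopen, $\bigoplus_j R1_{A_j}$ is a direct summand of $R\mathcal{E}$ that is free on $\{1_{A_j}\}$, whence $\Theta=0$ iff all $\beta_j=0$, and evaluating $T(\Theta)=0$ at any $(\gamma,e)$ with $e\in A_j$ yields $\beta_j(\gamma)=0$ for every $\gamma$. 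For (d), $R$ a field makes $R\mathcal{E}$ free, say with basis $\{f_\lambda\}$, and each $\pi^{-1}(\gamma)$ an $R$-vector space; writing $\Theta=\sum_\lambda\beta_\lambda\otimes f_\lambda$ and fixing $\gamma$, if some $\beta_{\lambda_0}(\gamma)\neq 0$ then, expressing the finitely many $\beta_\lambda(\gamma)$ in terms of a maximal linearly independent subfamily, the identity $\sum_\lambda\beta_\lambda(\gamma)f_\lambda(e)=0$ (for all $e$) forces a nontrivial $R$-linear dependence among the $f_\lambda$, contradicting that they form a basis; hence all $\beta_\lambda(\gamma)=0$ and $\Theta=0$. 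In summary, the construction of $T$ and parts (a), (c), (d) are routine once these decompositions are set up, whereas part (b) is the genuine obstacle, needing both the local-homeomorphism hypothesis (to make the $\Lambda$-component of a section locally independent of the $\mathcal{E}$-variable) and zero-dimensionality (to resolve arbitrary clopen cut-offs into products via inclusion–exclusion).
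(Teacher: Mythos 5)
Your proposal is correct and takes essentially the same route as the paper: the universal-property construction of $T$ with the reduction to generators $C_c(V,\pi)$, the pointwise multiplicativity check for (a), and for (b) the identical key idea that local injectivity of $\pi$ forces the $\Lambda$-component of a section to be locally independent of the $\mathcal{E}$-variable, after which ampleness finishes the decomposition (the paper disjointifies directly into boxes $V_j\times W_j$ and defines $f_j(\gamma)=F(\gamma,e)$, whereas you disjointify into clopen sets and recover boxes by inclusion--exclusion of indicators --- a cosmetic difference). Parts (c) and (d) likewise match the paper's evaluation arguments, the only variation being that in (d) the paper takes the $\mathcal{A}(\pi)$-factors linearly independent and deduces $f_i=0$, while you expand in a basis of $R\mathcal{E}$ and argue fibrewise; both are valid.
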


\begin{remark}
\begin{enumerate}[label=(\arabic*)]
    \item The isomorphism $T$ of the theorem above actually preserves several other module structures of $\mathcal{A}(\pi\times\mathcal{E})$ and $\mathcal{A}(\pi)\tensor[_R]{\otimes}{_R} R\mathcal{E}$. For example, $\mathcal{A}(\pi\times\mathcal{E})$ has a canonical left $\mathcal{A}(\pi)$-module structure as follows: Each section $\overline{F}\in\mathcal{A}(\pi\times\mathcal{E})$ is of the form $\overline{F}(\gamma,e)=(F(\gamma,e),e)$ for some unique function $F\colon\Gamma\times E\to\Lambda$. Given $\alpha\in\mathcal{A}(\pi)$, set
    \[(\alpha\ast \overline{F})(\gamma,e)=\sum_{\gamma_1\gamma_2=\gamma}(\alpha(\gamma_1)F(\gamma_2,e),e).\]
    On the other hand, $\mathcal{A}(\pi)\tensor[_R]{\otimes}{_R} R\mathcal{E}$ has a canonical left $\mathcal{A}(\pi)$-module structure as well. Similarly, both $\mathcal{A}(\pi\times\mathcal{E})$ and $\mathcal{A}(\pi)\tensor[_R]{\otimes}{_R}R\mathcal{E}$ have right $R\mathcal{E}$-module structures, and $T$ is a $(\mathcal{A}(\pi),R\mathcal{E})$-bimodule homomorphism as well.
    
    \item Suppose that $R$ is a commutative ring, $A=\oplus_{g\in G}A_g$ is a $G$-graded symmetric $R$-algebra and $B=\oplus_{h\in H}B_h$ is an $H$-graded symmetric $R$-algebra, where $G$ and $H$ are semigroupoids.
    
    Then the tensor product $A\tensor[_R]{\otimes}{_R} B$, regarded as a symmetric $R$-algebra, is $(G\times H)$-graded, with homogeneous components $(A\tensor[_R]{\otimes}{_R} B)_{(g,h)}=A_g\tensor[_R]{\otimes}{_R} B_h$.
    
    In particular, under the same conditions as in item \ref{thm:isomorphismtensorproductbundlesym}, suppose that $c\colon\Gamma\to G$ and $d\colon\mathcal{E}\to H$ are semigroupoid gradings. Then $\mathcal{A}(\pi)$ is $G$-graded and $R\mathcal{E}$ is $H$-graded, so the tensor product $\mathcal{A}(\pi)\tensor[_R]{\otimes}{_R}R\mathcal{E}$ is $(G\times H)$-graded as above. On the other hand, we have a semigroupoid grading $c\times d\colon \Gamma\times\mathcal{E}\to G\times H$, which induces a $(G\times H)$-grading on $\mathcal{A}(\pi\times\mathcal{E})$.
    
    In this case, the homomorphism $T$ of Theorem \ref{thm:isomorphismtensorproductbundle} above is a $(G\times H)$-graded homomorphism.
    
    \item If $R$ is discrete (possibly non-commutative), $\pi$ is a local homeomorphism and $\Gamma$ and $\mathcal{E}$ are ample, with $\mathcal{E}$ Hausdorff, then $T$ as above is an isomorphism, and thus induces an $R$-algebra structure on the tensor product $\mathcal{A}(\pi)\tensor[_R]{\otimes}{_R}R\mathcal{E}$. However the product is not given by ``entrywise product of pure tensors'' (i.e., in general we may have $(\alpha\otimes f)(\beta\otimes g)\neq(\alpha\ast\beta)\otimes( f\ast g)$).
    \item Similarly, an obvious variation of the theorem above yields an $R$-bimodule homomorphism $ R\mathcal{E}\tensor[_R]{\otimes}{_R}\mathcal{A}(\pi)\to \mathcal{A}(\pi\times\mathcal{E})$. If $R$ is discrete, $\pi$ is a local homeomorphism, $\Gamma$ and $\mathcal{E}$ are ample and $\mathcal{E}$ is Hausdorff, then we obtain $R$-bimodule isomorphisms $\mathcal{A}(\pi)\tensor[_R]{\otimes}{_R}R\mathcal{E}\cong\mathcal{A}(\pi\times\mathcal{E})\cong R\mathcal{E}\tensor[_R]\otimes{_R}\mathcal{A}(\pi)$. However the isomorphism between the tensor products is not simply ``inversion of pure tensors'' (i.e., it is not given by $\alpha\otimes f\mapsto f\otimes \alpha$)
    \end{enumerate}
\end{remark}

\begin{proof}[{Proof of Theorem \ref{thm:isomorphismtensorproductbundle}}]
    Although the existence and uniqueness of the map $T$ as in Equation \eqref{eq:thm:isomorphismtensorproductbundle} follows immediately from the universal property of tensor products with respect to $R$-balanced maps, it is still necessary to check that for each $(\alpha,f)\in \mathcal{A}(\pi)\times R\mathcal{E}$, the map $T(\alpha\otimes f)\colon\Gamma\times\mathcal{E}\to \Lambda\times\mathcal{E}$ is an element of $\mathcal{A}(\pi\times\mathcal{E})$.
    
    It is clear that $T(\alpha\otimes f)$ is a section of $\pi\times\mathcal{E}$, so the main issue is to prove that it is a combination of sections of $\pi\times\mathcal{E}$ which are ``continuously and compactly supported on Hausdorff subsets of $\Gamma\times\mathcal{E}$''.
    
    If $V\subseteq\Gamma$ and $W\subseteq\mathcal{E}$ are open and Hausdorff in $\Gamma$ and $\mathcal{E}$, respectively, $\alpha\in C_c(V,\pi)$ and $f\in C_c(W,R)$, then we have $T(\alpha\otimes f)\in C_c(V\times W,\pi\times\mathcal{E})$, which belongs to $\mathcal{A}(\pi\times\mathcal{E})$. As $\mathcal{A}(\pi)$ is generated by the union of all sets $C_c(V,\pi)$ for $V$ Hausdorff, and similarly for $R\mathcal{E}$, we may take arbitrary linear combinations and conclude that $T(\alpha\otimes f)\in \mathcal{A}(\pi\times\mathcal{E})$ for arbitrary $\alpha$ and $f$.
    
    \begin{enumerate}[label=(\alph*)]
        \item If we assume that $R$ is commutative and $\pi^{-1}(\gamma)$ is a symmetric $R$-bimodule for each $\gamma$, it readily follows from all relevant definitions that $T(\alpha\otimes f)T(\beta\otimes g)=T(\alpha\beta\otimes fg)$ for all $\alpha,\beta\in\mathcal{A}(\pi)$ and $f,g\in\mathcal{A}_R(\mathcal{E})$, so $T$ is an algebra homomorphism.
        \item We assume that $\Gamma$ and $\mathcal{E}$ are ample and that $\pi$ is a local homeomorphism. By Lemma \ref{lem:sectionalalgebraisgeneratedbybasis}, it is enough to prove that any $\overline{F}\in C_c(V\times W,\pi\times\mathcal{E})$ where $V$ and $W$ are open bisections of $\Gamma$ and $\mathcal{E}$, respectively, belongs to the image of $T$.
        
        Consider the restriction $\overline{F}|_{V\times W}$, which is a continuous map. As $\overline{F}$ is a section of $\pi\otimes\mathcal{E}$, then $(\pi\otimes\mathcal{E})\circ \overline{F}|_{V\times W}=\id_{V\times W}$, so by Lemma \ref{lem:compositionislocalhomeoiffrightmostislocalhomeo}, $\overline{F}|_{V\times W}$ is a local homeomorphism, and in particular it is an open map. Consider the compact $K\defeq\supp(\overline{F})\cap (V\times W)$, and let $F$ be the composition of $\overline{F}|_V$ with the projection $\Lambda\times\mathcal{E}\to\Lambda$. Then $F$ is a continuous, open map from $V\times W$ to $\Lambda$, with image the open set $F(V\times W)$.
    
        The set $F(K)$ is compact in $\Lambda$, and so it may be covered by finitely many open subsets $A_1,\ldots,A_n$ of $\Lambda$ on which $\pi$ is injective. Since $F$ is continuous, then the sets $F^{-1}(A_i)$ form a cover of $K$. We then consider a finer finite cover by ``boxes'' of the form $V_j\times W_j$, $j=1,\ldots,m$, where $V_j\subseteq V$ and $W_j\subseteq W$ are compact-open (here is where we use that $\Gamma$ and $\mathcal{E}$ are ample). In fact, as $V$ and $W$ are Hausdorff then these sets $V_j$ and $W_j$ sets are clopen in $V$ and $W$, respectively. Taking appropriate intersections and differences (which preserve clopen sets) of the boxes $V_j\times W_j$, and rewriting them as disjoint unions of smaller boxes, we may moreover assume that these boxes $V_j\times W_j$ are pairwise disjoint (this is the same procedure as when one proves that boxes form a semiring of subsets of $V\times W$, so we ommit the details). Of course, we may also assume that $W_j\neq\varnothing$ for each $j$.
    
        We now prove that for each $j$, the value of $F$ inside $V_j\times W_j$ depends only on the first entry, i.e., that if $(\gamma,e_0),(\gamma,e_1)\in V_j\times W_j$, then $F(\gamma,e_0)=F(\gamma,e_1)$. Indeed, first choose $i$ such that $V_j\times W_j\subseteq \pi^{-1}(A_i)$. Then $\pi(F(\gamma,e_0))=\gamma=\pi(F(\gamma,e_1))$, because $\overline{F}$ is a section of $\pi\times\mathcal{E}$. Since $\pi$ is injective on $A_i$, which contains $F(V_j\times W_j)$, then $F(\gamma,e_0)=F(\gamma,e_1)$.
    
        Thus we define, for each $j$, the maps $f_j\colon V_j\to\Lambda$ as $f_j(\gamma)=F(\gamma,e)$, where $e$ is an arbitrary element of $W_j$. Then $f_j$ is continuous on the compact-open set $V_j$. We extend $f_j$ as zero on $\Gamma\setminus V_j$, and so $f_j\in C_c(V_j,\pi)\subseteq \mathcal{A}(\pi)$. Similarly, as $W_j$ is also compact-open, then the characteristic function $1_{W_j}$ of $W_j$, from $\mathcal{E}$ to $R$, belongs to $R\mathcal{E}$.
    
        Now let us prove that $\overline{F}=\sum_j T(f_j\otimes 1_{W_j})$, or equivalently that
        \[F(\gamma,e)=\sum_jf_j(\gamma)1_{W_j}(e)\qquad\text{for all }(\gamma,e)\in\Gamma\times\mathcal{E}.\ntag\label{eq:proofofthm:isomorphismtensorproductbundle}\]
        There are two cases to consider:
        \begin{itemize}
            \item If $(\gamma,e)$ does not belong to any of the sets $V_j\times W_j$, then in particular it does not belong to $K$, so both sides of Equation \eqref{eq:proofofthm:isomorphismtensorproductbundle} are zero.
        \item If $(\gamma,e)$ belongs to $V_j\times W_j$ for some $j$, then in fact such $j$ is unique since the sets $V_j\times W_j$ are pairwise disjoint. In this case the equality of Equation \eqref{eq:proofofthm:isomorphismtensorproductbundle} follows by definition of $f_j$.
        \end{itemize}
        Therefore, $T$ is surjective.
    \item We now assume that $R$ is discrete and $\mathcal{E}$ is ample and Hausdorff, and prove that $T$ is injective.
    
        As $R$ is discrete and $\mathcal{E}$ is ample and Hausdorff, then $R\mathcal{E}$ is generated as a left $R$-module by functions $1_W$, where $W\subseteq\mathcal{E}$ is a compact-open subset of $\mathcal{E}$. Thus every element of $\mathcal{A}(\pi)\otimes\mathcal{A}_R(\mathcal{E})$ is a sum of the form $\sum_{i=1}^n \alpha_i\otimes 1_{W_i}$, where $\alpha_i\in\mathcal{A}_R(\pi)$ and the $W_i$ are compact-open subsets of $\mathcal{E}$. In fact, we may also assume that such sets $W_i$ are pairwise disjoint, by taking appropriate intersections and set differences among them, similarly to how we did in the previous item.
        
        More formally, as the sets $W_i$ are clopen, then there exists a finite refinement $\mathscr{B}=\left\{B_j:j\right\}$ of $\left\{W_1,\ldots,W_n\right\}$ by nonempty, pairwise disjoint clopen subsets. This refinement $\mathscr{B}$ will have the following properties:
        \begin{itemize}
            \item For any $i$ and $j$, $B_j\cap W_i\neq\varnothing$ if and only if $B_j\subseteq W_i$;
            \item More generally, for any $i$ and $j$ and any $e\in B_j$, we have $e\in W_i$ if and only if $B_j\subseteq W_i$;
            \item For any $i$, $W_i$ is the disjoint union of all $B_j$ contained in $W_i$. In symbols, $W_i=\sqcup_{j:B_j\subseteq W_i}B_j$.
        \end{itemize}
        For example, given a subset $P$ of $\left\{1,\ldots,n\right\}$, let $B_P=\bigcap_{i\in P}W_i\setminus\bigcup_{i\not\in P}W_i$. The family $\mathscr{B}=\left\{B_P:P\in 2^{\left\{1,\ldots,n\right\}}\right\}\setminus\left\{\varnothing\right\}$ has at most $2^n$ elements and the desired properties.
        
        In any case, we can rewrite $1_{W_i}=\sum_{B\in\mathscr{B}:B\subseteq W_i}1_B$, and so
        \[\sum_{i=1}^n\alpha_i\otimes 1_{W_i}=\sum_{B\in\mathscr{B}}\left(\sum_{i:B\subseteq W_i}\alpha_i\right)\otimes 1_B.\]

        We may now prove that $T$ is injective. Suppose that $x\in\ker T$. The argument above shows that we can write $x$ as a sum $x=\sum_{i=1}^n\alpha_i\otimes1_{W_i}$, where the sets $W_1,\ldots,W_n$ are nonempty and pairwise disjoint. Let $j$ be fixed and choose an arbitrary $e\in W_j$. Since the sets $W_1,\ldots,W_n$ are pairwise disjoint, then for all $\gamma\in\Gamma$,
        \[(0,e)=T(x)(\gamma,e)=(\sum_{i=1}^n\alpha_i(\gamma)1_{W_i}(e),e)=(\alpha_j(\gamma),e),\]
        hence $\alpha_j=0$ for each $j$, and thus $x=\sum_{i=1}^n 0\otimes 1_{W_i}=0$. Therefore $T$ is injective.
    \item We now assume that $R$ is a field, and prove that $T$ is injective in this case.
    
    Let $x\in\ker T$. Then $x$ may be written as $x=\sum_{i=1}^n\alpha_i\otimes f_i$, where the elements $\alpha_i$ are linearly independent with respect to the right $R$-vector space structure of $\mathcal{A}(\pi)$ (e.g. elements of a prescribed basis). Then for all $e\in\mathcal{E}$ and all $\gamma\in\Gamma$ we have
    \[(0,e)=T(x)(\gamma,e)=(\sum_{i=1}^n\alpha_i(\gamma)f_i(e),e)\]
    so $0=\sum_{i=1}^n\alpha_i f_i(e)$ in $\mathcal{A}(\pi)$, for each $e\in\mathcal{E}$. As the $\alpha_i$ are linearly independent then $f_1(e)=\cdots=f_n(e)=0$, for each $e\in\mathcal{E}$. Thus $f_1=\cdots=f_n=0$, so $x=0$. Therefore $T$ is injective.\qedhere
    \end{enumerate}
\end{proof}

The theorem above may be seen as a simultaneous generalization to both Proposition 4.1 and Theorem 4.3 of \cite{arxiv1811.10897}.

\begin{corollary}
Let $R$ and be a discrete commutative ring and $A$ an $R$-algebra. Then given any ample Hausdorff semigroupoid $\Gamma$, the semigroupoid algebra $A\Gamma$ is isomorphic as an $R$-bimodule to $A\tensor[_R]{\otimes}{_R}(R\Gamma)$. If $R$ is commutative and $A$ is a symmetric $R$-algebra then this isomorphism is both an $R$-algebra and an $A$-algebra isomorphism.
\end{corollary}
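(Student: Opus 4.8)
The plan is to deduce this from Theorem~\ref{thm:isomorphismtensorproductbundle}, taking for the ``base'' bundle of that theorem the trivial bundle over a one-point semigroupoid and taking $\mathcal{E}=\Gamma$. Concretely, let $\mathbf{1}$ be the one-object, one-arrow semigroupoid (a single point $\ast$ with $\so(\ast)=\ra(\ast)=\ast$ and $\ast\ast=\ast$), which is an étale, globally Hausdorff, zero-dimensional -- hence locally $R$-normal -- semigroupoid, and regard $A$ with the discrete topology (the natural choice since $R$ is discrete, and what is needed below). Let $\pi_0\colon A\to\mathbf{1}$ be the $R$-bundle whose only fibre $\pi_0^{-1}(\ast)=A$ carries the given $R$-bimodule structure and whose bundle multiplication is that of $A$; since $A$ is discrete this meets all the requirements of Definition~\ref{def:R.bundle} and, in addition, $\pi_0$ is a local homeomorphism. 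One checks directly that $\mathcal{A}(\pi_0)\cong A$ as $R$-algebras: a section of $\pi_0$ is just an element of $A$, $C_c(\mathbf{1},\pi_0)=A$ as $\mathbf{1}$ is compact Hausdorff, and the convolution $(\alpha\ast\beta)(\ast)=\sum_{ab=\ast}\alpha(a)\beta(b)$ reduces to $\alpha(\ast)\beta(\ast)$. Moreover $\pi_0\times\Gamma\colon A\times\Gamma\to\mathbf{1}\times\Gamma$, under the identification $\mathbf{1}\times\Gamma\cong\Gamma$, is exactly the coordinate-projection bundle $A\times\Gamma\to\Gamma$ whose sectional algebra is, by definition (Subsection~\ref{subsectiion:semigroupoidalgebra}), $A\Gamma$. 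Applying Theorem~\ref{thm:isomorphismtensorproductbundle} to $\pi_0$ and $\mathcal{E}=\Gamma$ then produces an $R$-bimodule homomorphism
\[
T\colon A\tensor[_R]{\otimes}{_R}R\Gamma\longrightarrow A\Gamma,\qquad T(a\otimes f)(\gamma)=af(\gamma);
\]
it is surjective by part~(b) (the base $\mathbf{1}$ is trivially ample, $\Gamma$ is ample by hypothesis, and $\pi_0$ is a local homeomorphism) and injective by part~(c) ($R$ is discrete and $\Gamma$ is ample and Hausdorff), hence an isomorphism of $R$-bimodules.

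Suppose now in addition that $A$ is a symmetric $R$-algebra. Then the unique fibre $\pi_0^{-1}(\ast)=A$ is a symmetric $R$-bimodule, so part~(a) of Theorem~\ref{thm:isomorphismtensorproductbundle} shows that $T$ is an $R$-algebra homomorphism, hence an $R$-algebra isomorphism. To promote this to an $A$-algebra isomorphism I would equip $A\Gamma$ with the $A$-bimodule structure given by pointwise multiplication, $(a\cdot g)(\gamma)=ag(\gamma)$ and $(g\cdot a)(\gamma)=g(\gamma)a$, which makes it an $A$-algebra, and $A\tensor[_R]{\otimes}{_R}R\Gamma$ with the obvious one, $a\cdot(b\otimes f)=ab\otimes f$ and $(b\otimes f)\cdot a=ba\otimes f$; from $T(a\otimes f)(\gamma)=af(\gamma)$ it is immediate that $T$ intertwines both, so $T$ is an isomorphism of $A$-algebras.

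The only steps requiring genuine care -- and hence the ``main obstacle'', such as it is -- are the two bookkeeping identifications $\mathcal{A}(\pi_0)\cong A$ and $\mathcal{A}(\pi_0\times\Gamma)\cong A\Gamma$ as $R$-algebras (so that convolution products match up), and the verification that $A$ being discrete forces $\pi_0$ to be a local homeomorphism, which is precisely what licenses the appeal to part~(b) of the theorem. Everything else is a direct application of Theorem~\ref{thm:isomorphismtensorproductbundle}.
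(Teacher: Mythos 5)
Your proof is correct and takes essentially the same route as the paper: the paper likewise realizes $A$ as the sectional algebra of the trivial bundle $\pi^A\colon A\to\{1\}$ over the one-element group, identifies the coordinate-projection bundle $A\times\Gamma\to\Gamma$ with $\pi^A\times\Gamma$, and applies Theorem \ref{thm:isomorphismtensorproductbundle} to get the chain of isomorphisms $A\Gamma\cong\mathcal{A}(\pi^A\times\Gamma)\cong\mathcal{A}(\pi^A)\tensor[_R]{\otimes}{_R}(R\Gamma)\cong A\tensor[_R]{\otimes}{_R}(R\Gamma)$. Your version simply makes explicit the bookkeeping the paper leaves implicit (that $A$ is taken discrete so $\pi^A$ is a local homeomorphism, the verification $\mathcal{A}(\pi^A)\cong A$, and the $A$-bimodule compatibility), which is fine.
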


\begin{proof}
By definition, $A\Gamma$ is the sectional algebra of the bundle $A\times\Gamma\to\Gamma$ given by the second coordinate projection. In fact, we may consider the ``trivial bundle'' $\pi^A\colon A\to\left\{1\right\}$ to the one-element group, which satisfies $\mathcal{A}(\pi^A)\cong A$. The coordinate projection bundle $A\times\Gamma\to\Gamma\cong\left\{1\right\}\times\Gamma$ may be obviously identified with $\pi^A\times\Gamma$. Theorem \ref{thm:isomorphismtensorproductbundle} yields an explicit isomorphism of $R$-bimodules/$R$-algebra/$A$-algebra in each relevant case
\[A\Gamma\cong\mathcal{A}(\pi^A\times\Gamma)\cong\mathcal{A}(\pi^A)\tensor[_R]{\otimes}{_R}(R\Gamma)\cong A\tensor[_R]{\otimes}{_R}(R\Gamma).\qedhere\]
\end{proof}

\begin{corollary}\label{cor:isomorphismtensorproduct}
    Let $\Gamma_1$ and $\Gamma_2$ be ample semigroupoids, $R$ be a discrete unital ring, and $A$ be a discrete $R$-algebra. Suppose that at least one of the following conditions holds:
    \begin{enumerate}[label=(\Roman*)]
        \item\label{cor:isomorphismtensorproductHaus} $\Gamma_2$ is Hausdorff.
        \item\label{cor:isomorphismtensorproductfield} $R$ is a field.
    \end{enumerate}
    Then $A(\Gamma_1\times\Gamma_2)$ is isomorphic as an $R$-bimodule to $(A\Gamma_1)\tensor[_R]{\otimes}{_R}(R\Gamma_2)$. If $R$ is commutative, these symmetric algebras are isomorphic.
\end{corollary}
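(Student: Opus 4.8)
The plan is to realize both sides as sectional algebras of bundles covered by Theorem~\ref{thm:isomorphismtensorproductbundle} and then quote that theorem. First I would let $\pi\colon A\times\Gamma_1\to\Gamma_1$ be the coordinate projection bundle, with each fiber $A\times\{\gamma\}$ carrying the $R$-bimodule structure of $A$, so that $\mathcal{A}(\pi)\cong A\Gamma_1$ by the discussion in Subsection~\ref{subsectiion:semigroupoidalgebra}. Since $A$ is discrete, $\pi$ is a local homeomorphism, hence $A\times\Gamma_1$ is étale in the sense of Proposition~\ref{prop:etalebundle}; and since $\Gamma_1$ and $\Gamma_2$ are ample, they are in particular locally $R$-normal étale semigroupoids. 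Thus the hypotheses of Theorem~\ref{thm:isomorphismtensorproductbundle} hold with $\Gamma=\Gamma_1$ and $\mathcal{E}=\Gamma_2$.

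Next I would observe that the bundle $\pi\times\Gamma_2\colon(A\times\Gamma_1)\times\Gamma_2\to\Gamma_1\times\Gamma_2$ is, after the evident permutation of coordinates, the coordinate projection bundle $A\times(\Gamma_1\times\Gamma_2)\to\Gamma_1\times\Gamma_2$; indeed the fibers match, since $(\pi\times\Gamma_2)^{-1}(\gamma_1,\gamma_2)=(A\times\{\gamma_1\})\times\{\gamma_2\}\cong A\times\{(\gamma_1,\gamma_2)\}$ as $R$-bimodules, and the topologies agree. Hence $\mathcal{A}(\pi\times\Gamma_2)\cong A(\Gamma_1\times\Gamma_2)$, and Theorem~\ref{thm:isomorphismtensorproductbundle} provides an $R$-bimodule homomorphism $T\colon(A\Gamma_1)\tensor[_R]{\otimes}{_R}(R\Gamma_2)\to A(\Gamma_1\times\Gamma_2)$.

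It then remains to read off bijectivity of $T$ from that theorem. Surjectivity is part~(b), which applies because $\Gamma_1$ and $\Gamma_2$ are ample and $\pi$ is a local homeomorphism. For injectivity: under hypothesis~\ref{cor:isomorphismtensorproductHaus} we use part~(c), since $R$ is discrete and $\Gamma_2$ is ample and Hausdorff; under hypothesis~\ref{cor:isomorphismtensorproductfield} we use part~(d), since $R$ is a field. In either case $T$ is an $R$-bimodule isomorphism. Finally, when $R$ is commutative (so that, with $A$ taken symmetric, $A\Gamma_1$, $R\Gamma_2$ and $A(\Gamma_1\times\Gamma_2)$ are all symmetric $R$-algebras), the fibers $\pi^{-1}(\gamma)\cong A$ are symmetric $R$-bimodules, so part~(a) upgrades $T$ to an isomorphism of $R$-algebras. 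I expect no real obstacle here: all the substance is in Theorem~\ref{thm:isomorphismtensorproductbundle}, and the only care needed is in matching the coordinate reshuffling with the bundle and bimodule structures and in checking that the two hypotheses of the corollary correspond to parts~(b)--(d) of the theorem.
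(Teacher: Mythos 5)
Your proposal is correct and follows essentially the same route as the paper: identify $A\Gamma_1$ with $\mathcal{A}(\pi_1)$ for the coordinate projection bundle $\pi_1\colon A\times\Gamma_1\to\Gamma_1$, identify $A(\Gamma_1\times\Gamma_2)$ with $\mathcal{A}(\pi_1\times\Gamma_2)$, and invoke Theorem~\ref{thm:isomorphismtensorproductbundle} under condition \ref{cor:isomorphismtensorproductHaus} or \ref{cor:isomorphismtensorproductfield}. You merely spell out the hypothesis checks (discreteness of $A$ giving a local homeomorphism, ampleness giving local $R$-normality, and symmetry of $A$ for the algebra statement) that the paper leaves implicit.
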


Even in the case of groupoids and commutative rings, this already yields a generalization of \cite[Theorem 4.3]{arxiv1811.10897}, since only $\Gamma_2$ is required to be Hausdorff.

\begin{proof}
    By definition, $A\Gamma_1$ is the sectional algebra of the second-coordinate projection bundle $\pi_1\colon A\times\Gamma_1\to\Gamma_1$, and similarly $A(\Gamma_1\times\Gamma_2)$ is the sectional algebra of the second and third coordinate projection bundle $A\times \Gamma_1\times\Gamma_2\to\Gamma_1\times\Gamma_2$, which actually is the same as $\pi_1\times\Gamma_2$. Under either of the Conditions \ref{cor:isomorphismtensorproductHaus} or \ref{cor:isomorphismtensorproductfield}, Theorem \ref{thm:isomorphismtensorproductbundle} yields explicit isomorphisms
    \[A(\Gamma_1\times\Gamma_2)=\mathcal{A}(\pi_1\times\Gamma_2)\cong\mathcal{A}(\pi_1)\tensor[_R]{\otimes}{_R} (R\Gamma_2)=(A\Gamma_1)\tensor[_R]{\otimes}{_R}(R\Gamma_2).\qedhere\]
\end{proof}

\subsection{Sectional algebras of semidirect product bundles as naïve crossed products}

\begin{definition}\label{def:landpreactiononRbundle}
    A $\land$-preaction $\theta$ of an étale inverse semigroupoid $\mathcal{S}$ on a continuous $R$-bundle $\pi\colon\Lambda\to\Gamma$ consists of two $\land$-preactions $\theta^\Gamma$ and $\theta^\Lambda$ of $\mathcal{S}$ on $\Gamma$ and $\Lambda$, respectively, satisfying:
    \begin{enumerate}[label=(\roman*)]
    \item\label{def:landpreactiononRbundle.1} for all $s\in\mathcal{S}$, $\pi\circ\theta_s^\Lambda=\theta_s^\Gamma\circ\pi$;
    \item\label{def:landpreactiononRbundle.2} $\theta^\Lambda$ preserves all of the relevant $R$-bimodule structure, in the sense that if $s\in\mathcal{S}$ and $a\in\dom(\theta^\Gamma_s)$, then $\theta^\Lambda_s$ restricts to an $R$-bimodule isomorphism from $\pi^{-1}(a)$ onto $\pi^{-1}(\theta^\Gamma_s(a))$.
    \end{enumerate}
    This in particular means that:
    \begin{enumerate}[label=(\roman*)]\setcounter{enumi}{2}
        \item $\dom(\theta^\Lambda_s)=\pi^{-1}(\dom(\theta^\Gamma_s))$;
        \item $\pi(\theta^\Lambda_s(a))=\theta^\Gamma_s(\pi(a))$, in the sense that either side is defined if and only if the other one is defined, in which case they coincide;
        \item $\dom(\theta^\Lambda_s)+\dom(\theta^\Lambda_s)\subseteq\dom(\theta^\Lambda_s)$.
    \end{enumerate}
    We say that $\theta$ is continuous if both $\theta^\Gamma$ and $\theta^\Lambda$ are continuous, and similarly for open/associative.
\end{definition}

\begin{remark}
    The statement in \ref{def:landpreactiononRbundle.2} is sensible because of \ref{def:landpreactiononRbundle.1}. More precisely, \ref{def:landpreactiononRbundle.1} alone already implies that for all $s\in\mathcal{S}$ and $a\in\dom(\theta^\Gamma_s)$ we have $\theta^\Lambda_s(\pi^{-1}(a))=\pi^{-1}(\theta^\Gamma_s(a))$, so that $\theta^\Lambda_s$ already restricts to a bijection from $\pi^{-1}(a)$ onto $\pi^{-1}(\theta^\Gamma_s(a))$, which are $R$-bimodules. Thus it makes sense to require this bijection to be an $R$-bimodule homomorphism.
\end{remark}

We may omit superscripts and write simply $\theta$ for either $\theta^\Lambda$ or $\theta^\Gamma$, whenever there is no risk of confusion.

\begin{itemize}
    \item \uline{\textbf{Convention}}: From now on and until the end of this subsection, we fix a continuous, open and associative $\land$-preaction $\theta$ of an étale, locally $R$-normal inverse semigroupoid $\mathcal{S}$ on a continuous $R$-bundle $\pi\colon\Lambda\to\Gamma$, where $\Gamma$ is étale and locally $R$-normal, and moreover \uline{we assume that $\mathcal{S}\ltimes\Gamma$ is open as a subset of $\mathcal{S}\times\Gamma$}.
\end{itemize}

With this, we may perform two procedures:
\begin{itemize}
    \item First construct the semidirect products $\mathcal{S}\ltimes\Lambda$ and $\mathcal{S}\ltimes\Gamma$, induce a new $R$-bundle $\mathcal{S}\ltimes\pi\colon\mathcal{S}\ltimes\Lambda\to\mathcal{S}\ltimes\Gamma$ and take its sectional algebra $\mathcal{A}(\mathcal{S}\ltimes\pi)$;
    \item First construct the sectional algebra $\mathcal{A}(\pi)$, induce a $\land$-preaction of $\mathcal{S}$ on $\mathcal{A}(\pi)$ and then consider the naïve crossed product $\mathcal{A}(\pi)\rtimes\mathcal{S}$.
\end{itemize}
The current goal is to prove that, under appropriate technical conditions, these two procedures yield isomorphic algebras. In other words, ``sectional algebras'' intertwine ``semidirect products'' and ``naïve crossed products''.

\begin{denv*}{Semidirect product bundles}
Consider the semidirect products $\mathcal{S}\ltimes\Lambda$ and $\mathcal{S}\ltimes\Gamma$. Since $\mathcal{S}\ltimes\Gamma$ is an open subset of $\mathcal{S}\times\Gamma$ then $\theta^\Gamma$ is an open $\land$-preaction and $\mathcal{S}\ltimes\Gamma$ is an étale semigroupoid. (See the discussion succeeding \cite[Proposition 3.10]{arxiv1902.09375}.)

We thus define the new $R$-bundle
\[\mathcal{S}\ltimes\pi\colon\mathcal{S}\ltimes\Lambda\to\mathcal{S}\ltimes\Gamma,\qquad (\mathcal{S}\ltimes\pi)(s,x)=(s,\pi(x)).\] 
where, for all $(s,\gamma)\in\mathcal{S}\ltimes\Gamma$, the preimage
\[(\mathcal{S}\ltimes\pi)^{-1}(s,\gamma)=\left\{s\right\}\times\pi^{-1}(\gamma)\]
carries the $R$-bimodule structure induced by that of $\pi^{-1}(\gamma)$.
\end{denv*}

\begin{denv*}{The induced $\land$-preaction on $\mathcal{A}(\pi)$}

As we are assuming that $\mathcal{S}\ltimes\Gamma$ is open in $\mathcal{S}\times X$, then for every $s\in\mathcal{S}$ the set
\[\dom(\theta_s)=\left\{a\in\Gamma:(s,a)\in\mathcal{S}\ltimes\Gamma\right\},\]
is open in $\Gamma$.

Consider the sectional algebra $\mathcal{A}(\pi)$. We define a $\land$-preaction $\Theta$ of $\mathcal{S}$ on $\mathcal{A}(\pi)$ by setting, for all $s\in\mathcal{S}$,
\begin{enumerate}[label=(\roman*)]
    \item $\dom(\Theta_s)=\bigcup\left\{C_c(V,\pi):V\text{ Hausforff, }V\subseteq\dom(\theta^{\Gamma}_s)\right\}$;
    \item \[\Theta_s(f)(\gamma)=\begin{cases}
    \theta^{\Lambda}_s(f(\theta^\Gamma_{s^*}(\gamma))),
        &\text{if }\gamma\in\dom(\theta^\Gamma_{s^*}),\\
    0_\gamma,
        &\text{otherwise,}
    \end{cases}\]
    whenever $f\in\dom(\Theta_s)$.
\end{enumerate}

\begin{lemma}
    If $\theta$ is associative then $\Theta$ is associative.
\end{lemma}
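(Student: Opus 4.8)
The plan is to verify the associativity identity for $\Theta$ by a direct pointwise computation, after reducing to generators supported on open bisections. Recall that $\Theta$ is associative precisely when
\[\Theta_{t^*}\bigl(\alpha\ast\Theta_t(\beta)\bigr)\ast\gamma=\Theta_{t^*}\bigl(\alpha\ast\Theta_t(\beta\ast\gamma)\bigr)\]
for all $s,t,u\in\mathcal{S}$ with $stu$ defined and all $\alpha\in\dom(\Theta_s)$, $\beta\in\dom(\Theta_t)$, $\gamma\in\ran(\Theta_u)$. Both sides are additive in each of $\alpha,\beta,\gamma$ separately: convolution is biadditive, each $\Theta_s$ is additive on its domain, and the domains and ranges of the $\Theta_s$ are sub-bimodules (hence additively closed) of $\mathcal{A}(\pi)$. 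Moreover every $\alpha\in\dom(\Theta_s)$ is, by the argument of Lemma~\ref{lem:sectionalalgebraisgeneratedbybasis} applied inside the open set $\dom(\theta^\Gamma_s)$, a finite sum of sections in $C_c(U,\pi)$ for open bisections $U\subseteq\dom(\theta^\Gamma_s)$, and similarly for $\beta$ and $\gamma$. So it suffices to prove the identity when $\alpha\in C_c(U_s,\pi)$, $\beta\in C_c(U_t,\pi)$, $\gamma\in C_c(U_u,\pi)$ for open bisections $U_s\subseteq\dom(\theta^\Gamma_s)$, $U_t\subseteq\dom(\theta^\Gamma_t)$, $U_u\subseteq\ran(\theta^\Gamma_u)$.

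For such generators I would use that products of open bisections of $\Gamma$ are open bisections and that each $\theta^\Gamma_s$ carries open bisections into open bisections (being a homeomorphism onto an open set and a semigroupoid isomorphism onto $\ran(\theta^\Gamma_s)$). It follows that $\Theta_t(\beta)$ is supported on the open bisection $\theta^\Gamma_t(U_t)$, so every convolution appearing in the identity collapses to a single product in $\Lambda$, and every application of a map $\Theta_s$ collapses to a single application of the fibrewise isomorphism $\theta^\Lambda_s$. Evaluating at $x\in\Gamma$, the left-hand side equals $\theta^\Lambda_{t^*}\bigl(\alpha(r)\,\theta^\Lambda_t(\beta(b))\bigr)\,\gamma(c)$, where $r\in U_s$, $b\in U_t$, $c\in U_u$ are the unique elements (if any exist) with $x=\theta^\Gamma_{t^*}\bigl(r\,\theta^\Gamma_t(b)\bigr)\,c$, and is $0_x$ otherwise; the right-hand side equals $\theta^\Lambda_{t^*}\bigl(\alpha(r)\,\theta^\Lambda_t(\beta(b)\,\gamma(c))\bigr)$, where now $x=\theta^\Gamma_{t^*}\bigl(r\,\theta^\Gamma_t(bc)\bigr)$, and is $0_x$ otherwise. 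In carrying this out one uses repeatedly that $\pi$ is rigid and that $\theta^\Lambda$ lies over $\theta^\Gamma$ (Definition~\ref{def:landpreactiononRbundle}) — so that a product, or an application of $\theta^\Lambda_s$, in $\Lambda$ is defined exactly when the corresponding datum in $\Gamma$ is — together with the fact that $\dom(\theta^\Gamma_t)$ and $\ran(\theta^\Gamma_t)$ are ideals of $\Gamma$, which makes all the composites written above legitimate.

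It then remains to compare the two expressions, and here the hypothesis enters. Associativity of $\theta^\Gamma$ gives $\theta^\Gamma_{t^*}\bigl(r\,\theta^\Gamma_t(b)\bigr)\,c=\theta^\Gamma_{t^*}\bigl(r\,\theta^\Gamma_t(bc)\bigr)$ whenever both sides are meaningful, so the two domains of definition coincide and, for each $x$ in that common domain, the same triple $(r,b,c)$ is selected; associativity of $\theta^\Lambda$, applied to $(\alpha(r),\beta(b),\gamma(c))\in\dom(\theta^\Lambda_s)\times\dom(\theta^\Lambda_t)\times\ran(\theta^\Lambda_u)$ (note $\alpha(r)\in\pi^{-1}(r)\subseteq\pi^{-1}(\dom(\theta^\Gamma_s))=\dom(\theta^\Lambda_s)$, and likewise for $\beta(b)$ and $\gamma(c)$), gives $\theta^\Lambda_{t^*}\bigl(\alpha(r)\,\theta^\Lambda_t(\beta(b))\bigr)\,\gamma(c)=\theta^\Lambda_{t^*}\bigl(\alpha(r)\,\theta^\Lambda_t(\beta(b)\,\gamma(c))\bigr)$. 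Hence the two sides of the identity agree at every $x\in\Gamma$, and the lemma follows. The main obstacle is the bookkeeping of the second paragraph: one must check carefully that, on generators supported on bisections, each nested convolution and each application of $\Theta_s$ genuinely reduces to the single displayed term, and — crucially — that the supports of the two sides of the identity are governed entirely by the $\Gamma$-level expressions, so that the $\Lambda$-level comparison is only ever invoked in a range where $\theta^\Lambda$-associativity directly applies.
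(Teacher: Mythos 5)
Your underlying mechanism is the same as the paper's: evaluate both sides of $\Theta_{t^*}(\alpha\ast\Theta_t(\beta))\ast\gamma=\Theta_{t^*}(\alpha\ast\Theta_t(\beta\ast\gamma))$ pointwise, use associativity of $\theta^\Gamma$ to identify the factorization data indexing the convolutions, and associativity of $\theta^\Lambda$ to identify the corresponding values. The difference is your preliminary reduction to generators supported on open bisections so that every sum collapses to a single term, and that is where there is a genuine gap. You justify the collapse by asserting that each $\theta^\Gamma_s$ ``carries open bisections into open bisections, being a homeomorphism onto an open set and a semigroupoid isomorphism''. A semigroupoid isomorphism in the sense of this paper only preserves composability and products; it need not intertwine the ambient source and range maps, so it need not preserve bisections. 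For instance, let $\Gamma$ be discrete with three arrows $a\colon x\to y$, $b\colon z\to w$, $c\colon z\to v$ over five distinct vertices (so $\Gamma^{(2)}=\varnothing$), and let $\mathbb{Z}_2$ act globally by $\theta_g(a)=b$, $\theta_g(b)=a$, $\theta_g(c)=c$: all axioms of Definition~\ref{def:action} and the standing conventions hold, yet $\theta_g$ maps the bisection $\{a,c\}$ onto $\{b,c\}$, which is not a bisection since $\so(b)=\so(c)$. Consequently $\Theta_t(\beta)$ need not be supported on a bisection, the uniqueness of the pair or triple $(r,b,c)$ at a given point fails, your displayed single-term formulas are not correct as written, and it is no longer clear that $\alpha\ast\Theta_t(\beta)$ lies in a single $C_c(W,\pi)$ with $W$ Hausdorff, which you implicitly use when applying $\Theta_{t^*}$ to it.

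The gap can be closed, but it needs an ingredient you did not supply: since $\Gamma$ is étale and $\theta^\Gamma_t$ is a homeomorphism between the open sets $\dom(\theta^\Gamma_t)$ and $\ran(\theta^\Gamma_t)$, one may refine the decomposition of $\beta$ (and likewise at the later stages, for $U_s\theta^\Gamma_t(U_t)$ under $\theta^\Gamma_{t^*}$, and on the right-hand side) by covering supports with sets of the form $U\cap\theta^\Gamma_{t^*}(W)$, $W$ a bisection around the image point, so that both a support and its image sit inside bisections; only then does the bookkeeping of your second paragraph go through. The paper avoids all of this: it makes no reduction to bisections, writes both sides at a point as convolution sums indexed by factorization conditions, and observes that $\theta^\Gamma$-associativity makes the two index sets equal while $\theta^\Lambda$-associativity makes the matching summands equal. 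That route needs nothing beyond the ideal properties of the domains, and is the cleaner way to handle exactly the step your argument leaves unsupported.
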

\begin{proof}
    To determine the associativity of $\Theta$ we need to verify that \[\left[\Theta_t^*(f\Theta_t(g))h\right](a)=\left[\Theta_t^*(f\Theta_t(gh))\right](a)\ntag\label{eq:conditionforassociativityofbigtheta}\]
    for all $a\in\Gamma$, whenever $\supp(f)\subseteq\dom(\theta_s)$, $\supp(g)\subseteq\dom(\theta_t)$ and $\supp(h)\subseteq\ran(\theta_u)$, where $stu$ is defined in $\mathcal{S}$.
    
    If $a\not\in\dom(\theta_t)$ then both sides of Equation \eqref{eq:conditionforassociativityofbigtheta} are equal to $0_t$, so we assume $a\in\dom(\theta_t)$. On one hand, a straightforward usage of the definition of the product of $\mathcal{A}(\pi)$ yields
    \begin{align*}
        \left[\Theta_t^*(f\Theta_t(g))h\right](a)\hspace{-20pt}&\\
        &=\sum\left\{\theta_t^*(f(d)\theta_t(g(\theta_t^*(e)))h(c):bc=a,b\in\dom(\theta_t),de=\theta_t(b),d\in\dom(\theta_s),e\in\ran(\theta_t)\right\}\\
        &=\sum\left\{\theta_t^*(f(d)\theta_t(g(\theta_t^*(e)))h(c):\theta_{t^*}(de)c=a,d\in\dom(\theta_s),e\in\ran(\theta_t)\right\},\ntag\label{eq:conditionforassociativityofbigthetaleft},
    \end{align*}
    where the last equality follows from $de=\theta_t(b)$.
    On the other hand, a similar computation gives us
    \begin{align*}
        \Theta_t^*(f\Theta_t(gh))(a)
        &=\sum\left\{\theta_t^*(f(x)\theta_t(g(z)h(w))):xy=\theta_t(a),x\in\dom(\theta_s),zw=\theta_{t^*}(y),z\in\dom(\theta_t)\right\}\\
        &=\sum\left\{\theta_t^*(f(d)\theta_t(g(\theta_{t^*}(e))h(c))):d\theta_t(\theta_{t^*}(e)c)=\theta_t(a),d\in\dom(\theta_s),e\in\ran(\theta_t)\right\}\ntag\label{eq:conditionforassociativityofbigthetaright},
    \end{align*}
    where the last equality follows from the substitutions $d=x$, $w=c$, $y=\theta_t(dw)$ and $z=\theta_{t^*}(e)$. As $\theta^\Lambda$ is associative, then the respective terms of each of the sums in \eqref{eq:conditionforassociativityofbigthetaleft} and \eqref{eq:conditionforassociativityofbigthetaright} are equal. As $\theta^\Gamma$ is associative, the conditions ``$d\theta_t(\theta_{t^*}(e)c)=\theta_t(a)$'' and ``$\theta_{t^*}(de)c=a$'' are equivalent. Therefore the elements in \eqref{eq:conditionforassociativityofbigthetaleft} and \eqref{eq:conditionforassociativityofbigthetaright} are equal, so $\Theta$ is associative.\qedhere
\end{proof}

\end{denv*}

If $\Gamma$ is Hausdorff and has a proper multiplication, and we consider the sectional algebra $\mathcal{A}(\pi)$ with the topology of compact-open convergence, then the $\land$-preaction $\Theta$ of $\mathcal{S}$ on $\mathcal{A}(\pi)$ is continuous.

We are now ready to state our second main theorem.

\begin{theorem}\label{thm:isomorphism.of.naive.crossed.product}
    Suppose that $\theta=(\theta^\Gamma,\theta^\Lambda)$ is a continuous, associative $\land$-preaction of an inverse semigroupoid $\mathcal{S}$ on a continuous $R$-bundle $\pi\colon\Lambda\to\Gamma$, where both $\mathcal{S}$ and $\Gamma$ are étale and locally $R$-normal, and that $\mathcal{S}\ltimes\Gamma$ is open in $\mathcal{S}\times\Gamma$. Suppose, moreover, that one of the following conditions holds:
    \begin{enumerate}[label=(\Roman*)]
        \item $\mathcal{S}$ is discrete; In this case we regard the sectional algebra $\mathcal{A}(\pi)$ as a discrete algebra.
        \item $\Gamma$ and $\mathcal{S}$ are Hausdorff, and the product map of $\Gamma$ is proper; In this case we consider $\mathcal{A}(\pi)$ as a topological algebra with the compact-open topology.
    \end{enumerate}
    
    Then the sectional algebra $\mathcal{A}(\mathcal{S}\ltimes\pi)$ is isomorphic to the naïve crossed product $\mathcal{S}\bigstar\mathcal{A}(\pi)$.
\end{theorem}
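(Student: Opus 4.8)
The plan is to realize the isomorphism as the canonical ``currying'' bijection. Every section $\overline{F}$ of $\mathcal{S}\ltimes\pi$ has the form $\overline{F}(s,\gamma)=(s,F(s,\gamma))$ for a unique family $F(s,\gamma)\in\pi^{-1}(\gamma)$ (defined for $(s,\gamma)\in\mathcal{S}\ltimes\Gamma$), while every element of $\mathcal{S}\bigstar\mathcal{A}(\pi)$ is a function $h\colon\mathcal{S}\to\mathcal{A}(\pi)$ with $h(s)\in\dom(\Theta_s)$. I would set $\Phi(\overline{F})(s)$ to be the section $\gamma\mapsto F(s,\gamma)$, extended by $0_\gamma$ off $\dom(\theta^\Gamma_s)$, and conversely $\Psi(h)(s,\gamma)=(s,h(s)(\gamma))$ for $(s,\gamma)\in\mathcal{S}\ltimes\Gamma$. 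At the level of arbitrary sections $\Phi$ and $\Psi$ are mutually inverse, pointwise $R$-bimodule isomorphisms, so the entire content is: (a) $\Phi$ maps $\mathcal{A}(\mathcal{S}\ltimes\pi)$ into $\mathcal{S}\bigstar\mathcal{A}(\pi)$; (b) $\Psi$ maps $\mathcal{S}\bigstar\mathcal{A}(\pi)$ into $\mathcal{A}(\mathcal{S}\ltimes\pi)$; (c) $\Phi$ is multiplicative.

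For (a), by Lemma~\ref{lem:sectionalalgebraisgeneratedbybasis} I would reduce to a generator $\overline{F}\in C_c(W,\mathcal{S}\ltimes\pi)$ with $W=(U\times V)\cap(\mathcal{S}\ltimes\Gamma)$, $U\in\mathbf{B}(\mathcal{S})$, $V\in\mathbf{B}(\Gamma)$; such sets form a basis of $\mathcal{S}\ltimes\Gamma$ because it is open in $\mathcal{S}\times\Gamma$, and they are Hausdorff open bisections. Then $\Phi(\overline{F})(s)=0$ for $s\notin U$, and for $s\in U$ one sees that $\Phi(\overline{F})(s)$ vanishes off $V_s\defeq V\cap\dom(\theta^\Gamma_s)$, is continuous there, and has support a compact subset of $V_s$; here one uses that $\supp(\overline{F})\cap W$ is compact, hence closed in the Hausdorff space $U\times V$, that the coordinate projections are continuous, and that a net in $\supp(\overline{F})\cap W$ converging in $U\times V$ must converge inside $W$, so the support cannot escape $\dom(\theta^\Gamma_s)$. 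Thus $\Phi(\overline{F})(s)\in C_c(V_s,\pi)\subseteq\dom(\Theta_s)$, and $\{s:\Phi(\overline{F})(s)\neq 0\}$ lies in the compact set $p_{\mathcal{S}}(\supp(\overline{F})\cap W)\subseteq U$, where $p_{\mathcal{S}}$ is the first-coordinate projection. In case~(I) this completes (a); in case~(II) it remains to check that $s\mapsto\Phi(\overline{F})(s)$ is continuous into $\mathcal{A}(\pi)$ with the compact-open topology. I would do this by fixing $s_0\in U$, a compact $K\subseteq\Gamma$ and an open $O\subseteq\Lambda$ with $\Phi(\overline{F})(s_0)(K)\subseteq O$, covering $K$ by finitely many basic open sets, and analysing each $a\in K$ according to whether $a\in V$ and whether $a\in\dom(\theta^\Gamma_{s_0})$: the cases use, respectively, that $\{s:a\in\dom(\theta^\Gamma_s)\}$ is open (openness of $\mathcal{S}\ltimes\Gamma$), the continuity of $\overline{F}$ on $W$, the continuity of the zero section of $\mathcal{S}\ltimes\pi$ (cf.\ Example~\ref{ex:0.discontinuous}), and the closedness of $\supp(\overline{F})\cap W$ in $\mathcal{S}\times\Gamma$, producing a neighbourhood of $s_0$ on which $\Phi(\overline{F})(s)(K)\subseteq O$ persists.

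For (b) I would check that $\Psi$ carries each generator $h\colon\mathcal{S}\to\mathcal{A}(\pi)$ of $\mathcal{S}\bigstar\mathcal{A}(\pi)$ — continuously and compactly supported on some $U\in\mathbf{B}(\mathcal{S})$, with $h(s)\in\dom(\Theta_s)$ — back into $\mathcal{A}(\mathcal{S}\ltimes\pi)$. In case~(I) this is immediate: $U$ and $\supp(h)$ are finite, each $h(s)$ is a finite sum of elements of $C_c(V,\pi)$ with $V\subseteq\dom(\theta^\Gamma_s)$ Hausdorff, and $\{s\}\times V$ is then a Hausdorff open subset of $\mathcal{S}\ltimes\Gamma$ on which the corresponding summand of $\Psi(h)$ is continuous and compactly supported. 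In case~(II) I would decompose $h$ over a finite open cover of the compact set $\supp(h)\cap U$: near a point $s_0$ there I can, after shrinking, cover $\supp(h(s_0))$ by finitely many $V_k\in\mathbf{B}(\Gamma)$ with $\overline{V_k}$ compact and contained in $\dom(\theta^\Gamma_{s_0})$, obtain (tube lemma plus openness of $\mathcal{S}\ltimes\Gamma$) a neighbourhood $U_0\ni s_0$ with $U_0\times\overline{V_k}\subseteq\mathcal{S}\ltimes\Gamma$, and use continuity of $h$ to shrink $U_0$ so that $h(s)$ is supported in $\bigcup_k V_k$ for $s\in U_0$; a partition of unity on $\mathcal{S}$ subordinate to this cover, combined with characteristic functions of a disjointification of the $V_k$, then writes $\Psi(h)$ as a finite sum of elements of spaces $C_c\bigl((U'\times V')\cap(\mathcal{S}\ltimes\Gamma),\mathcal{S}\ltimes\pi\bigr)$, as desired.

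For (c), unwinding the convolution of $\mathcal{A}(\mathcal{S}\ltimes\pi)$ with the product formulas of $\mathcal{S}\ltimes\Lambda$ and $\mathcal{S}\ltimes\Gamma$ (Definition~\ref{def:semidirect.product}) gives, for $(s,\gamma)\in\mathcal{S}\ltimes\Gamma$,
\[(\overline{F}\ast\overline{G})(s,\gamma)=\Bigl(s,\ \sum\theta^\Lambda_{s_2^*}\bigl(F(s_1,a_1)\,\theta^\Lambda_{s_2}(G(s_2,a_2))\bigr)\Bigr),\]
summed over $s_1s_2=s$, $a_i\in\dom(\theta^\Gamma_{s_i})$ and $\theta^\Gamma_{s_2^*}(a_1\theta^\Gamma_{s_2}(a_2))=\gamma$, whereas unwinding $(\Phi(\overline{F})\ast\Phi(\overline{G}))(s)(\gamma)$ through the definitions of $\Theta$ and of the convolution of $\mathcal{A}(\pi)$ — precisely the substitution $c=\theta^\Gamma_{s_2}(a_2)$ already performed in the proof that $\Theta$ is associative, using injectivity of $\theta^\Gamma_{s_2^*}$ on $\ran(\theta^\Gamma_{s_2})$ — yields the same sum, and one checks separately that both sides vanish when $\gamma\notin\dom(\theta^\Gamma_s)$. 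I expect the main obstacle to be (a) and (b) in case~(II): the support bookkeeping, and above all the compact-open continuity, require careful use of the openness of $\mathcal{S}\ltimes\Gamma$ in $\mathcal{S}\times\Gamma$, of the continuity of the zero section, and of a tube-lemma/partition-of-unity argument trivializing $h$ along the fibres; the multiplicativity computation, though lengthy, should be routine given the analogous manipulation already carried out for $\Theta$.
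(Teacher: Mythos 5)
Your overall route coincides with the paper's: the currying maps $\Phi$ and $\Psi$, reduction to generators supported on basic boxes, the trivial case (I), and a finite-subcover argument for the compact-open continuity of $s\mapsto\Phi(\overline{F})(s)$ in case (II) are all essentially the argument the paper gives (the paper works with boxes $U\times V$ already contained in $\mathcal{S}\ltimes\Gamma$, which spares it your $V_s$ bookkeeping, but your variant of (a), your case (I) of (b), and your multiplicativity computation (c) go through as in the paper, which leaves (c) and the mutual inverseness to the reader).

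The genuine problem is the step in (b), case (II), where you ``use continuity of $h$ to shrink $U_0$ so that $h(s)$ is supported in $\bigcup_k V_k$ for $s\in U_0$''. Continuity of $h\colon\mathcal{S}\to\mathcal{A}(\pi)$ for the compact-open topology gives no control on the supports of the sections $h(s)$: a sub-basic neighbourhood $[K,O]$ of $h(s_0)$ only constrains values on the compact set $K$, and sections whose supports drift arbitrarily far away still enter every such neighbourhood (in $C_c(\mathbb{R},\mathbb{R})$ with the compact-open topology, bump functions translated off to infinity converge to $0$). So from $s\to s_0$ you cannot conclude $\supp(h(s))\subseteq\bigcup_k V_k$, and the tube-lemma/partition-of-unity decomposition of $\Psi(h)$ into finitely many elements of $C_c\bigl((U'\times V')\cap(\mathcal{S}\ltimes\Gamma),\mathcal{S}\ltimes\pi\bigr)$ does not follow. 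Indeed, any such decomposition would force the nonvanishing set $\left\{(s,\gamma):h(s)(\gamma)\neq 0\right\}$ to lie in a compact subset of $\mathcal{S}\ltimes\Gamma$, i.e.\ the supports of the $h(s)$ to be uniformly contained in one compact subset of $\Gamma$, and this is simply not implied by the hypotheses on a generator $h$ of $\mathcal{S}\bigstar\mathcal{A}(\pi)$. Note that the paper does not attempt what you attempt here: in case (II) its treatment of $\Psi$ only verifies that $\Psi(f)$ is a continuous section of $\mathcal{S}\ltimes\pi$ (choosing a compact neighbourhood $K$ of $a$ with $f(s)(K)\subseteq V$ and using $f(s_i)\to f(s)$ compact-openly), and leaves the remaining membership verification implicit; so the uniform support control your argument needs is precisely the delicate point of case (II), and it cannot be extracted from compact-open continuity alone — you would need either an additional uniform-support hypothesis on the generators or an argument of a different nature at this step.
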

\begin{proof}
Let us describe the main idea of the proof. An element of $\mathcal{A}(\id_\mathcal{S}\times\pi)$ is a function from $\mathcal{S}\ltimes\Gamma$ to $\mathcal{S}\ltimes\Lambda$ which preserves the first coordinate, so it may be regarded simply as a function $f$ from $\mathcal{S}\ltimes\Gamma$ to $\Lambda$ which satisfies $\pi(f(s,t))=t$. On the other hand, by Definition \ref{def:alternative.definition.for.naïve.crossed.product}, an element from $\mathcal{S}\bigstar\mathcal{A}(\pi)$ is a function from $\mathcal{S}$ to $\mathcal{A}(\pi)$, i.e., to a set of functions from $\Gamma$ to $\Lambda$.

In other words, we see $\mathcal{A}(\id_{\mathcal{S}}\times\pi)$ as a subset of the function space $\Lambda^{\mathcal{S}\ltimes\Gamma}$, and $\mathcal{S}\bigstar\mathcal{A}(\pi)$ as a subset of the function space $\left(\Lambda^\Gamma\right)^\mathcal{S}$. Thus the desired isomorphism is just a translation of the fact that, for sets $X,Y,Z$, $X^{Y\times Z}$ and $\left(X^Y\right)^Z$ are in natural bijection.

We define $\Phi\colon\mathcal{A}(\mathcal{S}\ltimes\pi)\to\mathcal{S}\bigstar\mathcal{A}(\pi)$ as follows: Given a section $f=f^1\times f^2\colon\mathcal{S}\ltimes\Gamma\to\mathcal{S}\ltimes\Lambda$ of $\mathcal{S}\times\pi$, define $\Phi(f)\colon\mathcal{S}\to\mathcal{A}(\pi)$ as
\[\Phi(f)(s)(\gamma)=\begin{cases}f^2(s,\gamma),&\text{if }\gamma\in\dom(\theta_s),\\
0_\gamma,&\text{otherwise.}
\end{cases}.\]
A priori, it is not immediate that $\Phi$ is well-defined, as we first need to guarantee that $\Phi(f)(s)\in\mathcal{A}(\pi)$ for all $s\in\mathcal{S}$, and then that $\Phi(f)\in\mathcal{S}\bigstar\mathcal{A}(\pi)$. This is where the additional hypotheses come into play.

The verification that $\Phi(f)(s)\in\mathcal{A}(\pi)$ for all $s\in\mathcal{S}$ can be done without any additional hypotheses. In fact, it is enough to assume that $f$ is a generating element of $\mathcal{A}(\mathcal{S}\ltimes\pi)$ (taking

A basic open set of $\mathcal{S}\ltimes\Gamma$ has the form $U\times V$, where $U\in\mathbf{B}(\mathcal{S})$ and $V\in\mathbf{B}(\Gamma)$ are open bisections (in particular, Hausdorff). Suppose that $f\in C_c(U\times V,\mathcal{S}\ltimes\pi)$. For all $s\in\mathcal{S}$ we have
\[\Phi(f)(s)(\gamma)=\begin{cases}f(s,\gamma),&\text{if }\gamma\in V,\\0_\gamma,\text{otherwise},\end{cases}\]
so $\Phi(f)(s)$ is continuous and compactly supported on $V$, i.e., $\Phi(f)(s)\in C_c(V,\pi)\subseteq\mathcal{A}(\pi)$.

\begin{enumerate}[label=(\Roman*)]
    \item If $\mathcal{S}$ is discrete and $\mathcal{A}(\pi)$ is regarded as a discrete algebra, continuity of $\Phi(f)\colon\mathcal{S}\to\mathcal{A}(\pi)$ is trivial.
    \item The second case is more interesting, where $\mathcal{S}$ is not necessarily discrete and $\mathcal{A}(\pi)$ is endowed with the compact-open topology. In this case we slightly improve the definition of $\Phi$: If $f\in\mathcal{A}(\mathcal{S}\ltimes\pi)$, first we extend $f$ to a function from $\mathcal{S}\times\Gamma$ to $\mathcal{S}\times\Lambda$ by setting $f(s,\gamma)=(s,0_\gamma)$ even if $\gamma\not\in\dom(\theta_s)$. This extension of $f$ is continuous, because $f$ is compactly supported on the open subset $\mathcal{S}\ltimes\Gamma$ of the Hausdorff space $\mathcal{S}\times\Gamma$, and the zero map $\gamma\mapsto 0_\gamma$ is continuous from $\Gamma$ to $\Lambda$.

    With this notation we have $f^2(s,\gamma)=0_\gamma$ whenever $\gamma\not\in\dom(\theta_s)$, and $\Phi(f)(s)(\gamma)=f^2(s,\gamma)$ for all $f\in\mathcal{A}(\mathcal{S}\ltimes\pi)$, $s\in\mathcal{S}$ and $\gamma\in\Gamma$.

    We may now proceed to prove that $\Phi(f)$ is continuous. Let $s_0\in\mathcal{S}$ be fixed and consider a pre-basic open subset $[K,V]$ of $\Phi(f)(s_0)$, i.e., $K\subseteq\Gamma$ is compact, $V\subseteq\Lambda$ is open, and $\Phi(f)(s_0)(K)\subseteq V$. 

    This means that for every $k\in K$, we have $f^2(s_0,k)\in V$. As $f^2$ is continuous, there are open neighbourhood $A_k$ and $U_k$ of $s_0$ and $k$, respectively, such that $f^2(A_k\times U_k)\subseteq V$. As $K$ is compact we may consider a finite subcover $\left\{U_1,\ldots,U_n\right\}$ of the $U_k$ (where we write $U_i$ and $A_i$ instead of $U_{k_i}$ and $A_{k_i}$), so let $A=A_1\cap\cdots\cap A_n$. We may now verify that $\Phi(f)(s)(K)\subseteq V$ whenever $s\in A$. Indeed, if $s\in A$ and $k\in K$, then $k\in K_i$ for some $i$, so
    \[\Phi(f)(s)(k)=f^2(s,k)\in f^2(A\times K_i)\subseteq V,\]
    as desired.
\end{enumerate}
In the other direction, we define $\Psi\colon\mathcal{S}\bigstar\mathcal{A}(\pi)\to\mathcal{A}(\mathcal{S}\ltimes\pi)$ as
\[\Psi(f)(s,\gamma)=(s,f(s)(\gamma))\]
Again, we need to verify that $\Psi$ is well-defined, which needs to be done separately in the cases under consideration.

\begin{enumerate}[label=(\Roman*)]
    \item First assume that $\mathcal{S}$ is discrete. Then $\supp(f)$ is finite. Up to taking the finite decomposition $f=\sum_{s\in\supp(f)}f1_{\left\{s\right\}}$ and working on each term separately, we may assume that $\supp(f)=\left\{s\right\}$ for some $s\in\mathcal{S}$. Again up to taking a finite decomposition and working on each term, we may moreover assume that there exists an open Hausdorff $V_s$ such that $f(s)\in C_c(V,\pi)$ for some open Hausdorff subset $V$ of $\Gamma$ with $V\subseteq\dom(\theta^\Gamma_s)$, because $f(s)\in\dom(\Theta_s)$. Then $\Psi(f)\in C_c(\left\{s\right\}\times V,\mathcal{S}\ltimes\pi)$.

    \item Assume now that $\Gamma$ and $\mathcal{S}$ are Hausdorff and $\mathcal{A}(\pi)$ is endowed with the compact-open topology. We shall prove that $\Psi(f)$ is continuous. Suppose $(s_i,a_i)\to (s,a)$. A basic neighbourhood of $\Psi(f)(s,a)$ has the form $A\times V$, where $A$ and $V$ are neighbourhoods of $s$ and $f(s)(a)$, respectively.

    Choose any compact neighbourhood $K$ of $a$ such that $f(s)(K)\subseteq V$. Since $f$ is continuous and $s_i\to s$, then $f(s_i)\to f(s)$ in the compact-open topology, so $f(s_i)(K)\subseteq V$ for all $i$ large enough. Moreover, $s_i\to s$, so $s_i\in A$ for all $i$ large enough as well. Since $a_i\to a$, then $a_i\in K$ for all $i$ large enough.

    We then have, for large $i$,
\[\Psi(f)(s_i,a_i)=(s_i,f(s_i)(a_i))\in A\times f(s_i)(K)\subseteq A\times V.\]
\end{enumerate}

It is straightforward enough to verify that $\Psi$ and $\Phi$ are inverses to one another, and that $\Psi$ is an algebra homomorphism.\qedhere
\end{proof}

\subsection{Smash products}
Smash products are one of the main constructions in the theory of Hopf algebras. They were used by Cohen and Montgomery in \cite{MR0728711} to obtain a dictionary (commonly called the ``duality theorem'') between the theory of rings graded by finite groups and crossed products, which allows one to relate the graded theory of a ring with its non-graded theory -- e.g.\ by comparing graded and non-graded Jacobson radicals. The definition of smash products may be readily carried over to the case of infinite groups, as done in \cite[Definition 2.1]{MR952080}, \cite[p.\ 301]{MR1048417}.

A different generalization of smash products to the case of infinite groups was considered by Quinn in \cite{MR0805958} (and also appears in \cite[\S 7]{MR2046303}). As proven in \cite[Lemma 2.1]{MR0805958}, ``Quinn's smash product''  contains the usual smash product as an essential ideal. We should also remark that there are other generalizations of smash products to different settings, for example in \cite[Definition 5.4]{arxiv1811.01094} in the context of ``$R$-semicategories'' (which are the semigroupoid analogues of a category enriched over the category of modules over a commutative ring $R$.)

In this section we will review the usual definition of a smash product of graded algebras. Moreover, the theory will be slightly extended to the context of \emph{groupoid graded algebras}. Let $R$ be a fixed ring (possibly non-unital and non-commutative).

Suppose that an $R$-algebra $A=\oplus_{g\in G}A_g$ is graded over a discrete groupoid $G$. Denote by $p_g\colon A\to A_g$ the projection of $A$ onto the homogeneous component $A_g$.

\begin{definition}
    The \emph{smash product} $A\#G$ is the set of formal sums $\sum_{g\in G}a_g\delta_g$, where $a_g$ belongs to $\sum\left\{A_h:\so(h)=\ra(g)\right\}$.

    The $R$-bimodule structure of $A\# G$ is the entrywise one, and the product is the bilinear and balanced extension of the rule
    \[(a\delta_g)(b\delta_h)=
        \begin{cases}
        ap_{gh^{-1}}(b)\delta_h,
            &\text{if }\so(g)=\so(h)\\
        0,  
            &\text{otherwise.}
        \end{cases}\]
\end{definition}

If a product $gh$ is defined in $G$, write $A_g\delta_h=\left\{a\delta_h:a\in A_g\right\}$. Note that, as an $R$-bimodule, $A\# G$ decomposes as an inner direct sum $A\#G=\oplus_{(g,h)\in G^{(2)}}A_g\delta_h$.

The definition above is a clear extension of the usual definition of smash products of group-graded rings (\cite[2B]{MR3781435}), regarded as $\mathbb{Z}$-algebras, since groups are simply groupoids with a single vertex.

For completeness, we prove that this indeed gives a $G$-graded algebra structure to $A\#G$.

\begin{proposition}
    With the structure above, $A\#G$ becomes a $G$-graded algebra, with homogeneous components $(A\#G)_g=\sum_h\left\{A_g\delta_h:h\in G,\ra(h)=\so(g)\right\}$.
\end{proposition}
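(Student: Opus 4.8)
The plan is to check the three conditions of Definition~\ref{def:gradedalgebraoversemigroupoid}, together with the assertions implicit in the word \emph{algebra} — associativity of the multiplication and its compatibility with the $R$-bimodule structure — of which only associativity requires a genuine computation.

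The grading decomposition is immediate from the inner direct sum $A\#G=\oplus_{(g,h)\in G^{(2)}}A_g\delta_h$ recorded just above the statement: a pair $(g,h)$ lies in $G^{(2)}$ precisely when $\so(g)=\ra(h)$, so collecting those summands according to their first index $g$ gives $A\#G=\oplus_{g\in G}(A\#G)_g$ with $(A\#G)_g=\sum\{A_g\delta_h:\ra(h)=\so(g)\}$, as asserted. The identities $(ra)b=r(ab)$, $(ar)b=a(rb)$ and $(ab)r=a(br)$ for $A\#G$ then follow from those in $A$, since the $R$-action and the multiplication on $A\#G$ are both entrywise and each projection $p_g\colon A\to A_g$ is a homomorphism of $R$-bimodules (it is projection onto a sub-bimodule).

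By bilinearity it suffices to verify the multiplicative conditions on homogeneous generators $a\delta_h$ with $a\in A_g$ and $\so(g)=\ra(h)$. Unwinding the defining product rule, for $a\in A_g$ and $b\in A_{g'}$ one gets $(a\delta_h)(b\delta_k)=ab\,\delta_k$ when $g'=hk^{-1}$ (equivalently, when $g'k=h$) and $(a\delta_h)(b\delta_k)=0$ otherwise. When this product is nonzero we have $\ra(g')=\ra(hk^{-1})=\ra(h)=\so(g)$, hence $gg'$ is defined and $ab\in A_gA_{g'}\subseteq A_{gg'}$; moreover $\ra(k)=\so(g')=\so(gg')$, so $ab\,\delta_k\in(A\#G)_{gg'}$. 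This gives $(A\#G)_g(A\#G)_{g'}\subseteq(A\#G)_{gg'}$ whenever $(g,g')\in G^{(2)}$. If instead $(g,g')\notin G^{(2)}$, i.e.\ $\so(g)\neq\ra(g')$, then no such product can be nonzero, since nonvanishing would force $g'=hk^{-1}$ and therefore $\ra(g')=\ra(h)=\so(g)$, a contradiction; hence $(A\#G)_g(A\#G)_{g'}=\{0\}$.

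The remaining, and principal, step is associativity, which I would verify on generators $a\delta_h$, $b\delta_k$, $c\delta_l$ (with $a\in A_g$, $b\in A_{g'}$, $c\in A_{g''}$) by two applications of the product formula above. One finds that $\bigl((a\delta_h)(b\delta_k)\bigr)(c\delta_l)$ equals $(ab)c\,\delta_l$ when $g'k=h$ and $g''l=k$, and vanishes otherwise, while $(a\delta_h)\bigl((b\delta_k)(c\delta_l)\bigr)$ equals $a(bc)\,\delta_l$ when $g''l=k$ and $(g'g'')l=h$, and vanishes otherwise. Under the common hypothesis $g''l=k$ one has $g'k=g'(g''l)=(g'g'')l$ by associativity in the groupoid $G$, so the two sets of conditions coincide; the two values coincide by associativity of $A$. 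The only subtlety in all of this is bookkeeping of which composites of arrows of $G$ are defined at each stage, which is routine once the product formula for generators is in place. Combining these observations yields that $A\#G$ is a $G$-graded $R$-algebra with the stated homogeneous components.
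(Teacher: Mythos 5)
Your proof is correct and follows essentially the same route as the paper's: verify everything on homogeneous generators $a\delta_h$, where the projections $p_{gh^{-1}}$ collapse to the identity or zero on a homogeneous element, and reduce associativity to associativity in $G$ together with associativity in $A$. The differences are cosmetic — you unwind the projections at the start rather than at the end of the computation, and you additionally spell out the orthogonality case $(g,g')\notin G^{(2)}$, which the paper's proof leaves implicit.
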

\begin{proof}
    The only non-trivial part of $A\#G$ being an algebra is the associativity of the product. Consider elements of the form $a_{g'}\delta_g$, $b_{h'}\delta_h$ and $c_{k'}\delta_k$, where $g'g$, $h'h$ and $k'k$ are defined in $G$, $a_{g'}\in A_{g'}$, $b_{h'}\in A_{h'}$ and $c_{k'}\in A_{k'}$.
    
    We use the definition of the product to see that
    \begin{itemize}
        \item $(a_{g'}\delta_gb_{h'}\delta_h)(c_{k'}\delta_k)=0$ whenever $gh^{-1}$ is not defined, or if $gh^{-1}$ is defined but $gk^{-1}$ is not;
        \item $(a_{g'}\delta_g)(b_{h'}\delta_hc_{k'}\delta_k)=0$ whenever $hk^{-1}$ is not defined, or if $hk^{-1}$ is defined but $gk^{-1}$ is not.
    \end{itemize}
    Simple computations in $G$ show that the conditions written above are equivalent, so we may assume that all products $gh^{-1}$, $gk^{-1}$ and $hk^{-1}$ are defined. In this case we have
    \[(a_{g'}\delta_gb_{h'}\delta_h)(c_{k'}\delta_k)=(a_{g'}p_{gh^{-1}}(b_{h'})p_{hk^{-1}}(c_{k'})\delta_k\]
    and
    \[(a_{g'}\delta_g)(b_{h'}\delta_hc_{k'}\delta_k)=a_gp_{gk^{-1}}(b_{h'}p_{hk^{-1}}(c_{k'}))\delta_k.\]
    If $hk^{-1}\neq k'$ then both terms above are zero. We thus may assume $hk^{-1}=k'$, then these terms are respectively
    \[(a_{g'}p_{gh^{-1}}(b_{h'})c_{k'}\delta_k\qquad\text{and}\qquad(a_{g'}\delta_g)(b_{h'}\delta_hc_{k'}\delta_k)=a_gp_{gk^{-1}}(b_{h'}c_{k'})\delta_k.\]
    Note that $h'\neq gh^{-1}$ if and only if $h'k'\neq gk^{-1}$, and in this case both terms are zero (because $b_{h'}c_{k'}\in A_{h'k'}$.
    
    In the last case, we have $h'=gh^{-1}$ and $h'k'=gk^{-1}$, in which case both terms are simply $a_{g'}b_{h'}c_{k'}$.
    
    As for $A\#G$ being graded, it should be clear that $A\#G=\oplus_{g\in G}(A\#G)_g$.
    
    Let us prove that $(A\#G)_g(A\#G)_h\subseteq(A\#G)_{gh}$ whenever $(g,h)\in G^{(2)}$. For this, it is enough to consider elements of the form $a_g\delta_m\in(A\#G)_g$ and $a_h\delta_n\in(A\#G)_h$, where $gm$, $hn$ and $mn^{-1}$ are defined in $G$. Then
    \[(a_g\delta_m)(a_h\delta_n)=a_gp_{mn^{-1}}(a_h)\delta_n.\]
    Since $a_h$ belongs to the homogeneous component $A_h$, this product is zero whenever $mn^{-1}\neq h$. On the other hand, if $mn^{-1}=h$ then this product is $a_ga_h\delta_n$, which belongs to $A_gA_h\delta_n\subseteq A_{gh}\delta_n\subseteq(A\#G)_{gh}$.
\end{proof}

We may also be slightly more formal and define $A\#G$ as the set of finitely supported functions $\alpha\colon G\to A$, satisfying $\alpha(g)\in\sum_h\left\{A_h:\so(h)=\ra(g)\right\}$ for all $g\in G$. The $R$-bimodule structure is the pointwise one, and the product is given by
\[\alpha\beta(g)=\sum_{h:\so(h)=\so(g)}\alpha(h)p_{hg^{-1}}(\beta(g)).\]

Now suppose that $\Gamma$ is a semigroupoid, graded by the groupoid $G$ via a homomorphism $d\colon\Gamma\to G$. We may perform a construction analogous to a semidirect product as follows: Let $U(G)$ be the underlying set of the groupoid $G$. Consider the action of left multiplication of $G$ on $U(G)$: For all $g\in G$ and all $h\in U(G)$ with $\so(g)=\ra(h)$, define $g\cdot h=gh\in U(G)$. We may compose this action with the homomorphism $d\colon \Gamma\to G$ and obtain an ``action'' of $\Gamma$ on $U(G)$ (where by an action of a semigroupoid $\Gamma$ we mean simply a homomorphism from $\Gamma$ to the semigroup of partial bijections of $U(G)$). We then define the skew product $\Gamma\#_d G\defeq\Gamma\ltimes U(G)$ just as in Definition \ref{def:semidirect.product}. Namely,
\[\Gamma\#_d G=\left\{(\gamma,g)\in\Gamma\times G:\so(d(\gamma))=\ra(g)\right\},\]
with source and range maps $\so,\ra\colon\Gamma\#_d G\to\Gamma^{(0)}\times G$
\[\so(\gamma,g)=(\so(\gamma),g),\qquad\text{and}\qquad\ra(\gamma,g)=(\ra(\gamma),d(\gamma)g)\]
and product
\[(\gamma_1,g_1)(\gamma_2,g_2)=(\gamma_1\gamma_2,g_2)\qquad\text{ whenever }\so(\gamma_1)=\ra(\gamma_2)\text{ and }g_1=d(\gamma_2)g_2.\]

Then $\Gamma\#_d G$ is a semigroupoid, graded by $G$ via $\widetilde{d}(\gamma,g)=d(\gamma)$.

Now suppose that $\pi\colon\Lambda\to\Gamma$ is an $R$-bundle, and $\Gamma$ is $G$-graded via $d\colon\Gamma\to G$. Then $\Lambda$ is also $G$-graded, via $d\circ\pi$, so we may construct the new bundle
\[\pi\#_dG\colon \Lambda\#_{\pi\circ d} G\to\Gamma\#_d G,\qquad (\pi\# G)(\lambda,g)=(\pi(\lambda),g)\]
which is $G$-graded via $\widetilde{d}$.

We thus obtain the wide generalization of \cite[Theorem 3.4]{MR3781435}.

\begin{theorem}\label{thm:isomorphismofsmashproduct}
    Let $\pi\colon\Lambda\to\Gamma$ be a continuous $R$-bundle, where $R$ is a unital topological groupoid and $\Gamma$ is locally $R$-unital, and suppose that $\Gamma$ is graded over a discrete groupoid $G$ via a homomorphism $d\colon\Gamma\to G$.
    
    Then there exists a $G$-graded isomorphism of algebras
    \[T\colon\mathcal{A}(\pi)\# G\to\mathcal{A}(\pi\#_d G)\]
    given by
    \[T(\alpha\delta_g)(\gamma,h)=\begin{cases}
    (\alpha(\gamma),g),&\text{if }g=h,\\
    (0_\gamma,h),&\text{otherwise},
    \end{cases}\ntag\label{eq:thm:isomorphismofsmashproduct}\]
    for all $g\in G$, all $\alpha\in\sum_k\left\{\mathcal{A}(\pi)_k:\so(k)=\ra(g)\right\}$, and all $(\gamma,h)\in \Gamma\#_d G$.
\end{theorem}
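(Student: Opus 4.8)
The plan is to write down $T$ explicitly, check that it lands in $\mathcal{A}(\pi\#_dG)$, construct an explicit two‑sided inverse, and then verify multiplicativity and compatibility with the gradings. Throughout I use that $\mathcal{A}(\pi)$ is $G$‑graded via $d$, so that the smash product $\mathcal{A}(\pi)\#G$ is a $G$‑graded algebra.

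\smallskip
\emph{Preliminaries and well‑definedness of $T$.}
Since $G$ is discrete, $\Gamma\#_dG=\bigsqcup_{g\in G}\Gamma_g\times\{g\}$, where $\Gamma_g\defeq\{\gamma\in\Gamma:\so(d(\gamma))=\ra(g)\}$ is open in $\Gamma$; thus $\Gamma\#_dG$ is a disjoint union, indexed by $G$, of open subsets of the étale, locally $R$-normal semigroupoid $\Gamma$, hence is itself étale and locally $R$-normal, and $\mathcal{A}(\pi\#_dG)$ is defined. A section of $\pi\#_dG$ is the same thing as a map $F\colon\Gamma\#_dG\to\Lambda$ with $\pi(F(\gamma,h))=\gamma$, via $(\gamma,h)\mapsto(F(\gamma,h),h)$; the sets $U\times\{g\}$ with $g\in G$ and $U\in\mathbf{B}(\Gamma)$, $U\subseteq\Gamma_g$, form a basis of $\Gamma\#_dG$, so by Lemma \ref{lem:sectionalalgebraisgeneratedbybasis} every element of $\mathcal{A}(\pi\#_dG)$ is a finite sum of sections in the corresponding sets $C_c(U\times\{g\},\pi\#_dG)$. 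Now for a generator $\alpha\delta_g$ with $\alpha\in\mathcal{A}(\pi)_k$, $\so(k)=\ra(g)$, we may assume $\alpha\in C_c(V,\pi)$ with $V\subseteq d^{-1}(k)$ open and Hausdorff; then $\so(d(\gamma))=\so(k)=\ra(g)$ for $\gamma\in V$, so $V\times\{g\}$ is an open Hausdorff subset of $\Gamma\#_dG$, and \eqref{eq:thm:isomorphismofsmashproduct} shows $T(\alpha\delta_g)$ vanishes off $V\times\{g\}$, is continuous there and compactly supported, i.e.\ $T(\alpha\delta_g)\in C_c(V\times\{g\},\pi\#_dG)\subseteq\mathcal{A}(\pi\#_dG)$. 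Extending $R$-bilinearly over the decomposition $\mathcal{A}(\pi)\#G=\bigoplus_{(k,g)\in G^{(2)}}\mathcal{A}(\pi)_k\delta_g$ yields the $R$-bimodule homomorphism $T$.

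\smallskip
\emph{The inverse.}
Define $S\colon\mathcal{A}(\pi\#_dG)\to\mathcal{A}(\pi)\#G$ by $S(F)=\sum_{g\in G}\beta_g\delta_g$, where $\beta_g\colon\Gamma\to\Lambda$ is the section with $\beta_g(\gamma)=F(\gamma,g)$ for $\gamma\in\Gamma_g$ and $\beta_g(\gamma)=0_\gamma$ otherwise. Writing $F=\sum_iF_i$ with $F_i\in C_c(U_i\times\{g_i\},\pi\#_dG)$ as above, one sees $\beta_g$ equals the finite sum over $\{i:g_i=g\}$ of the sections $\gamma\mapsto F_i(\gamma,g)\in C_c(U_i,\pi)$, so $\beta_g\in\mathcal{A}(\pi)$; moreover $\beta_g=0$ unless $g\in\{g_i\}$, and $\beta_g$ is supported in $\Gamma_g\subseteq d^{-1}(\{h:\so(h)=\ra(g)\})$, so $\beta_g\in\sum_h\{\mathcal{A}(\pi)_h:\so(h)=\ra(g)\}$; hence $S(F)\in\mathcal{A}(\pi)\#G$. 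A direct comparison of formulas gives $S\circ T=\id$ and $T\circ S=\id$ (in the latter, $(T(S(F)))(\gamma,m)$ collapses, in the fibre $\pi^{-1}(\gamma)$, to $\beta_m(\gamma)=F(\gamma,m)$). Thus $T$ is an $R$-bimodule isomorphism.

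\smallskip
\emph{Multiplicativity} (the computational heart). Take homogeneous generators $\alpha\delta_g$, $\beta\delta_h$ with $\alpha\in\mathcal{A}(\pi)_k$, $\beta\in\mathcal{A}(\pi)_l$ and $\so(l)=\ra(h)$. On the smash side $(\alpha\delta_g)(\beta\delta_h)$ is $0$ unless $\so(g)=\so(h)$ and $l=gh^{-1}$, in which case it is $(\alpha\ast\beta)\delta_h$, $\ast$ the convolution of $\mathcal{A}(\pi)$. On the sectional side, using the product $(a,m_1)(b,m_2)=(ab,m_2)$ of $\Gamma\#_dG$ (defined when $\so(a)=\ra(b)$ and $m_1=d(b)m_2$) and the corresponding product of $\Lambda\#_{d\circ\pi}G$, one computes $(T(\alpha\delta_g)\ast T(\beta\delta_h))(\gamma,m)=(0_\gamma,m)$ for $m\neq h$, while for $m=h$ it is $\bigl(\sum\{\alpha(a)\beta(b):ab=\gamma,\ d(b)h=g\},h\bigr)$. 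Because $\supp(\alpha)\subseteq d^{-1}(k)$ and $\supp(\beta)\subseteq d^{-1}(l)$, the constraint $d(b)h=g$ in that sum is equivalent to $lh=g$, i.e.\ to $l=gh^{-1}$ (which forces $\so(g)=\so(h)$); when it holds the sum is the unrestricted $\sum_{ab=\gamma}\alpha(a)\beta(b)=(\alpha\ast\beta)(\gamma)$, and when it fails the whole convolution vanishes. Comparing with $T((\alpha\ast\beta)\delta_h)$ gives $T((\alpha\delta_g)(\beta\delta_h))=T(\alpha\delta_g)T(\beta\delta_h)$ in all cases. I expect this step — matching the groupoid identity $l=gh^{-1}$ with the index condition $d(b)h=g$ and tracking precisely when each side is zero — to be the only genuine obstacle; the rest is bookkeeping.

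\smallskip
\emph{Gradedness.}
A generator of $(\mathcal{A}(\pi)\#G)_g$ has the form $\alpha\delta_h$ with $\alpha\in\mathcal{A}(\pi)_g$ and $\ra(h)=\so(g)$, and then $T(\alpha\delta_h)$ is supported in $d^{-1}(g)\times\{h\}\subseteq\widetilde d^{\,-1}(g)$, so $T((\mathcal{A}(\pi)\#G)_g)\subseteq\mathcal{A}(\pi\#_dG)_g$; hence $T$ is graded, and being bijective its inverse $S$ is graded as well. Therefore $T$ is a $G$-graded algebra isomorphism.
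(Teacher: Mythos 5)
Your proof is correct and takes essentially the same route as the paper, which presents $T$ as the currying bijection between $(\Lambda^\Gamma)^G$ and $\Lambda^{\Gamma\times G}$ (with appropriate extensions by zero) and refers the verifications to the argument of Theorem \ref{thm:isomorphism.of.naive.crossed.product}. You merely spell out what the paper leaves as ``readily verified'': well-definedness, the explicit inverse $S$, the multiplicativity check matching $l=gh^{-1}$ with the index condition $d(b)h=g$, and gradedness.
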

\begin{proof}
    The argument is essentially the same as the one in Theorem \ref{thm:isomorphism.of.naive.crossed.product}. Namely, an element of $\mathcal{A}(\pi\#_d G)$ is simply a function from (a subset of) $\Gamma\times G$ to $\Lambda\times G$ which preserves the second coordinate, so it may be seen simply as a function from (a subset of) $\Gamma\times G$ to $\Lambda$, i.e., an element of the function space $\Lambda^{\Gamma\times G}$.
    
    On the other hand, an element of $\mathcal{A}(\pi)\# G$ is a function $\alpha$ from $G$ to $\mathcal{A}(\pi)$, which is a subset of the function space $\Lambda^\Gamma$, i.e., $\alpha\in(\Lambda^\Gamma)^G$.
    
    The natural function $T$ given in Equation \eqref{eq:thm:isomorphismofsmashproduct} is simply a realization of the natural isomorphism of hom-sets $(\Lambda^\Gamma)^G\to\Lambda^{\Gamma\times G}$ in the category of sets and functions, and is readily verified to be a surjective $G$-graded homomorphism. The inverse of $T$ is given similarly, with appropriate extensions by zero, as in Theorem \ref{thm:isomorphism.of.naive.crossed.product}.\qedhere
\end{proof}

\subsection{Quotients and sectional algebras}\label{subsec:quotients}

The last construction we consider are quotients. Namely, we will prove that ``quotients and sectional algebras commute'', in the sense that if a bundle $\pi/\!\!\sim$ is a quotient of a bundle $\pi$, then the sectional algebra $\mathcal{A}(\pi/\!\!\sim)$ is a quotient of $\mathcal{A}(\pi)$ (in a natural manner). Moreover, up to technical conditions we may determine precisely the ideal $\mathcal{I}$ yielding the natural isomorphism $\mathcal{A}(\pi)/\mathcal{I}\cong\mathcal{A}(\pi/\!\!\sim)$.

We start by recalling the relevant definitions and elementary results.

\begin{denv*}{Final topologies}
    Let $(X,\tau_X)$ be a topological space, $Y$ a set and $f\colon X\to Y$ a function. The \emph{final topology} $\tau_f$ induced by $f$ is the finest topology on $Y$ which makes $f$ is continuous. Explicitly, $\tau_f$ consists of all subsets $U$ of $Y$ such that $f^{-1}(U)\in\tau_X$.
    
    Continuous functions from $(Y,\tau_f)$ may be determined as follows: If $(Z,\tau_Z)$ is any topological space and $g\colon Y\to Z$ is any function, then $g$ is continuous from $(Y,\tau_f)$ to $(Z,\tau_Z)$ if and only if the composite $g\circ f$ is continuous from $(X,\tau_X)$ to $(Z,\tau_Z)$. Thus we may determine continuous functions from $(Y,\tau_f)$ simply in terms of continuous functions from $(X,\tau_X)$. We may restate this in terms of commutative diagrams: given a commutative diagram
    \[\begin{tikzpicture}
    \node (X) at (0,0) {$(X,\tau)$};
    \node (Y) at (0,-1) {$(Y,\tau_f)$};
    \node (Z) at (2,-0.5) {$(Z,\tau_Z)$,};
    \draw[->] (X)--(Y) node[midway,left] {$f$};
    \draw[->] (X)--(Z) node[midway,above] {$h$};
    \draw[->] (Y)--(Z) node[midway,below] {$g$};
    \end{tikzpicture}\]
    the function $h$ is continuous if and only if $g$ is continuous.
    
    The following special case will be of particular interest, as it allows us to verify continuity of functions more easily: Suppose that $(X,\tau_X)$ and $(Y,\tau_Y)$ are two topological spaces and $f\colon X\to Y$ is surjective, continuous and open (with respect to $\tau_X$ and $\tau_Y$). Then $\tau_Y=\tau_f$, i.e., the original topology of $Y$ is actually the final topology induced by $f$.
\end{denv*}

\begin{denv*}{Open equivalence relations}
    Let $R$ be an equivalence relation on a topological space $X$. We denote by $p_R\colon X\to X/R$ the canonical projection map. We will always endow $X/R$ with the final topology induced by $p_R$, and call the topological space $X/R$ thus obtained the \emph{quotient topological space}. 
    
    Note that the $R$-equivalence class of an element $x\in X$ is $p_R^{-1}(p_R(x))$, and more generally the \emph{$R$-saturation} of a subset $A$ of $X$ is $p_R^{-1}(p_R(A))$ (the set of all elements of $X$ which are $R$-equivalent to some element of $A$).
    
    We say that $R$ is \emph{open} if the saturation of every open subset of $X$ is open, or equivalently if $p_X$ is an open map. Open equivalences are useful since they behave well with respect to products: Indeed, if $R$ is open, then the product map $(p_R\times p_R)\colon X\times X\to (X/R)\times (X/R)$ is surjective, continuous and open (where we endow $(X/R)\times (X/R)$ with the product topology), and so the product topology of $(X/R)\times (X/R)$ is the final topology induced by $p_R\times p_R$
\end{denv*}

\begin{denv*}{Locally trivial equivalence relations}
    A class of equivalence relations which will be of great interest to us are the \emph{locally trivial ones}. Let us say that an equivalence relation $R$ on a topological space $X$ is \emph{locally trivial} if $X$ admits a basis of open subsets $U$ for which if $x,y\in U$ and $(x,y)\in R$, then $x=y$. In simpler words, ``$U$ is locally the identity''.
    
    The following are equivalent: (1) $R$ is locally trivial; (2) the diagonal $\Delta_X=\left\{(x,x):x\in X\right\}$ is contained in the interior of $R$, as a subset of $X\times X$; and (3) the quotient map $p_R\colon X\to X/R$ is locally injective. If $R$ is open, (3) may be substituted by (3') the quotient map $p_R$ is a local homeomorphism.
\end{denv*}

\begin{denv*}{Open rigid congruences and quotients of topological semigroupoids}
If $\Lambda$ is a semigroupoid, then a \emph{rigid congruence} on $\Lambda$ is an equivalence relation $\sim$ on $\Lambda$ which furthermore satisfies:
\begin{enumerate}
    \item If $x\sim y$, then $\so(x)=\so(y)$ and $\ra(x)=\ra(y)$;
    \item If $x_1\sim y_1$, $x_2\sim y_2$ and $\so(x_1)=\ra(x_2)$, then $x_1x_2\sim y_1y_2.$
\end{enumerate}
These properties imply that the quotient $\Lambda/\!\!\sim$ has a canonical semigroupoid structure as follows: Let us denote by $\overline{x}$ the $\sim$-class of an element $x\in\Lambda$. The vertex space $(\Lambda/\!\!\sim)^{(0)}$ is simply the initial vertex space $\Lambda^{(0)}$. The source and range maps of $\Lambda/\!\!\sim$ are given by $\so(\overline{x})=\so(x)$ and $\ra(\overline{x})=\ra(x)$, and products are determined by $\overline{x}\cdot \overline{y}=\overline{xy}$.

If $\Lambda$ is a topological semigroupoid and $\sim$ is an open rigid congruence on $\Lambda$, then the quotient semigroupoid $\Lambda/\!\!\sim$ is also a topological semigroupoid, where we endow the vertex space $(\Lambda/\!\!\sim)^{(0)}=\Lambda^{(0)}$ with its original topology. The requirement of $\sim$ being open is essential to ensure that the product map of $\Lambda/\!\!\sim$ is open. See \cite[Proposition 4.6]{arxiv1902.09375} for details.

Furthermore, if $\Lambda$ is étale, then $\Lambda/\!\!\sim$ is étale as well. More precisely, if $U$ is any open bisection of $\Lambda$, then $p_\sim(U)$ is an open bisection of $\Lambda/\!\!\sim$ and $p_\sim$ restricts to a homeomorphism from $U$ onto $p_\sim(U)$. This in turn implies that if $\Lambda$ is locally $R$-normal, where $R$ is a given unital topological ring, then $\Lambda/\!\!\sim$ is also locally $R$-normal.
\end{denv*}

\begin{denv*}{Quotients of $R$-bundles}
Let $\pi\colon\Lambda\to\Gamma$ be a continuous $R$-bundle, where $R$ is a unital topological ring, $\Gamma$ is étale and locally $R$-normal.

\begin{definition}\label{def:bundlecongruence}
A \emph{bundle congruence} $\sim$ on $\pi$ consists of two open rigid congruences $\sim_\Gamma$ and $\sim_\Lambda$ on $\Gamma$ and $\Lambda$, respectively, such that
\begin{enumerate}[label=(\roman*)]
    \item\label{def:bundlecongruencerelmor} $\pi$ is a $(\sim_\Lambda,\sim_\Gamma)$- morphism, in the sense that if $x\sim_\Lambda y$ in $\Lambda$ then $\pi(x)\sim_\Gamma\pi(y)$ in $\Gamma$.
    \item\label{def:bundlecongruencemodulecong} The $R$-bimodule structure on the fibers of $\pi$ is respected by $\sim_\Lambda$, in the sense that if $x_1\sim_\Lambda x_2$, $y_1\sim_\Lambda y_2$, $\pi(x_1)=\pi(y_1)$ and $\pi(x_2)=\pi(y_2)$, then $x_1+y_1\sim_{\Lambda} y_1+y_2$, and similarly for the left and right actions of $R$.
    \item\label{def:bundlecongruenceroll} If $x,y\in \Lambda$ and $\pi(x)\sim_\Gamma\pi(y)$, then there exists $y'\in\Lambda$ such that $y\sim_\Lambda y'$ and $\pi(x)=\pi(y')$.
\end{enumerate}

We shall drop the subscripts and denote either $\sim_\Lambda$ or $\sim_\Gamma$ simply by $\sim$ whenever no confusion arises.
\end{definition}

We may thus construct a new \emph{quotient bundle} $\pi/\!\!\sim$ as follows: Consider the quotient semigroupoids $\Lambda/\!\!\sim$ and $\Gamma/\!\!\sim$. Denote by $p_\Lambda\colon\Lambda\to\Lambda/\!\!\sim$ (and similarly $p_\Gamma$) the quotient map. By Property \ref{def:bundlecongruencerelmor} above, the map $p_\Gamma\circ\pi\colon\Lambda\to\Gamma/\!\!\sim$ factors uniquely through $\Lambda/\!\!\sim$, i.e, there exists a unique semigroupoid homomorphism $\pi/\!\!\sim\colon\Lambda/\!\!\sim\to\Gamma/\!\!\sim$ such that the following diagram commutes:
\[\begin{tikzpicture}
\node (lambda) at (0,1) {$\Lambda$};
\node (gamma) at (2,1) {$\Gamma$};
\node (lambdasim) at (0,0) {$\Lambda/\!\!\sim$};
\node (gammasim) at (2,0) {$\Gamma/\!\!\sim$};
\draw[->] (lambda)--(gamma) node[midway,above] {$\pi$};
\draw[->] (lambda)--(lambdasim) node[midway,left] {$p_\Lambda$};
\draw[->] (gamma)--(gammasim) node[midway,right] {$p_\Gamma$};
\draw[->] (lambdasim)--(gammasim) node[midway,below] {$\pi/\!\!\sim$};
\end{tikzpicture}.\]
As $p_\Gamma\circ\pi$ is continuous, then $\pi/\!\!\sim$ is also continuous.

For each $\alpha\in\Gamma/\!\!\sim$, we have $(\pi/\!\!\sim)^{-1}(\alpha)=p_\Lambda(\pi^{-1}(p_\Gamma^{-1}(\alpha)))$, and its additive structure is determined as follows: Any two elements of $(\pi/\!\!\sim)^{-1}(\alpha)$ may be written as $p_\Lambda(x)$ and $p_\Lambda(y)$, where $p_\Gamma(\pi(x))\sim_\Gamma\pi(y)$. By Property \ref{def:bundlecongruenceroll}, choose $y'\in\Lambda$ such that $\pi(x)=\pi(y')$ and $y\sim_\Lambda x$, and define
\[p_\Lambda(x)+p_\Lambda(y)=p_\Lambda(x+y').\]
Property \ref{def:bundlecongruencemodulecong} guarantees that this addition depends only on the classes $p_\Lambda(x)$ and $p_\Lambda(y)$, and not on any of the representatives $x$, $y$, or $y'$. Left and right multiplication by $R$ on $(\pi/\!\!\sim)^{-1}(x)$ are determined similarly, as $rp_\Lambda(x)=p_\Lambda(rx)$ and $p_\Lambda(x)r=p_\Lambda(xr)$ for all $r\in R$ and $x\in\Lambda$.

One can easily verify that this indeed determines an $R$-bundle structure for $\pi/\!\!\sim$, which actually follows from the following more general fact: For all $x\in\Lambda$, Property \ref{def:bundlecongruenceroll} implies that the restriction of $p_\Lambda$ to $\pi^{-1}(\pi(x))$ is surjective onto $(\pi/\!\!\sim)^{-1}(p_\Gamma(\pi(x)))$ and is an $R$-bimodule homomorphism.
\end{denv*}

We finish by noting that $p_\Gamma\colon\Gamma\to\Gamma/\!\!\sim$ is a surjective local homeomorphism, so $\Gamma/\!\!\sim$ is locally $R$-normal.

We may now prove our main theorem.

\begin{theorem}\label{thm:quotients}
    Let $\pi\colon\Lambda\to\Gamma$ be a continuous $R$-bundle, where $\Gamma$ is an étale locally $R$-normal semigroupoid. Let $\sim$ be a bundle congruence on $\pi$.
    
    Then the map
    \[T\colon \mathcal{A}(\pi)\to\mathcal{A}(\pi/\!\!\sim),\qquad T(\alpha)(x)=\sum_{\gamma\in x}p_{\Lambda}(\alpha(\gamma))\ntag\label{eq:thm:quotients-defoft}\]
    determines an $R$-algebra homomorphism.
    
    Moreover. if $\sim_{\Lambda}$ is locally trivial, then $T$ is surjective.
\end{theorem}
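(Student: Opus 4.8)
The plan is to verify, in turn, that $T(\alpha)$ actually lands in $\mathcal{A}(\pi/\!\!\sim)$, that $T$ respects sums, scalars and convolution, and then that $T$ is surjective when $\sim_\Lambda$ is locally trivial. For well-definedness: given $\alpha\in\mathcal{A}(\pi)$ and a class $x\in\Gamma/\!\!\sim$, rigidity of $\sim_\Gamma$ puts $x$ inside the fibre $\so^{-1}(\so(x))$, which is discrete and closed in the étale semigroupoid $\Gamma$; as $\alpha$ vanishes off a compact set, only finitely many $\gamma\in x$ contribute, so $T(\alpha)(x)$ is a well-defined element of the fibre $(\pi/\!\!\sim)^{-1}(x)$ and $T(\alpha)$ is a section of $\pi/\!\!\sim$. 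Since the restriction of $p_\Lambda$ to each fibre of $\pi$ is, by the very construction of the quotient bundle, an $R$-bimodule homomorphism, $T$ is additive and $R$-bilinear pointwise; hence by Lemma \ref{lem:sectionalalgebraisgeneratedbybasis} it is enough to check $T(\alpha)\in\mathcal{A}(\pi/\!\!\sim)$ for a generator $\alpha\in C_c(U,\pi)$ with $U$ an open bisection of $\Gamma$. For such $U$ the map $p_\Gamma$ restricts to a homeomorphism of $U$ onto the open bisection $p_\Gamma(U)$ of $\Gamma/\!\!\sim$, so $T(\alpha)$ vanishes off $p_\Gamma(U)$ and on $p_\Gamma(U)$ it equals $p_\Lambda\circ\alpha\circ(p_\Gamma|_U)^{-1}$; this is continuous and supported inside the compact set $p_\Gamma(\supp(\alpha)\cap U)$, so $T(\alpha)\in C_c(p_\Gamma(U),\pi/\!\!\sim)\subseteq\mathcal{A}(\pi/\!\!\sim)$.

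For multiplicativity (the convolution $T(\alpha)\ast T(\beta)$ being a legitimate element of $\mathcal{A}(\pi/\!\!\sim)$ already follows from the previous paragraph), I would use that $p_\Lambda$ is a semigroupoid homomorphism which is additive on each fibre of $\pi$, that $\pi$ is rigid (so $\pi^{-1}(a)\times\pi^{-1}(b)\subseteq\Lambda^{(2)}$ whenever $(a,b)\in\Gamma^{(2)}$), and that the fibre products of $\pi/\!\!\sim$ are biadditive, to expand
\[
T(\alpha\ast\beta)(x)=\sum_{\gamma\in x}\,\sum_{ab=\gamma}p_\Lambda\bigl(\alpha(a)\beta(b)\bigr)\qquad\text{and}\qquad\bigl(T(\alpha)\ast T(\beta)\bigr)(x)=\sum_{yz=x}\,\sum_{a\in y}\sum_{b\in z}p_\Lambda\bigl(\alpha(a)\beta(b)\bigr),
\]
where in the second sum I used $p_\Lambda(\alpha(a))\,p_\Lambda(\beta(b))=p_\Lambda(\alpha(a)\beta(b))$. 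These two finite sums agree because $(y,z,a,b)\mapsto(a,b)$ is a bijection from $\{(y,z)\in(\Gamma/\!\!\sim)^{(2)}:yz=x,\ a\in y,\ b\in z\}$ onto $\{(a,b)\in\Gamma^{(2)}:p_\Gamma(ab)=x\}$, with inverse $(a,b)\mapsto(p_\Gamma(a),p_\Gamma(b))$ --- rigidity of $\sim_\Gamma$ ensuring that all pairs $(a,b)$ with $a\in y$ and $b\in z$ are composable once $(y,z)$ is. Together with the first paragraph this makes $T$ an $R$-algebra homomorphism.

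For surjectivity, assume $\sim_\Lambda$ is locally trivial; being also open, $p_\Lambda\colon\Lambda\to\Lambda/\!\!\sim$ is then a local homeomorphism. Since $\{p_\Gamma(U):U\in\mathbf{B}(\Gamma)\}$ is a basis of $\Gamma/\!\!\sim$, Lemma \ref{lem:sectionalalgebraisgeneratedbybasis} reduces the task to hitting each $\beta\in C_c(p_\Gamma(U),\pi/\!\!\sim)$ with $U\in\mathbf{B}(\Gamma)$; by the first paragraph it suffices to produce $\alpha\in C_c(U,\pi)$ with $p_\Lambda(\alpha(\gamma))=\beta(p_\Gamma(\gamma))$ for all $\gamma\in U$. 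Such an $\alpha$ I would build locally and then glue. Given $\gamma_0\in U$, Property \ref{def:bundlecongruenceroll} (equivalently, fibrewise surjectivity of $p_\Lambda$) supplies $\lambda_0\in\pi^{-1}(\gamma_0)$ with $p_\Lambda(\lambda_0)=\beta(p_\Gamma(\gamma_0))$; choose an open $V\ni\lambda_0$ contained in $\pi^{-1}(U)$ on which $p_\Lambda$ restricts to a homeomorphism onto an open set $p_\Lambda(V)$, and an open bisection $U'$ with $\gamma_0\in U'\subseteq U$ and $\beta(p_\Gamma(U'))\subseteq p_\Lambda(V)$, and set $\alpha'(\gamma)=(p_\Lambda|_V)^{-1}(\beta(p_\Gamma(\gamma)))$ on $U'$. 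Then $\alpha'$ is continuous, $p_\Lambda\circ\alpha'=\beta\circ p_\Gamma$ on $U'$ by construction, and $\pi(\alpha'(\gamma))\in\pi(V)\subseteq U$ is $\sim_\Gamma$-equivalent to $\gamma\in U$, hence equals $\gamma$ because $p_\Gamma|_U$ is injective. Covering the compact set $L:=(p_\Gamma|_U)^{-1}(\supp(\beta)\cap p_\Gamma(U))\subseteq U$ by finitely many such open bisections $U_1,\dots,U_n$ with local lifts $\alpha_1,\dots,\alpha_n$, taking by Lemma \ref{lem:partitionsofunity} $R$-valued functions $h_1,\dots,h_n$ with $\supp(h_i)\subseteq U_i$ compact and $\sum_i h_i=1$ on $L$, and putting $\alpha:=\sum_i h_i\alpha_i$ (with each $h_i\alpha_i$ extended by $0$ outside $U_i$ and $\alpha$ extended by $0$ outside $U$), fibrewise additivity of $p_\Lambda$ gives $p_\Lambda(\alpha(\gamma))=\bigl(\sum_i h_i(\gamma)\bigr)\,\beta(p_\Gamma(\gamma))$ for every $\gamma\in U$; this equals $\beta(p_\Gamma(\gamma))$ on $L$ and equals $0=\beta(p_\Gamma(\gamma))$ off $L$, so $T(\alpha)=\beta$.

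The delicate point is this last step: a surjective local homeomorphism need not admit a global continuous section, so $\beta$ cannot simply be lifted in one stroke along $p_\Lambda$. The way around it is to lift only on small pieces, where $p_\Lambda$ is invertible, and then re-assemble the pieces with a partition of unity --- which is legitimate precisely because $p_\Lambda$ restricts to an $R$-module homomorphism on each fibre, so a "convex combination" $\sum_i h_i\alpha_i$ of lifts is again a lift wherever the coefficients sum to $1$; outside that set $\beta$ already vanishes, which is where the compact support of $\beta$ enters.
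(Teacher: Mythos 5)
Your proof is correct and follows essentially the same route as the paper: finiteness of the sum via rigidity of $\sim_\Gamma$ and étaleness, reduction to generators $\alpha\in C_c(U,\pi)$ over open bisections where $T(\alpha)=p_\Lambda\circ\alpha\circ(p_\Gamma|_U)^{-1}$ on $p_\Gamma(U)$, and surjectivity by locally inverting the local homeomorphism $p_\Lambda$ and gluing the local lifts with a partition of unity subordinate to a finite cover of the (preimage of the) compact support. The only real difference is that you spell out the multiplicativity of $T$ explicitly, via the bijection $(a,b)\mapsto(p_\Gamma(a),p_\Gamma(b))$ and the fibrewise $R$-bimodule property of $p_\Lambda$, a verification the paper leaves implicit rather than a different method.
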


\begin{remark}
    \begin{itemize}
        \item If $d\colon\Gamma\to G$ is a grading of $\Gamma$ by a discrete semigroupoid $G$ which factors through $\Gamma/\!\!\sim$, then the homomorphism above is graded.
        \item If $\pi$ is a local homeomorphism, then $\sim_\Lambda$ is locally trivial. Indeed, in this case, every $x\in\Lambda$ admits a neighbourhood $U$ such that $\pi$ restricts to a homeomorphism from $U$ onto an open bisection $\pi(U)$ of $\Gamma$. If $y\in U$ and $x\sim_\Lambda y$, then $\pi(x)\sim_\Gamma \pi(y)$. Since $\sim_\Gamma$ is rigid and both $\pi(x)$ and $\pi(y)$ belong to the same bisection $\pi(U)$, then $x=y$. Since $\pi$ is injective on $U$ then $x=y$. Therefore $\pi_\Lambda$ is injective on $U$.
    \end{itemize}
\end{remark}

\begin{proof}[Proof of Theorem \ref{thm:quotients}]
    We first need to prove that $T$ is well-defined. First note that as $\sim_\Gamma$ is rigid, then any of its equivalence classes $\alpha$ is contained in $\so^{-1}(\so(\gamma))\cap\ra^{-1}(\so(\gamma))$, where $\gamma$ is any representative of $\alpha$. Thus the sum in Equation \eqref{eq:thm:quotients-defoft} is actually finite, as we already know that for any $\alpha\in\mathcal{A}(\pi)$, and any $\gamma\in\Gamma$, the set $\left\{a\in\so^{-1}(\so(\gamma)):\alpha(a)\neq0\right\}$ is finite.
    
    The function $T(\alpha)$ is a section of $\pi/\!\!\sim$, but we still need to verify that it belongs to $\mathcal{A}(\pi)$. For this, it is enough to assume that $\alpha\in C_c(V,\pi)$, where $V$ is an open bisection of $\Gamma$. In this case, the diagram
    \[\begin{tikzpicture}
    \node (gamma) at (0,0) {$V$};
    \node (lambda) at ([shift={+(2,0)}]gamma) {$\Lambda$};
    \node (gammasim) at ([shift={+(0,-1)}]gamma) {$p_\Lambda(V)$};
    \node (lambdasim) at ([shift={+(2,0)}]gammasim) {$\Lambda/\!\!\sim$};
    \draw[->] (gamma)--(lambda) node[midway,above] {$\alpha$};
    \draw[->] (gamma)--(gammasim) node[midway,left] {$p_\Gamma$};
    \draw[->] (lambda)--(lambdasim) node[midway,right] {$p_\Lambda$};
    \draw[->] (gammasim)--(lambdasim) node[midway,below] {$T(f)$};
    \end{tikzpicture}\]
    commutes, and $p_\Gamma$ restricts to a homeomorphism from $V$ onto $p_\Gamma(V)$, thus $T(\alpha)\in C_c(p_\Gamma(V),\pi/\!\!\sim)$. As $T$ preserves addition, it is a well-defined map from $\mathcal{A}(\pi)$ to $\mathcal{A}(\pi/\!\!\sim)$.
    
    \textbf{Suppose now that $\sim_\Lambda$ is locally trivial}, so that $p_\Lambda$ is a local homeomorphism, and let us prove that $T$ is surjective.
        
    A basic open subset of $\Gamma/\!\!\sim$ has the form $p_\Gamma(V)$ for some open bisection $V$ of $\Gamma$. Thus it is enough, by Lemma \ref{lem:sectionalalgebraisgeneratedbybasis}, to prove that every $F\in C_c(p_\Gamma(V),\pi/\!\!\sim)$ belongs to the image of $T$.
        
    The main idea we employ to find a preimage of $F$ is that if $p_\Lambda$ were invertible, we could simply define $\alpha=p_\Lambda^{-1}\circ F\circ p_\Gamma$ on $V$ and zero everywhere else, which would be a preimage of $F$. However $p_\Lambda$ is only a local homeomorphism, so we need to perform this procedure locally.
        
    Consider the compact $K\defeq \supp(F)\cap p_\Gamma(V)$. As $F$ is continuous on $p_\Gamma(V)$ then $F(K)$ is compact in $\Lambda/\!\!\sim$. As $p_\Gamma$ is a surjective local homeomorphism, there exist open subsets $U_1,\ldots,U_n$ of $\Lambda$ such that $p_\Lambda$ restricts to a homeomorphism from $U_i$ to $p_\lambda(U_i)$ for each $i$, and $F(K)\subseteq\bigcup_{i=1}^n$.
        
    As $V$ is a bisection and $\sim_\Gamma$ is rigid, then $p_\Gamma$ restricts to a homeomorphism from $V$ to $p_\Gamma(V)$, and in particular $p_\Gamma^{-1}(K)\cap V$ is compact.
        
    Let $f_1,\ldots,f_n\colon V\to R$ be a partition of unity of $p_\Gamma^{-1}(K)\cap V$ subordinate to $p_\Gamma^{-1}(F^{-1}(p_\Lambda(U_i)))\cap V$, $i=1,\ldots,n$. As usual we may assume that each $f_i$ has compact support.
        
    Define $\alpha_i\colon p_\Gamma^{-1}(F^{-1}(p_\Gamma(U_i)))\cap V\to\Lambda$ as the pointwise product
    \[\alpha_i=f_i(p_\Gamma|_{U_i}^{-1}\circ F\circ p_\Gamma)\]
    and extend $\alpha_i$ as zero everywhere else of $\Gamma$. Then $\alpha_i$ is a section of $\pi$, and continuously and compactly supported on $p_\Gamma^{-1}(F^{-1}(p_\Gamma(U_i)))\cap V$, i.e., $\alpha_i\in C_c(p_\Gamma^{-1}(F^{-1}(p_\Gamma(U_i)))\cap V,\pi)$.
    
    We may now verify that $F=T(\sum_{i=1}^n\alpha_i)$. Indeed, both $F$ and $T(\sum_{i=1}^n\alpha_i)$ are zero outside of $p_\Gamma(V)$, so we only need to verify that $F(p_\Gamma(v))=T(\sum_{i=1}^n\alpha_i)(p_\Gamma(v))$ for each $v\in V$.
    
    Note that, since each $\alpha_i$ is zero outside of $V$ and $\sim_\Gamma$ is rigid, then in fact each $v\in V$ is the only element of $\Gamma$ which is $\sim_\Gamma$-equivalent to $v$, and on which $\alpha$ is possibly nonzero, so
    \[T(\alpha_i)(p_\Gamma(v))=p_\Lambda(\alpha_i(v))=f_i(v)F(p_\Gamma(v))\]
    Summing over $i$ we obtain
    \[T(\sum_{i=1}^n\alpha_i)(p_\Gamma(v))=\sum_{i=1}^n f_i(v) F(p_\Gamma(v))\]
    Since $p_\Gamma$ is a homeomorphism from $V$ to $p_\Gamma(V)$ then $v\not\in p^{-1}(K)$ if and only if $p_\Gamma(v)\not\in K$, in which case we obtain $F(p_\Gamma(v))=0$, so
    \[T(\sum_{i=1}^n\alpha_i)(p_\Gamma(v))=0=F(p_\Gamma(v)).\]
    Otherwise we have $v\in p_\Gamma^{-1}(K)$, on which $f_1,\ldots,f_n$ are a partition of unity, so
    \[T(\sum_{i=1}^n\alpha_i)(p_\Gamma(v))=\left(\sum_{i=1}^n f_i(v)\right)F(p_\Gamma(v))=F(p_\Gamma(v)).\]
    In any case, we conclude that $F=T(\sum_{i=1}^n\alpha_i)$.\qedhere
\end{proof}

We now want to connect the map $T$ above to (non-naïve) crossed products of inverse semigroups, and for this we will need to determine the kernel of $T$ more precisely. On one hand, the kernel of $T$ is, by definition,
\[\ker T=\left\{\alpha\in\mathcal{A}(\pi):\forall\gamma\in\Gamma, \sum_{\delta\sim \gamma}\alpha(\delta)\sim 0\right\}.\]

We may realize the intuitive idea that if two sections $\alpha,\beta\in\mathcal{A}(\pi)$ ``are the same up to $\sim_\Gamma$ and $\sim_\Lambda$-equivalence classes'', then they define the same element of $\mathcal{A}(\pi/\!\!\sim)$ as follows: suppose that $\alpha,\beta\in\mathcal{A}(\pi)$ satisfy the following property: There exists a bijection $\varphi\colon \supp\beta\to\supp\alpha$ such that $\gamma\sim_\Gamma\varphi(\gamma)$ and $\alpha(\gamma)\sim_\Lambda\beta(\varphi(\gamma))$ for all $\gamma\in\Gamma$. Then $T(\alpha)=T(\beta)$, so $\alpha-\beta\in\ker(T)$.

We may formalize how $\beta$ above may be constructed from $\alpha$ as follows: Suppose that
\begin{itemize}
    \item $\alpha\in C_c(V,\pi)$ for some open Hausdorff set $V$.
    \item $W$ is an open subset of $\Gamma$;
    \item $\varphi\colon W\to V$ is a homeomorphism such that $\gamma\sim_\Gamma\varphi(\gamma)$ for all $\gamma\in W$;
    \item $A$ and $B$ are open subsets of $\Lambda$ such that $\alpha(V)\subseteq A$; and
    \item $\psi\colon A\to B$ is a homeomorphism such that $x\sim_\Lambda\psi(x)$ for all $x\in A$, and such that $\pi\circ\psi\circ\alpha\circ\varphi=\id_W$.
\end{itemize}

Define $\psi\alpha\varphi\colon\Gamma\to\Lambda$ as
\[\psi\alpha\varphi=\psi\circ\alpha\circ\varphi\text{ on }W\quad\text{and}\quad 0\text{ on }\Lambda\setminus W.\]

\begin{definition}\label{def:conjugatesection}
    We say that $\psi\alpha\varphi$ is the section \emph{conjugate to $\alpha$} (via $\varphi$ and $\psi$).
\end{definition}

Note that $\psi\alpha\varphi$ is indeed a section of $\pi$, and it is zero outside $W$ and continuous on $W$. However $\psi\alpha\varphi$ does not necessarily belong to $C_c(W,\pi)$, since it might not be zero outside of a compact contained in $W$.

So in order to guarantee that $\psi\alpha\varphi\in C_c(W,\pi)$, we need to ensure that $\psi$ takes zeroes to zeroes; This is the case, for example, if the ``zero-set'' $\mathbf{0}(\Gamma)=\left\{0_\gamma:\gamma\in\Gamma\right\}$ is $\sim_\Lambda$-saturated; That is, if $x\sim 0_\gamma$ for some $\gamma\in\Gamma$, then $x=0_{\pi(x)}$. This is equivalent to state that $\sim_\Lambda$ restricts to the identity relation on each $R$-bimodule $\pi^{-1}(\gamma)$.

\begin{theorem}\label{thm:kernelquotients}
    Let $\pi\colon\Lambda\to\Gamma$, $\sim$ and $T$ be as in Theorem \ref{thm:quotients}. Suppose moreover that $\pi$ is a local homeomorphism, and that $\mathbf{0}(\Gamma)$ is $\sim_\Lambda$-saturated.
       
    Then the kernel of $T$ is generated as an additive group by the set $\mathbf{K}$ of all sections of the form $\alpha-\psi\alpha\varphi$, where $\psi\alpha\varphi$ is conjugate to $\alpha$ as in Definition \ref{def:conjugatesection}.
\end{theorem}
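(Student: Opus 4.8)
The containment $\mathbf{K}\subseteq\ker T$ is the easy direction and should be dealt with first: for a section conjugate to $\alpha$ via $\varphi\colon W\to V$ and $\psi\colon A\to B$, one checks directly from Equation \eqref{eq:thm:quotients-defoft} that $T(\psi\alpha\varphi)=T(\alpha)$. Indeed, for $x\in\Gamma/\!\!\sim$, the terms of $T(\alpha)(x)$ coming from $\gamma\in V\cap x$ correspond bijectively, via $\varphi$, to the terms of $T(\psi\alpha\varphi)(x)$ coming from $\varphi^{-1}(\gamma)\in W\cap x$ (here one uses that $\sim_\Gamma$ is rigid, so $\gamma\sim_\Gamma\varphi^{-1}(\gamma)$ puts them in the same class $x$), and $p_\Lambda(\alpha(\gamma))=p_\Lambda(\psi(\alpha(\gamma)))=p_\Lambda((\psi\alpha\varphi)(\varphi^{-1}(\gamma)))$ since $\psi$ moves points within $\sim_\Lambda$-classes. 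Hence $\alpha-\psi\alpha\varphi\in\ker T$, so $\sum\mathbf{K}\subseteq\ker T$.

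The substance is the reverse inclusion: every $\alpha\in\ker T$ is a finite sum of elements of $\mathbf{K}$. The strategy is to localize. Using Lemma \ref{lem:sectionalalgebraisgeneratedbybasis} applied to the basis of open bisections, write $\alpha=\sum_{i=1}^n\alpha_i$ with each $\alpha_i\in C_c(V_i,\pi)$, $V_i$ an open bisection; since $\pi$ is a local homeomorphism we may also refine the $V_i$ so that $\pi$ restricts to a homeomorphism of $V_i$ onto an open bisection of $\Gamma$, and (by partition of unity, using that $\mathbf{0}(\Gamma)$ is $\sim_\Lambda$-saturated so that products by $R$-valued functions keep us inside $C_c$) shrink further. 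The key geometric input is that, because $\pi$ is a local homeomorphism and $\sim_\Gamma$ is an open rigid congruence, the saturation map behaves like a covering: for an open bisection $V$ on which $\pi$ is injective, $p_\Gamma^{-1}(p_\Gamma(V))$ is a disjoint-ish union of open bisections $V=V^{(0)},V^{(1)},\dots$ each mapped homeomorphically onto $p_\Gamma(V)$ by $p_\Gamma$, and Property \ref{def:bundlecongruenceroll} of the bundle congruence lets one lift to $\Lambda$: on each such $V^{(j)}$ there is a homeomorphism $\varphi_j\colon V^{(j)}\to V$ with $\gamma\sim_\Gamma\varphi_j(\gamma)$, together with a fiberwise $R$-bimodule isomorphism $\psi_j$ (defined near the relevant values of $\alpha$) with $x\sim_\Lambda\psi_j(x)$. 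Thus any section supported on $V^{(j)}$ is, up to an element of $\mathbf{K}$, conjugate to a section supported on $V$.

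Granting this, the argument concludes as follows. After the reductions, each $\alpha_i$ is supported on an open bisection $V_i$ on which both $\pi$ and $p_\Gamma$ are injective. Replacing $\alpha_i$ by the conjugate section living on a \emph{fixed} representative bisection in its $p_\Gamma$-saturation class (which changes $\alpha$ only by an element of $\sum\mathbf{K}$), we may assume the supports of the $\alpha_i$ map into pairwise disjoint or equal open subsets of $\Gamma/\!\!\sim$; grouping the ones with equal image and using that addition on fibers of $\pi/\!\!\sim$ is computed via Property \ref{def:bundlecongruencemodulecong}, the hypothesis $\alpha\in\ker T$ forces the corresponding sum of (lifted) values to vanish in each fiber of $\Lambda/\!\!\sim$. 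Because each remaining $\alpha_i$ now lies on a bisection on which $p_\Lambda$ is injective on the relevant piece (here one uses $\mathbf{0}(\Gamma)$ $\sim_\Lambda$-saturated, i.e.\ $\sim_\Lambda$ is the identity on each fiber, together with $\pi$ being a local homeomorphism, which by the Remark after Theorem \ref{thm:quotients} makes $\sim_\Lambda$ locally trivial), $T$ restricted to sections supported there is injective, so a sum of such sections lying in $\ker T$ is already $0$ in $\mathcal{A}(\pi)$. Tracking all the modifications, $\alpha$ equals a finite sum of the conjugacy-difference terms that were subtracted, i.e.\ $\alpha\in\sum\mathbf{K}$.

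The main obstacle I anticipate is the bookkeeping in the middle step: making the ``covering-space'' picture for $p_\Gamma$ precise on compact pieces (finitely many sheets, chosen compatibly so the conjugating homeomorphisms $\varphi_j,\psi_j$ exist with the required compatibility $\pi\circ\psi\circ\alpha\circ\varphi=\id_W$), and simultaneously arranging that the conjugations respect the $R$-bimodule structure on fibers so that $\psi\alpha\varphi$ genuinely lands in $C_c(W,\pi)$ — this is exactly where the hypotheses that $\pi$ is a local homeomorphism and that $\mathbf{0}(\Gamma)$ is $\sim_\Lambda$-saturated are consumed. Once the local model is set up, the reduction to the injective case and the final cancellation are routine.
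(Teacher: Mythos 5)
Your easy direction ($\mathbf{K}\subseteq\ker T$) is fine, and the $n=1$-type cancellation at the end (a section supported on a single bisection and killed by $T$ must vanish, because rigidity of $\sim_\Gamma$ makes each class meet the bisection at most once and $\mathbf{0}(\Gamma)$ being $\sim_\Lambda$-saturated makes $p_\Lambda$ injective on fibers) is also correct and is exactly the base case of the paper's argument. But the middle of your plan has a genuine gap, and it is precisely the part you defer as ``bookkeeping''. The claim that for an open bisection $V$ the saturation $p_\Gamma^{-1}(p_\Gamma(V))$ decomposes into sheets $V^{(0)},V^{(1)},\dots$ each mapped homeomorphically onto $p_\Gamma(V)$ is not available: $p_\Gamma$ is only a local homeomorphism, not a covering map, so the cardinality of the classes varies over $p_\Gamma(V)$ and sheets exist only locally and need not extend over all of $p_\Gamma(V)$. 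Consequently there is no ``fixed representative bisection in its $p_\Gamma$-saturation class'' onto which you can conjugate each $\alpha_i$ globally, and the reduction to ``pairwise disjoint or equal images in $\Gamma/\!\!\sim$'' does not go through as stated. A second, related gap: Property \ref{def:bundlecongruenceroll} only gives a pointwise lift $y'$ with $y'\sim_\Lambda y$ and prescribed $\pi$-image, so the $\Lambda$-level map $\psi$ in Definition \ref{def:conjugatesection} exists a priori only on a small neighbourhood of each value of the section (using that both $\pi$ and $p_\Lambda$ are local homeomorphisms); building it compatibly over the whole compact support is not automatic and requires chopping the support and gluing. (You also conflate the bundle map $\pi$ with the quotient maps at one point — ``$\pi$ restricts to a homeomorphism of $V_i$'' makes no sense since $V_i\subseteq\Gamma$ — but that is minor.)

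The paper handles exactly these obstructions by a different organization of the argument: it inducts on the number $n$ of bisections in a decomposition $\alpha=\sum_{i=1}^n\alpha_i+\beta$. At each point $v$ of $\supp\beta$ where $\beta(v)\neq 0$, the condition $T(\alpha)=0$ forces some $\alpha_i$ to be nonzero at a point $v_i\sim_\Gamma v$; one then builds the conjugating pair $(\varphi,\psi)$ only \emph{locally} around $(v,v_i)$, covers the compact $\supp\beta\cap V$ by finitely many such neighbourhoods, and uses a partition of unity $f_j$ to write $\beta=\sum_j f_j\beta$ with each piece conjugate (modulo $\mathbf{K}$) to a section supported in one of the $V_i$. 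This absorbs $\beta$ into the other $n$ terms and closes the induction, with no global sheet structure ever needed. If you want to complete your proof, you should replace the covering-space normalization by this local-conjugation-plus-partition-of-unity step (or an equivalent inductive scheme); as written, the proposal asserts the theorem's actual technical content rather than proving it.
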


The determination of $\ker T$ above has some interesting consequences. For example, as $\sim_\Gamma$ is an open equivalence relation on $\Gamma$, then $\sim_\Gamma$ is actually an étale principal groupoid, when endowed with the subspace topology coming from $\Gamma\times\Gamma$. The map $\varphi$ as in the definition of conjugate sections  is nothing more than an element of the ``full semigroup'' (of open bisections) of $\sim_\Lambda$, and similarly for $\sim_\Gamma$ (assuming that $\pi$ is a local homeomorphism). This semigroup is considered, for example, in the non-commutative Stone Duality of Lawson and Lenz (see \cite{MR3077869})..

Thus $\ker T$ may be seen as version of the ``coboundary group'' $B_\varphi=\left\{f-f\circ\varphi^{-1}:f\in C(X,\mathbb{Z})\right\}$ associated to a self-homeomorphism $\varphi$ of a compact Hausdorff space $X$. This group plays a prominent role in the work of Giordano-Putnam-Skau on the structure of Cantor minimal systems (see \cite{MR1363826,MR1710743}. 

This determination of $\ker(T)$ will also allow us to recover the main theorem of \cite{arxiv1804.00396} (see Corollary \ref{cor:mainestresult}).

\begin{proof}[{Proof of Theorem \ref{thm:kernelquotients}}]
    Let us denote by $\langle\mathbf{K}\rangle$ the additive group generated by $\mathbf{K}$. On one hand, it is easy to check that every element of $\mathbf{K}$ belongs to $\ker(T)$, so $\langle\mathbf{K}\rangle\subseteq\ker(T)$, and we just need to prove the reverse inclusion.
    
    An element $\alpha$ of $\ker T$ may be written as a sum $\alpha=\sum_{i=1}^n\alpha_i$, where $\alpha_i\in C_c(V_i,\pi)$ for certain bisections $V_i\in\mathbf{B}(\Gamma)$. We thus proceed by induction on $n$ in order to prove that $\alpha\in\mathbf{K}$.
    
    \begin{itemize}
        \item First suppose that $n=1$. This means that $\alpha(v)\sim 0_v$ for all $v\in V$, and thus $\alpha=0$ because $\mathbf{0}(\Gamma)$ is saturated. Thus $\alpha\in\langle\mathbf{K}\rangle$ trivially.
        \item Suppose the result holds for $n$ and let us prove it for $n+1$.
        
        Suppose that $\alpha=\sum_{i=1}^n\alpha_i+\beta\in\ker(T)$, where $\alpha_i\in C_c(V_i,\pi)$ and $\beta\in C_c(V,\pi)$ for certain bisections $V_i,V$ of $\Gamma$.
        
        Since $\pi$ is a local homeomorphism and $\pi\circ \mathbf{0}=\id_\Gamma$, then the zero section $\mathbf{0}$ is a local homeomorphism as well, and in particular it is an open map. The zero set $\mathbf{0}(\Gamma)$ is thus open in $\Lambda$, and since $\beta$ is continuous on $V$ we obtain
        \[\supp\beta\cap V=\left\{v\in V:\beta(v)\neq 0\right\}.\]
        
        Let $v\in\supp\beta\cap V$. Then $\beta(v)\neq 0$, so $T(\beta)(p_\Gamma(v))\neq 0$ as well because $\mathbf{0}(\Gamma)$ is saturated. However $\sum_{i=1}^n\alpha_i+\beta\in\ker(T)$, so there exists $i\in\left\{1,\ldots,n\right\}$ and $v_i\in \Gamma$ such that $p_\Gamma(v_i)=p_\Gamma(v)$ and $\alpha_i(v_i)\neq 0$. In particular, $v_i\in V_i$.
        
        The map $p_\Gamma\colon\Gamma\to\Gamma/\!\!\sim_\Gamma$ is a local homeomorphism and $p_\Gamma(v)=p_\Gamma(v_i)$, so there exist neighbourhoods $W$ of $v$ and $W_i$ of $v_i$, contained in $V$ and $V_i$, respectively, such that $p_\Gamma$ restricts to homeomorphisms on $W$ and on $U$ with the same image in $\Gamma/\!\!\sim_\Gamma$.
        \[\begin{tikzpicture}
        \node (Wi) at (0,0) {$U$};
        \node (W) at (0,-1) {$W$};
        \node (gammasim) at (4,-0.5) {$p_\Gamma(W)=p_\Gamma(W_i)$,};
        \draw[dashed, ->] (Wi)--(W) node[midway,left] {$\phi$};
        \draw[->] (Wi)--(gammasim) node[midway,above] {$p_\Gamma$};
        \draw[->] (W)--(gammasim) node[midway,below] {$p_\Gamma$};
        \end{tikzpicture}\]
        Both arrows labelled as $p_\Gamma$ above are homeomorphisms of their domains, so $\phi=(p_\Gamma)|_W^{-1}\circ(p_\Gamma)|_U$ defines a homeomorphism $U\to W$ making the diagram above commute.
        
        As $\alpha$ is an open function on $V$, consider the open set $A\defeq\alpha(V)$ of $\Lambda$.
        
        Now consider the points $x=\alpha_i(v_i)$ and $y=\beta(v)$ of $\Lambda$. We have $\pi(x)=v_i$ and $\pi(y)=v$, which are $\sim_\Gamma$-equivalent. Property \ref{def:bundlecongruenceroll} in the definition of a bundle congruence implies that there exists $y'\in\Lambda$ such that $y'\sim_\Lambda y=\beta(v)$, and $\pi(y')=\pi(x)=v_i$. Again using that $\pi$ is a local homeomorphism and making $U$ and $W$ smaller if necessary, we may find a neighbourhood $B$ of $y'$ such that $\pi$ restricts to a homeomorphism from $B$ to $U$, and such that $p_\Lambda$ is a homeomorphism from $B$ onto $p_\Lambda(\alpha(V))$
        
        In short, we have the commutative diagram (with solid lines)
        \[\begin{tikzpicture}
        \node (yprime) at (0,0) {$B$};
        \node (Lambda) at ([shift={+(2,0)}]yprime) {$A$};
        \node (Lambdasim) at ([shift={+(2,0)}]Lambda) {$p_\Lambda(A)$};
        \node (U) at ([shift={+(0,-1)}]yprime) {$U$};
        \node (W) at ([shift={+(2,0)}]U) {$W$};
        \node (Gammasim) at ([shift={+(2,0)}]W) {$p_\Gamma(W)$};
        \draw[->] (yprime)--(U) node[midway,left] {$\pi$};
        \draw[->] (U)--(W) node[midway,above] {$\varphi$};
        \draw[->] (W)--(Lambda) node[midway,left] {$\beta$};
        \draw[->] (Lambda)--(Lambdasim) node[midway,above] {$p_\Lambda$};
        \draw[->] (Lambdasim)--(Gammasim) node[midway,right] {$\pi/\!\!\sim$};
        \draw[->] (W)--(Gammasim) node[midway,above] {$p_\Gamma$};
        \draw[->] (U) to[out=330,in=210] node[midway,below] {$p_\Gamma$} (Gammasim);
        \draw[->] (yprime) to[out=30,in=150] node[midway,above] {$p_\Lambda$} (Lambdasim);
        \draw[->,dashed] (Lambda)--(yprime);

        \node (yprimep) at ([shift={+(2,0)}]Lambdasim) {$y'$};
        \node (Lambdap) at ([shift={+(2,0)}]yprimep) {$\alpha(v)$};
        \node (Lambdasimp) at ([shift={+(2,0)}]Lambdap) {$p_\Lambda(\alpha(v))$};
        \node (Up) at ([shift={+(0,-1)}]yprimep) {$v_i$};
        \node (Wp) at ([shift={+(2,0)}]Up) {$v$};
        \node (Gammasimp) at ([shift={+(2,0)}]Wp) {$p_\Gamma(v)$};
        \draw[|->] (yprimep)--(Up) node[midway,left] {$\pi$};
        \draw[|->] (Up)--(Wp) node[midway,above] {$\varphi$};
        \draw[|->] (Wp)--(Lambdap) node[midway,left] {$\beta$};
        \draw[|->] (Lambdap)--(Lambdasimp) node[midway,above] {$p_\Lambda$};
        \draw[|->] (Lambdasimp)--(Gammasimp) node[midway,right] {$\pi/\!\!\sim$};
        \draw[|->] (Wp)--(Gammasimp) node[midway,above] {$p_\Gamma$};
        \draw[|->] (Up) to[out=330,in=210] node[midway,below] {$p_\Gamma$} (Gammasimp);
        \draw[|->] (yprimep) to[out=30,in=150] node[midway,above] {$p_\Lambda$} (Lambdasimp);
        \end{tikzpicture}
        \]
        where all sets in the left diagram are open in $\Gamma$, $\Lambda$, $\Gamma/\!\!\sim$ or $\Lambda/\!\!\sim$ accordingly, and all arrows are homeomorphisms between their sources and ranges. If we define $\psi\colon A\to B$ to be the unique function which, when placed in the dashed space above, makes the resulting diagram commutative, then the commutativity of this diagram means precisely that $\varphi$ and $\psi$ satisfy the necessary conditions as in the definition of conjugate sections.
        
        The idea is now to perform this procedure locally, with the usage of partitions of unity, which will slightly modify the diagram above.
        
        We thus perform the procedure above for all $v\in\supp(\beta)\cap V$ and use its compactness in order to find a finite collection of tuples of sets $(U_j,W_j,A_j,B_j)$, $j=1,\ldots,N$, which makes the analogous diagram as the one above commutative, and such that
        \begin{itemize}
            \item $\supp\beta\cap V\subseteq\bigcup_{j=1}^N W_j$;
            \item For each $j$, $W_j\subseteq V$ and $U_j\subseteq V_i$ for some $i$.
        \end{itemize}
        
        Let $\left\{f_j:j=1,\ldots,N\right\}$ be a partition of unity of $\supp(\beta)\cap V$ subordinate to $W_1,\ldots,W_N$. We may break down $\beta$ as
        $\beta=\sum_{j=1}^N f_j\beta$, where each section $f_j\beta$ is supported on $W_j$, i.e., $f_j\beta\in C_c(W_j,\pi)$.
        
        Given $j$, Let $f_jA_j=(f_j\beta_j)(W_j)$ and $f_jB_j=\left\{f_j(\varphi(\pi(b)))b:b\in B_j\right\}$. Then $f_jA_j$ and $f_jB_j$ are also open in $\Lambda$, and the following diagram (with solid arrows) commutes:
        \[\begin{tikzpicture}
        \node (yprime) at (0,0) {$f_jB_j$};
        \node (Lambda) at ([shift={+(2,0)}]yprime) {$f_jA_j$};
        \node (Lambdasim) at ([shift={+(2,0)}]Lambda) {$p_\Lambda(f_jA_j)$};
        \node (U) at ([shift={+(0,-1)}]yprime) {$U_j$};
        \node (W) at ([shift={+(2,0)}]U) {$W_j$};
        \node (Gammasim) at ([shift={+(2,0)}]W) {$p_\Gamma(W_j)$};
        \draw[->] (yprime)--(U) node[midway,left] {$\pi$};
        \draw[->] (U)--(W) node[midway,above] {$\varphi$};
        \draw[->] (W)--(Lambda) node[midway,left] {$\beta$};
        \draw[->] (Lambda)--(Lambdasim) node[midway,above] {$p_\Lambda$};
        \draw[->] (Lambdasim)--(Gammasim) node[midway,right] {$\pi/\!\!\sim$};
        \draw[->] (W)--(Gammasim) node[midway,above] {$p_\Gamma$};
        \draw[->] (U) to[out=330,in=210] node[midway,below] {$p_\Gamma$} (Gammasim);
        \draw[->] (yprime) to[out=30,in=150] node[midway,above] {$p_\Lambda$} (Lambdasim);
        \draw[->,dashed] (Lambda)--(yprime);
        \end{tikzpicture}
        \]
        where all sets are open in $\Gamma$, $\Lambda$, $\Gamma/\!\!\sim$ or $\Lambda/\!\!\sim$, and all arrows are homeomorphisms. Let $\psi_j\colon f_j A_j\to f_jB_j$ be the homeomorphism associated to the dashed arrow above making the diagram commute. Then $\varphi_j$ and $\psi_j$ satisfy the required conditions to define the conjugate section $\psi_j(f_j\beta)\phi_j$. This conjugate section, in turn, will belong to some $C_c(V_i,\pi)$, (namely, just cchoose $i$ such that $U_j\subseteq V_i$).
        
        We may thus rewrite
        \[\sum_{i=1}^n\alpha_i+\beta=\sum_{i=1}^n\alpha_i+\sum_{j=1}^N\psi_j(f_j\beta)\varphi_j+\sum_{j=1}^n\left[(f_j\beta)-(\psi_j(f_j\beta)\varphi_j)\right].\]
        The last term of the right-hand side belongs to $\langle\mathbf{K}\rangle$, and in particular to $\ker T$. The remainder terms of the right-hand sides are may be rewritten as a sum, with at most $n$ elements, of elements of $C_c(V_i,\pi)$, $i=1,\ldots,n$. Namely, for each $j$ choose $I(j)$ such that $U_j\subseteq V_{I(j)}$. Then
        \[\sum_{i=1}^n\alpha_i +\sum_{j=1}^N\psi_j(f_j\beta)\varphi_j=\sum_{i=1}^n\left(\alpha_i+\sum_{j:I(j)=i}\psi_j(f_j\beta)\varphi_j\right)\]
        Since this also belong to $\ker(T)$, the induction hypothesis implies that it belogns to $\langle\mathbf{K}\rangle$, so we conclude that $\sum_{i=1}^n\alpha_i+\beta\in\langle\mathbf{K}\rangle$, as desired.
    \end{itemize}
\end{proof}

We may now apply the previous theorems to obtain a far-reaching generalization of \cite[Theorem 5.10]{arxiv1804.00396}. First a matter of notation: if $S$ is a discrete inverse semigroupoid with a $\land$-preaction on an algebra $A$, and $s\in S$ and $a\in\dom(\theta_s)$ we define the element $\delta_sa$ of the naïve crossed product $S\bigstar A$ as the function which takes $s$ to $a$ and all other elements of $S$ to $0$.

We may adapt the notion of crossed product of \cite{MR2559043}, and call the ``classical'' crossed product of $S$ and $A$ the quotient of $S\bigstar A$ by the ideal generated by terms of the form $\delta_sa-\delta_ta$, where $a\in\dom(\theta_s)\cap\dom(\theta_t)$ and $s\leq t$.

\begin{corollary}\label{cor:mainestresult}
    Let $S$ be a discrete inverse semigroupoid, $\mathcal{G}$ be an ample groupoid, $R$ a discrete unital ring, $A$ a discrete $R$-algebra, and $\theta\colon\mathcal{S}\curvearrowright\mathcal{G}$ a continuous, associative and open $\land$-preaction of $\mathcal{S}$ on $\mathcal{G}$.
    
    Consider the semidirect product semigroupoid $S\ltimes\mathcal{G}$, and let $\mathscr{G}(S\ltimes\mathcal{G})$ be its \emph{initial groupoid}, i.e., the quotient of $S\ltimes\mathcal{G}$ by the relation
    \[(s_1,g_1)\sim(s_2,g_2)\iff g_1=g_2\text{ and there exists }u\leq s_1,s_2\text{ such that }g_1\in\dom(\theta_u).\]
    (This is also called the \emph{groupoid of germs} of the $\land$-preaction $\theta$.)
    
    Let $\Theta$ be the $\land$-preaction of $S$ on $A\mathcal{G}$ induced by $\theta$: given $s\in S$, set
    \[\dom(\Theta_s)=\left\{f\in A\mathcal{G}:f=0\text{ outside } \dom(\theta_s)\right\},\]
    and define $\Theta_s(f)=f\circ\theta_s^{-1}$ on $\ran(\theta_s)$ and $0$ everywhere else.
    
    Then $A(\mathscr{G}(S\ltimes\mathcal{G}))$ is isomorphic to the quotient of $S\bigstar(A\mathcal{G})$ by the ideal generated by sections $\delta_sa-\delta_ta$, where $a\in\dom(\Theta_s)\cap\dom(\Theta_t)$ and $s\leq t$.
\end{corollary}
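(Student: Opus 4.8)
The plan is to obtain the isomorphism by concatenating Theorems~\ref{thm:isomorphism.of.naive.crossed.product}, \ref{thm:quotients} and \ref{thm:kernelquotients}. First I would realise $A\mathcal{G}$ as the sectional algebra $\mathcal{A}(\pi)$ of the coordinate-projection bundle $\pi\colon A\times\mathcal{G}\to\mathcal{G}$ (as in Subsection~\ref{subsectiion:semigroupoidalgebra}) and lift $\theta$ to a $\land$-preaction of $S$ on $\pi$ by letting $S$ act on $A\times\mathcal{G}$ through $\mathrm{id}_A\times\theta$. This satisfies Definition~\ref{def:landpreactiononRbundle} and inherits continuity, openness and associativity from $\theta$; moreover $\mathcal{S}\ltimes\Gamma=\mathcal{S}\ltimes\mathcal{G}$ is automatically open in $\mathcal{S}\times\mathcal{G}$ since $\mathcal{S}$ is discrete, and $\pi$ (hence $\mathcal{S}\ltimes\pi$) is a local homeomorphism because $A$ is discrete. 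One checks that the induced $\land$-preaction $\Theta$ of $S$ on $\mathcal{A}(\pi)=A\mathcal{G}$ is exactly the one in the statement. Applying case~(I) of Theorem~\ref{thm:isomorphism.of.naive.crossed.product} gives an algebra isomorphism $\Psi\colon S\bigstar(A\mathcal{G})\to\mathcal{A}(\mathcal{S}\ltimes\pi)$; and a direct computation of products shows that $\mathcal{S}\ltimes\pi$ is nothing but the coordinate-projection bundle $A\times(S\ltimes\mathcal{G})\to S\ltimes\mathcal{G}$, so $\mathcal{A}(\mathcal{S}\ltimes\pi)=A(S\ltimes\mathcal{G})$. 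Explicitly, $\Psi(\delta_s a)$ is the section supported on $\{s\}\times\supp(a)$ given by $(s,\gamma)\mapsto\bigl(a(\gamma),(s,\gamma)\bigr)$.

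Next I would promote the germ relation to a bundle congruence on $\mathcal{S}\ltimes\pi$. Let $\sim_\Gamma$ denote the relation of the statement on $S\ltimes\mathcal{G}$; by the theory of \cite{arxiv1902.09375} it is an open rigid congruence and the quotient $\mathscr{G}(S\ltimes\mathcal{G})$ is again an ample groupoid. Define $\sim_\Lambda$ on $A\times(S\ltimes\mathcal{G})$ by $(a_1,x_1)\sim_\Lambda(a_2,x_2)$ iff $a_1=a_2$ and $x_1\sim_\Gamma x_2$. Then $(\sim_\Gamma,\sim_\Lambda)$ is a bundle congruence in the sense of Definition~\ref{def:bundlecongruence}, it is locally trivial (because $\sim_\Gamma$ is), and the zero set is $\sim_\Lambda$-saturated by construction. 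The quotient bundle $(\mathcal{S}\ltimes\pi)/\!\!\sim$ is the coordinate-projection bundle $A\times\mathscr{G}(S\ltimes\mathcal{G})\to\mathscr{G}(S\ltimes\mathcal{G})$, so $\mathcal{A}((\mathcal{S}\ltimes\pi)/\!\!\sim)=A(\mathscr{G}(S\ltimes\mathcal{G}))$. By Theorems~\ref{thm:quotients} and \ref{thm:kernelquotients}, the induced map $T\colon A(S\ltimes\mathcal{G})\to A(\mathscr{G}(S\ltimes\mathcal{G}))$ is a surjective algebra homomorphism whose kernel is the additive group $\langle\mathbf{K}\rangle$ generated by all differences $\alpha-\psi\alpha\varphi$ of a section and a conjugate of it.

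It then remains to transport $\langle\mathbf{K}\rangle$ across $\Psi$ and identify it with the ideal $N$ of $S\bigstar(A\mathcal{G})$ generated by the elements $\delta_s a-\delta_t a$ with $s\leq t$ and $a\in\dom(\Theta_s)\cap\dom(\Theta_t)$. Both $\Psi^{-1}(\langle\mathbf{K}\rangle)$ (being $\ker(T\circ\Psi)$) and $N$ are two-sided ideals, so it suffices to compare generators. For $N\subseteq\Psi^{-1}(\langle\mathbf{K}\rangle)$: if $s\leq t$ and $\supp(a)\subseteq\dom(\theta_s)\cap\dom(\theta_t)$, decompose $a$ into finitely many pieces supported on open bisections and observe that for each $\gamma\in\supp(a)$ one has $(s,\gamma)\sim_\Gamma(t,\gamma)$ via $u=s$; then $\Psi(\delta_s a-\delta_t a)$ is precisely $\alpha-\psi\alpha\varphi$ with $\alpha=\Psi(\delta_s a)$, $\varphi(t,\gamma)=(s,\gamma)$ and $\psi$ the identity on the $A$-coordinate. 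For the reverse inclusion, take $\alpha-\psi\alpha\varphi\in\mathbf{K}$: since $\mathcal{S}$ is discrete every open bisection of $S\ltimes\mathcal{G}$ lies in a single slice $\{s\}\times\mathcal{G}$, so $\alpha=\Psi(\delta_s a)$ with $a\in\dom(\Theta_s)$; rigidity of $\sim_\Gamma$ and the fact that it only identifies pairs with equal $\mathcal{G}$-coordinate force $\varphi$ to be the identity on the $\mathcal{G}$-coordinate, so $\psi\alpha\varphi=\Psi(\delta_t a)$ for a single $t$, and $W\subseteq\{t\}\times\dom(\theta_t)$ gives $a\in\dom(\Theta_t)$. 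For each $\gamma\in\supp(a)$ there is $u(\gamma)\leq s,t$ with $\gamma\in\dom(\theta_{u(\gamma)})$; covering the compact $\supp(a)$ by finitely many such open sets $\dom(\theta_{u_i})$ and taking a subordinate partition of unity, write $a=\sum_i a_i$ with $a_i\in\dom(\Theta_{u_i})\cap\dom(\Theta_s)\cap\dom(\Theta_t)$ and $u_i\leq s,t$, so that $\delta_s a-\delta_t a=\sum_i\bigl((\delta_s a_i-\delta_{u_i}a_i)+(\delta_{u_i}a_i-\delta_t a_i)\bigr)\in N$. Hence $\Psi^{-1}(\langle\mathbf{K}\rangle)=N$, and $\Psi$ descends to the desired isomorphism $S\bigstar(A\mathcal{G})/N\cong A(\mathscr{G}(S\ltimes\mathcal{G}))$.

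The main obstacle is this last matching step: because $\theta$ is only a $\land$-preaction, neither $s\leq t$ nor monotonicity of domains is available, and the partition-of-unity refinement through common lower bounds $u_i\leq s,t$ is what bridges the gap; the delicate bookkeeping is to confirm that each refined piece $a_i$ simultaneously lies in $\dom(\Theta_s)$, $\dom(\Theta_t)$ and $\dom(\Theta_{u_i})$ so that the corresponding differences are genuine generators of $N$. The remaining verifications—that $\mathrm{id}_A\times\theta$ fulfils Definition~\ref{def:landpreactiononRbundle}, that $(\sim_\Gamma,\sim_\Lambda)$ is a bundle congruence, and that $\mathcal{S}\ltimes\pi$ and its quotient are the stated coordinate-projection bundles—are routine.
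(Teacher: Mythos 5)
Your proposal is correct and follows essentially the same route as the paper: realize $A\mathcal{G}$ as $\mathcal{A}(\pi)$, identify $\mathcal{S}\ltimes\pi$ with the coordinate-projection bundle over $S\ltimes\mathcal{G}$ and apply Theorem \ref{thm:isomorphism.of.naive.crossed.product}, promote the germ relation to a bundle congruence and use Theorems \ref{thm:quotients} and \ref{thm:kernelquotients}, then match $\ker T$ with the ideal generated by the $\delta_s a-\delta_t a$ by refining supports through common lower bounds $u_i\leq s,t$. Your use of partitions of unity in place of the paper's disjoint compact-open refinement (with locally constant sections, using discreteness of $A$) is only a cosmetic difference, and your implicit reduction of the kernel generators to slices $\left\{s\right\}\times V$ is the same reduction the paper makes via Lemma \ref{lem:sectionalalgebraisgeneratedbybasis}.
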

\begin{proof}
    Consider the bundles $\pi\colon A\times\mathcal{G}\to\mathcal{G}$ and $\xi\colon A\times(S\ltimes\mathcal{G})\to S\ltimes\mathcal{G}$ given by the obvious coordinate projections.
    
    The $\land$-preaction of $S$ on $\mathcal{G}$ ``extends'' naturally to a $\land$-preaction of $S$ on $A\times\mathcal{G}$, also denoted by $\theta$, given by $\theta_s(a,g)=(a,\theta_s(g))$ whenever the right-hand side makes sense.
    
    In this manner, we may identify $A\times(S\ltimes\mathcal{G})$ with $S\ltimes(A\times\mathcal{G})$, and $\xi$ with $S\ltimes\pi$. We thus obtain, from Theorem \ref{thm:isomorphism.of.naive.crossed.product}, an explicit isomorphism
    \[S\bigstar A\mathcal{G}\cong \mathcal{A}(\xi)\]
    
    Now we need to take care of quotients. We already have a relation $\sim$ on $\mathcal{S}\ltimes\mathcal{G}$, so consider the associated relation (again denoted $\sim$) on $\mathcal{S}\ltimes(A\times\mathcal{G})$ as
    \[(s_1,a_1,g_1)\sim(s_2,a_2,g_2)\iff a_1=a_2\text{ and }(s_1,g_1)\sim(s_2,g_2)\]
    The only non-trivial property of Definition \ref{def:bundlecongruence} is \ref{def:bundlecongruenceroll}, which is proven as follows: If $x=(s_1,a_1,g_1)$ and $y=(s_2,a_2,g_2)$ have equivalent images under $\xi$, then $(s_1,g_1)\sim(s_2,g_2)$, so $y'=(s_1,a_2,g_1)$ has the same image as $x$ under $\xi$, and it is equivalent to $y$.
    
    Then $S\ltimes(A\times\mathcal{G})/\!\!\sim$ is isomorphic, in the obvious manner, to $A\times(S\ltimes G)/\!\!\sim=A\times\mathscr{G}(S\ltimes\mathcal{G})$, and the quotient bundle $\xi/\!\!\sim\colon A\times\mathscr{G}(S\ltimes\mathcal{G})\to\mathscr{G}(S\ltimes\mathcal{G})$ is the coordinate projection. By Theorem \ref{thm:quotients}, we obtain an explicit surjective homomorphism
    \[T\colon S\bigstar A\mathcal{G}\cong\mathcal{A}(\xi)\to \mathcal{A}(\xi/\!\!\sim)=A(\mathscr{G}(S\ltimes\mathcal{G}).\]
    
    We use the explicit description of $\ker T$ given by Theorem \ref{thm:kernelquotients}: Two basic open sets $V'$ and $W'$ of $S\ltimes\mathcal{G}$ will have the forms
    \[V'=\left\{s\right\}\times V\quad\text{and}\quad W'=\left\{t\right\}\times W\]
    for certain $s,t$ and compact-open Hausdorff $V\subseteq\dom(\theta_s)$, $W\subseteq\dom(\theta_t)$.
    
    A homeomorphism $\varphi'$ from $W'$ to $V'$ is of the form $(t,g)\mapsto (s,\varphi(g))$ for some homeomorphism $\varphi\colon W\to V$. However, if $\varphi'$ preserves $\sim$-classes then $\varphi$ is actually the identify, which means that $W=V$, and for every $g\in V$ there is $u_g\leq s,t$ with $g$ in its domain. As $\mathcal{G}$ is ample, we may divide $V$ into finitely many disjoint compact-open $V_1,\ldots,V_n$, an find $u_1,\ldots,u_n$ smaller than $s$ and $t$, with $V_i\subseteq\dom(\theta(u_i))$. Moreover, $A$ is discrete, so we may divide each $V_i$ further and assume that the section $\alpha$, seen as a function from $S\ltimes\mathcal{G}$ to $A$, is constant $a_i$ on $V_i$ and zero everywhere else.
    
    Similarly, the function $\psi$, defined on appropriate domains, is of the form $\psi(r,s,g)=(r,t,g)$. So $\psi\alpha\varphi(t,g)=(a_i,t,g)$ on $\left\{t\right\}\times V_i$.
    
    We now see $\alpha$ and $\psi\alpha\varphi$ as elements of $S\bigstar A\mathcal{G}$, i.e., as functions from $S$ to $A\mathcal{G}$, as in Theorem \ref{thm:isomorphism.of.naive.crossed.product}. Under this realization, it follows that $U_i\subseteq\dom(\theta_{u_i})$, so $a_i1_{U_i}\in\dom(\Theta_{u_i})$ (the function $a_i1_{U_i}$, from $\mathcal{G}$ to $A$, takes $U_i$ to $a_i$ and $\mathcal{G}\setminus U_i$ to $0$). Therefore we obtain
    \[\alpha-\psi\alpha\varphi=\sum_{i=1}^n \delta_s(a_i1_{U_i})-\delta_t(a_i1_{U_i})=\left(\sum_{i=1}^n\delta_s(a_i1_{U_i})-\delta_{u_i}(a_i1_{U_i})\right)+\left(\sum_{i=1}^n\delta_{u_i}(a_i1_{U_i})-\delta_t(a_i1_{U_i})\right).\]
    Since $u_i\leq s,t$ then each term in each sum above belongs to the desired generating set.\qedhere
\end{proof}

\bibliographystyle{amsplain}
\bibliography{library}

\end{document}